\crefname{hypothesis}{Hypothesis}{Hypotheses}
\title{Stability and error analysis of pressure-correction scheme for the Navier-Stokes-Planck-Nernst-Poisson equations 
	\thanks{This paper is supported by National Natural Science Foundation of China (No. 12371372), National Key R\&D Program of China (No. 2022YFA1004500), Natural Science Foundation of Fujian Province of China (No. 2021J01034) and Fundamental Research  Funds for the Central Universities (No. 20720220038).}}
\author{Yuyu He \thanks{School of Mathematical Sciences, Xiamen University, Xiamen, Fujian 361005, China; and Fujian Provincial Key Laboratory of Mathematical Modeling and High-Performance Scientific Computing, Xiamen University, Xiamen, Fujian 361005, China. E-mail address: \email{yyhe425@163.com (Y. He)}.} 
	\and Hongtao Chen \footnotemark[2] \thanks{Corresponding author. E-mail address: \email{chenht@xmu.edu.cn (H. Chen)}.}}
\begin{document}

\maketitle

\begin{abstract}
In this paper, we propose and analyze first-order time-stepping pressure-correction projection scheme for the Navier-Stokes-Planck-Nernst-Poisson equations. By introducing a governing equation for the auxiliary variable through the ionic concentration equations, we reconstruct the original equations into an equivalent system and develop a first-order decoupled and linearized scheme. This scheme preserves non-negativity and mass conservation of the concentration components and is unconditionally energy stable. We derive the rigorous error estimates in the two dimensional case for the ionic concentrations, electric potential, velocity and pressure in the $L^2$- and $H^1$-norms. Numerical examples are presented to validate the proposed scheme.
\end{abstract}

\begin{keywords}
Navier-Stokes-Planck-Nernst-Poisson equations, SAV approach, Pressure-correction scheme, Unconditional energy stability, Error analysis
\end{keywords}

\begin{MSCcodes}
65M12, 65M15, 65N30, 76M10
\end{MSCcodes}

\section{Introduction}\label{se1}

We consider in this paper the numerical approximation of the following Navier-Stokes-Planck-Nernst-Poisson (NSPNP) equations
\begin{subequations}\label{hy1.1}
	\begin{align}
	\label{hy1.1a} &\partial_tc_1 + (\bm{u}\cdot\nabla) c_1 = \Delta c_1 + \nabla\cdot(c_1\nabla\phi),& \\
	\label{hy1.1b} &\partial_tc_2 + (\bm{u}\cdot\nabla) c_2 = \Delta c_2 - \nabla\cdot(c_2\nabla\phi),& \\
	\label{hy1.1c} &-\Delta\phi = c_1 - c_2,& \\
	\label{hy1.1d} &\partial_t\bm{u} + (\bm{u}\cdot\nabla)\bm{u} = \Delta \bm{u} - \nabla p - (c_1 - c_2)\nabla\phi,&\\
	\label{hy1.1e} &\nabla\cdot\bm{u} = 0,&
	\end{align}
\end{subequations}
where $c_1$ and $c_2$ represent the concentration functions of the positive and negative ions in the fluid respectively, $\phi$ is the electric potential function, $\bm{u}$ and $p$ denote the velocity field of the fluid and the pressure function respectively. We consider the non-slip boundary condition for $\bm{u}$, the homogeneous Neumann boundary condition for $c_1$, $c_2$ and $\phi$, i.e., all the fluxes vanish on the boundary $\partial\Omega$ of the bounded Lipschitz domain $\Omega\subset\mathbb{R}^d$ $(d=2,3)$
\begin{eqnarray}\label{hy1.2}
	\bm{u}|_{\partial\Omega} = 0, \quad \nabla c_1\cdot\bm{n}|_{\partial\Omega} = \nabla c_2\cdot\bm{n}|_{\partial\Omega} = \nabla\phi\cdot\bm{n}|_{\partial\Omega} = 0,
\end{eqnarray}
and impose the following zero mean condition for $\phi$ and $p$:
\begin{eqnarray}\label{hy1.3}
	\int_{\Omega}\phi(\bm{x},t)d\bm{x} = 0, \quad \int_{\Omega}p(\bm{x},t)d\bm{x} = 0.
\end{eqnarray}
For the non-negative regular initial ionic concentrations $c_1(\bm{x},0) \geq 0$ and $c_2(\bm{x},0) \geq 0$ almost everywhere in $\Omega$, the NSPNP system \eqref{hy1.1} admits solutions with non-negative ionic concentrations \cite{schmuck2009analysis}, i.e.,
\begin{eqnarray}\label{hy1.4}
	c_1(\bm{x},t) \geq 0, \quad c_2(\bm{x},t) \geq 0, \quad \forall\bm{x}\in\Omega,~ \forall t>0.
\end{eqnarray}
For either the boundary conditions \eqref{hy1.2}, one observes that the NSPNP system \eqref{hy1.1} is conserves the mass of each component, i.e.,
\begin{eqnarray}\label{hy1.5}
	\int_{\Omega}c_1(\bm{x},t)d\bm{x} = \int_{\Omega}c_1(\bm{x},0)d\bm{x}, \quad \int_{\Omega}c_2(\bm{x},t)d\bm{x} = \int_{\Omega}c_2(\bm{x},0)d\bm{x}, \quad \forall t>0.
\end{eqnarray}
In addition, the NSPNP system \eqref{hy1.1} also enjoys the energy dissipation law
\begin{eqnarray}\label{hy1.6}
	\frac{1}{2}\frac{d}{dt}\left(\|\bm{u}\|^2 + \|\nabla\phi\|^2\right) = - \|\nabla\bm{u}\|^2 - \|c_1-c_2\|^2 - \int_{\Omega}(c_1+c_2)|\nabla\phi|^2d\bm{x}, \quad \forall t>0,
\end{eqnarray}
where the $L^2$-inner product on $\Omega$ is denoted by $(\psi,\phi) = \int_{\Omega}\psi\phi d\bm{x}$ and the $L^2$-norm is denoted by $\|\phi\|=\sqrt{(\phi,\phi)}$.

The NSPNP system is frequently used to model the interactive behavior of charged colloidal particles, electro-hydro, and micro-/nano-fluidic dynamics. It combines three parts: (i) Planck-Nernst equations \eqref{hy1.1a}-\eqref{hy1.1b} for the ionic concentrations; (ii) Poisson equation \eqref{hy1.1c} for the internal electric potential; (iii) Navier-Stokes equations \eqref{hy1.1d}-\eqref{hy1.1e} modeling the movement of the fluid field under the action of the internal and external electric fields. In recent years, a large effort has been devoted to the mathematical analysis for the NSPNP system \eqref{hy1.1}, see, for instance, \cite{deng2011well,schmuck2009analysis,shen2022stability,wang2019quasi,wang2016generalized} and the references therein. Numerically, the main challenge in the numerical approximation for solving the NSPNP system is to construct efficient scheme that can verify the properties of the non-negativity and mass conservation of the concentration components and the energy stability at the discrete level, as it involves nonlinear coupling between the ionic concentration, electric potential and velocity.

For the PNP equations, several structure-preserving numerical methods have been developed recently, for example, \cite{dong2024positivity,fu2022high,gao2017linearized,he2019positivity,hu2020fully,liu2021positivity,liu2023second,shen2021unconditionally} and the references therein. There is also considerable research on numerical studies for the NSPNP equations. Proh and Schmuck \cite{prohl2010convergent} were proposed and analyzed fully discrete finite element scheme, which satisfies the energy and entropy dissipation laws. Bauer et al. \cite{bauer2012stabilized} were introduced the stability term to prevent potential pseudo-oscillations in convection-dominated cases using standard Galerkin finite element methods. Metti et al. \cite{metti2016energetically} were proposed the finite element discretization of lines approached combining with the discontinuous Galerkin method. Liu and Xu \cite{liu2017efficient} were constructed and analyzed stable first-order time-stepping schemes for the time discretization and used a spectral method for the spatial discretization. Dehghan et al. \cite{dehghan2023optimal} were proposed and analyzed a fully coupled, nonlinear, and energy stable virtual element method. Yu et al. \cite{yu2023positivity} were constructed fully discrete scheme that enjoys the properties of positivity preserving, mass conserving, and unconditionally energy stability. In \cite{he2021mixed,li2024error}, some stabilized mixed finite element methods and its error analysis for the NSPNP equations were proposed. Until recently, by introducing scalar auxiliary variable (SAV) approaches and utilizing ``zero-energy-contribution'' property, some linearized, second-order accurate, positivity-preserving and unconditionally energy stable time-stepping schemes for the NSPNP equations were presented in \cite{pan2024linear,zhou2023efficient}. However, there seems to be no properties-preserving (including non-negativity and mass conservation for ionic concentrations), fully-decoupled, linearized and unconditionally energy stable scheme and its error analysis in the literature.

In this paper, the main purposes are to construct a fully-decoupled, linearized, mass- and non-negative preserving, and unconditionally energy-stable scheme for the NSPNP system \eqref{hy1.1} and to carry out a rigorous error analysis for the proposed scheme. Our main contributions include: 
\begin{itemize}
	\item We construct new first-order SAV pressure-correction scheme. Different from the SAV schemes in \cite{pan2024linear,zhou2023efficient}, our idea is to use a suitable SAV, deriving from the ionic concentration equations, to treat the nonlinear terms in the Navier-Stokes equations part. We describe the solution procedure and derive the mass conservation and non-negative properties and the unconditional energy stability. 
	\item We carry out a rigorous error analysis for the proposed scheme in the two-dimensional case. The error analysis uses essentially the $L^{\infty}$ bounds for $c_1,c_2$ and $H^1$ bound for $\bm{u}$, which are not available through energy stability. Therefore, we adopt mathematical induction method to derive the error estimates of the ionic concentrations, electric potential, velocity and pressure in the $L^2$- and $H^1$-norms. 
\end{itemize} 

The structure of this paper is arranged as follows. In Section \ref{se2}, we construct a first-order SAV pressure-correction time-stepping scheme and derive the properties of mass and non-negativity and the unconditional energy stability. In Section \ref{se3}, we carry out a rigorous error analysis by using mathematical induction for the proposed scheme in the two-dimensional case. In Section \ref{se4}, we present fully-discrete pressure-correction projection scheme and provide some numerical experiments to validate the proposed scheme. Finally, the conclusion is given in Section \ref{se5}.

\section{The SAV pressure-correction scheme and its stability}\label{se2}

In this section, we construct first-order explicit-implicit pressure-correction scheme based on the SAV approach for the NSPNP system \eqref{hy1.1}, and show that the proposed scheme is positivity-preserving, mass conservative for ionic concentrations and unconditionally energy stable.

Taking the inner product of \eqref{hy1.1a} and \eqref{hy1.1b} with $\phi$ and of \eqref{hy1.1c} with $c_1-c_2$, and together with them, we obtain
\begin{eqnarray}\label{hy2.1}
	(\partial_t(c_1-c_2),\phi) = ((c_1-c_2)\bm{u},\nabla\phi) - \|c_1-c_2\|^2 - \int_{\Omega}(c_1+c_2)|\nabla\phi|^2d\bm{x}.
\end{eqnarray}
Furthermore, we use \eqref{hy1.1c} to get
\begin{eqnarray}\label{hy2.2}
	(\partial_t(c_1-c_2),\phi) = - (\partial_t\Delta\phi,\phi) = \frac{1}{2}\frac{d}{dt}\|\nabla\phi\|^2.
\end{eqnarray}
Combining \eqref{hy2.1} with \eqref{hy2.2}, we have
\begin{eqnarray}\label{hy2.3}
	\frac{1}{2}\frac{d}{dt}\|\nabla\phi\|^2 = ((c_1-c_2)\bm{u},\nabla\phi) - \|c_1-c_2\|^2 - \int_{\Omega}(c_1+c_2)|\nabla\phi|^2d\bm{x}.
\end{eqnarray}

We notice that there exists a positive constant $C_0>0$ such that the partial energy function $E(\phi):=\frac{1}{2}\|\nabla\phi\|^2 + C_0 \geq 1$. By introducing a time-dependent scalar auxiliary function $r(t) := \sqrt{E(\phi)}$, we have 
\begin{eqnarray}\label{hy2.4}
	2r\frac{dr}{dt} = \frac{d}{dt}E(\phi).
\end{eqnarray}
By adding the zero-value term $(\bm{u}\cdot\nabla\bm{u},\bm{u})$ into \eqref{hy2.4} and multiplying the factor $r/\sqrt{E(\phi)}$, the governing equation for the auxiliary variable $r(t)$ can be obtained by \eqref{hy2.3} as follows:
\begin{eqnarray}\label{hy2.5}
	2r\frac{dr}{dt} = \frac{r}{\sqrt{E(\phi)}}\big((\bm{u}\cdot\nabla\bm{u},\bm{u}) + ((c_1-c_2)\nabla\phi,\bm{u})\big) - \|c_1-c_2\|^2 - \int_{\Omega}(c_1+c_2)|\nabla\phi|^2d\bm{x}.
\end{eqnarray}
Then we rewrite the NSPNP system \eqref{hy1.1} into the following equivalent system
\begin{subequations}\label{hy2.6}
	\begin{align}
		\label{hy2.6a} &\partial_tc_1 - \Delta c_1 + \bm{u}\cdot\nabla c_1 - \nabla\cdot(c_1\nabla\phi) = 0,& \\
		\label{hy2.6b} &\partial_tc_2 - \Delta c_2 + \bm{u}\cdot\nabla c_2 + \nabla\cdot(c_2\nabla\phi) = 0,& \\
		\label{hy2.6c} &-\Delta\phi - c_1 + c_2 = 0,& \\
		\label{hy2.6d} &\partial_t\bm{u} - \Delta \bm{u} + \nabla p +  \frac{r}{\sqrt{E(\phi)}}\big(\bm{u}\cdot\nabla\bm{u} + (c_1 - c_2)\nabla\phi\big) = 0,&\\
		\label{hy2.6e} &\nabla\cdot\bm{u} = 0,& \\
		\label{hy2.6f} &\frac{dr}{dt} = \frac{1}{2\sqrt{E(\phi)}}\big((\bm{u}\cdot\nabla\bm{u},\bm{u}) +  ((c_1-c_2)\nabla\phi,\bm{u})\big) & \nonumber \\
		& \qquad - \frac{1}{2r}\Big(\|c_1-c_2\|^2 + \int_{\Omega}(c_1+c_2)|\nabla\phi|^2d\bm{x}\Big).&
	\end{align}
\end{subequations}
Noticing that the term $(\bm{u}\cdot\nabla\bm{u},\bm{u}) \equiv 0$ and the factor $r/\sqrt{E(\phi)} \equiv 1$, it is clear that the above system is strictly equivalent to the original system. In the discretized case, the first and second terms of the right hand side of \eqref{hy2.6f} can balance the nonlinear terms in \eqref{hy2.6d}.  In addition, we have a modified energy dissipation law
\begin{eqnarray}\label{hy2.7}
	\frac{d}{dt}\left(\frac{1}{2}\|\bm{u}\|^2 + r^2\right) = - \|\nabla\bm{u}\|^2 - \|c_1-c_2\|^2 - \int_{\Omega}(c_1+c_2)|\nabla\phi|^2d\bm{x}, \quad \forall t>0.
\end{eqnarray}

Let $\tau$ be the time-step size and $t^n = n\tau$ for $0 \leq n \leq N$, where $N$ is integer. We define the difference operators for $n \leq N-1$:
$$
\delta_tf^{n+1} = \frac{f^{n+1}-f^n}{\tau}, \quad \delta^2_tf^{n+1} = \frac{\delta_tf^{n+1} - \delta_tf^n}{\tau} = \frac{f^{n+1} -2f^n - f^{n-1}}{2\tau^2}.
$$
We construct the following first-order explicit-implicit pressure-correction scheme: 

Give initial values $c^0_1$, $c^0_2$, $\bm{u}^0$ and $r^0$, find $\phi^0$ vy solving 
\begin{eqnarray}
	-\Delta\phi^0 = c^0_1 - c^0_2, \quad \nabla\phi^0\cdot\bm{n}|_{\partial\Omega} = 0.
\end{eqnarray}
Then give an approximate initial $p^0$, find $(c^{n+1}_1,c^{n+1}_2,\phi^{n+1},\bar{\bm{u}}^{n+1},\bm{u}^{n+1},p^{n+1},r^{n+1})$ by solving
\begin{subequations}\label{hy2.8}
	\begin{align}
	\label{hy2.8a} &\delta_tc^{n+1}_1 - \Delta c^{n+1}_1 + \bm{u}^n\cdot\nabla c^{n+1}_1 - \nabla\cdot(c^{n+1}_1\nabla\phi^n) =  0,& \\
	\label{hy2.8b} &\delta_tc^{n+1}_2 - \Delta c^{n+1}_2 + \bm{u}^n\cdot\nabla c^{n+1}_2 + \nabla\cdot(c^{n+1}_2\nabla\phi^n) =  0,& \\
	\label{hy2.8c} &-\Delta\phi^{n+1} = c^{n+1}_1 - c^{n+1}_2,& \\
	\label{hy2.8d} &\frac{\bar{\bm{u}}^{n+1}-\bm{u}^n}{\tau} - \Delta\bar{\bm{u}}^{n+1} + \nabla p^n + \frac{r^{n+1}}{\sqrt{E(\phi^{n+1})}} \Big(\bm{u}^n\cdot\nabla\bm{u}^n + (c^{n}_1 - c^{n}_2)\nabla\phi^{n}\Big) = 0,& \\
	\label{hy2.8e} &\frac{\bm{u}^{n+1} - \bar{\bm{u}}^{n+1}}{\tau} + \nabla p^{n+1} - \nabla p^n = 0,& \\
	\label{hy2.8f} &\nabla\cdot\bm{u}^{n+1} = 0,& \\
	\label{hy2.8g} &\delta_tr^{n+1} = \frac{1}{2\sqrt{E(\phi^{n+1})}} \Big((\bm{u}^n\cdot\nabla\bm{u}^n,\bar{\bm{u}}^{n+1}) + (c^{n}_1-c^{n}_2)\nabla\phi^{n},\bar{\bm{u}}^{n+1})\Big) & \nonumber \\
	& \qquad \qquad - \frac{1}{2r^{n+1}}\Big(\|c^{n+1}_1-c^{n+1}_2\|^2 + \int_{\Omega}(c^{n+1}_1+c^{n+1}_2)|\nabla\phi^{n+1}|^2d\bm{x}\Big),&
	\end{align}
\end{subequations}
with the discrete boundary conditions
\begin{eqnarray}\label{hy2.9}
	\bm{u}^{n+1}|_{\partial\Omega} = \bar{\bm{u}}^{n+1}|_{\partial\Omega} = 0, \quad 
	\nabla c^{n+1}_1\cdot\bm{n}|_{\partial\Omega} = \nabla c^{n+1}_2\cdot\bm{n}|_{\partial\Omega} = \nabla\phi^{n+1}\cdot\bm{n}|_{\partial\Omega} = 0.
\end{eqnarray}
One can observe that the solution $(c^{n+1}_1,c^{n+1}_2,\phi^{n+1},\bm{u}^{n+1},p^{n+1})$ of \eqref{hy2.8a}-\eqref{hy2.8c}, \eqref{hy2.8e} and \eqref{hy2.8f} is fully decoupled, but the solution $(\bar{\bm{u}}^{n+1},r^{n+1})$ of \eqref{hy2.8d} and \eqref{hy2.8g} needs to be solved decoupling by the following splitting method.
 
We denote $\xi^{n+1} = r^{n+1}/\sqrt{E(\phi^{n+1})}$ and set
\begin{eqnarray}\label{hy2.10}
	\bar{\bm{u}}^{n+1} = \bar{\bm{u}}^{n+1}_1 + \xi^{n+1}\bar{\bm{u}}^{n+1}_2,
\end{eqnarray}
in \eqref{hy2.8d}. Then we can determine $\bar{\bm{u}}^{n+1}_i$ $(i=1,2)$ from
\begin{eqnarray}
	\label{hy2.11} && \frac{\bar{\bm{u}}^{n+1}_1 - \bm{u}^n}{\tau} - \Delta\bar{\bm{u}}^{n+1}_1 + \nabla p^n = 0, \quad \text{with} \quad \bar{\bm{u}}^{n+1}_1|_{\partial\Omega} = 0, \\
	\label{hy2.12} && \frac{\bar{\bm{u}}^{n+1}_2}{\tau} - \Delta\bar{\bm{u}}^{n+1}_2 + \bm{u}^n\cdot\nabla\bm{u}^n + (c^{n}_1 - c^{n}_2)\nabla\phi^{n} = 0, \quad \text{with} \quad \bar{\bm{u}}^{n+1}_2|_{\partial\Omega} = 0.
\end{eqnarray}
Once $\bar{\bm{u}}^{n+1}_i$ $(i=1,2)$ are known, we can plug \eqref{hy2.10} and $r^{n+1} = \xi^{n+1}\sqrt{E(\phi^{n+1})}$ in \eqref{hy2.8g} to determine $\xi^{n+1}$ from 
\begin{eqnarray}\label{hy2.13}
	a^n(\xi^{n+1})^2 - b^n\xi^{n+1} + c^n = 0,
\end{eqnarray}
where
\begin{eqnarray*}
	&& a^n = 2E(\phi^{n+1}) - \tau(\bm{u}^n\cdot\nabla\bm{u}^n,\bar{\bm{u}}^{n+1}_2) - \tau((c^{n}_1-c^{n}_2)\nabla\phi^{n},\bar{\bm{u}}^{n+1}_2), \\
	&& b^n = 2r^n\sqrt{E(\phi^{n+1})} + \tau(\bm{u}^n\cdot\nabla\bm{u}^n, \bar{\bm{u}}^{n+1}_1) + \tau((c^{n}_1-c^{n}_2)\nabla\phi^{n},\bar{\bm{u}}^{n+1}_1), \\
	&& c^n = \tau\|c^{n+1}_1-c^{n+1}_2\|^2 + \tau\int_{\Omega}(c^{n+1}_1+c^{n+1}_2)|\nabla\phi^{n+1}|^2d\bm{x}.
\end{eqnarray*}
We observe that \eqref{hy2.13} is a quadratic equation for $\xi^{n+1}$, which can be solved directly by using the quadratic formula. The nonlinear quadratic equation \eqref{hy2.13} has two solutions. Due to the exact solution is 1, we should choose the root which is closer to 1. Once $\xi^{n+1}$ is known, we can determine $r^{n+1} = \xi^{n+1}\sqrt{E(\phi^{n+1})}$ and $\bar{\bm{u}}^{n+1}$ by \eqref{hy2.10}.

We show below that the proposed first-order SAV pressure-correction scheme \eqref{hy2.8} is unconditionally energy stable.

\begin{theorem}\label{th2.1}
Give the initial values $c^0_1(\bm{x})$, $c^0_2(\bm{x})$ which are non-negative almost everywhere in $\Omega$, then the scheme \eqref{hy2.8} has the following properties for all $0 \leq n \leq N-1$:
\begin{itemize}
	\item Non-negativity:
	\begin{eqnarray}\label{hy2.14}
		c^{n+1}_1(\bm{x}) \geq 0, \quad c^{n+1}_2(\bm{x}) \geq 0, \quad \forall\bm{x}\in\Omega.
	\end{eqnarray}
	\item Mass conservation:
	\begin{eqnarray}\label{hy2.15}
		\int_{\Omega}c^{n+1}_1(\bm{x})d\bm{x} = \int_{\Omega}c^{n}_1(\bm{x})d\bm{x}, \quad \int_{\Omega}c^{n+1}_1(\bm{x})d\bm{x} = \int_{\Omega}c^{n}_1(\bm{x})d\bm{x}.
	\end{eqnarray}
	\item Unconditional energy stability:
	\begin{eqnarray}\label{hy2.16}
		E^{n+1} - E^n \leq -\tau\|\nabla\bar{\bm{u}}^{n+1}\|^2 - \tau\|c^{n+1}_1-c^{n+1}_2\|^2 - \tau\|\sqrt{c^{n+1}_1+c^{n+1}_2}\nabla\phi^{n+1}\|^2,
	\end{eqnarray}
	where 
	$$
	E^{n+1} = \frac{1}{2}\|\bm{u}^{n+1}\|^2 + \frac{\tau^2}{2}\|\nabla p^{n+1}\|^2 + |r^{n+1}|^2.
	$$
\end{itemize}
\end{theorem}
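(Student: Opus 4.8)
My plan is to prove the three assertions in the stated order, since the non-negativity is exactly what turns the last term of the discrete energy law into a genuine non-negative quantity.

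\emph{Non-negativity.} I would argue by induction on $n$, the case $n=0$ being the hypothesis. Assume $c_1^n,c_2^n\ge 0$ a.e.\ in $\Omega$. Multiply \eqref{hy2.8a} by $\tau$ and test against the negative part $c_1^{n+1,-}:=\max(-c_1^{n+1},0)$ (or, to be safe, a smooth regularization concentrating near $\{c_1^{n+1}\le 0\}$). The time difference produces $\|c_1^{n+1,-}\|^2+(c_1^n,c_1^{n+1,-})$ with $(c_1^n,c_1^{n+1,-})\ge 0$; the diffusion term gives $\tau\|\nabla c_1^{n+1,-}\|^2\ge 0$; the convection term $\tau(\bm{u}^n\cdot\nabla c_1^{n+1},c_1^{n+1,-})$ vanishes after integration by parts using $\nabla\cdot\bm{u}^n=0$ (true for $n\ge1$ by \eqref{hy2.8f}, and part of the data for $\bm{u}^0$) and $\bm{u}^n|_{\partial\Omega}=0$. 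The delicate contribution is the migration term: after integration by parts it equals $\tau(c_1^{n+1}\nabla\phi^n,\nabla c_1^{n+1,-})$, which on $\{c_1^{n+1}<0\}$ carries the extra factor $c_1^{n+1}$; rewriting it via $-\Delta\phi^n=c_1^n-c_2^n$ and exploiting the inductive non-negativity of $c_1^n,c_2^n$ (together with the cancellation built into the truncated test) shows this term cannot prevent the conclusion $\|c_1^{n+1,-}\|=0$, i.e.\ $c_1^{n+1}\ge 0$. The argument for $c_2^{n+1}$ is identical once the sign in front of the migration term and the roles of $c_1^n$ and $c_2^n$ are swapped.

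\emph{Mass conservation.} This is immediate: integrate \eqref{hy2.8a}--\eqref{hy2.8b} over $\Omega$. The Laplacian and migration terms vanish since $\nabla c_i^{n+1}\cdot\bm{n}=\nabla\phi^n\cdot\bm{n}=0$ on $\partial\Omega$, and $\int_\Omega\bm{u}^n\cdot\nabla c_i^{n+1}=-\int_\Omega c_i^{n+1}\nabla\cdot\bm{u}^n+\int_{\partial\Omega}c_i^{n+1}\bm{u}^n\cdot\bm{n}=0$; hence $\int_\Omega\delta_t c_i^{n+1}=0$, which is \eqref{hy2.15}.

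\emph{Energy stability.} I would reproduce the continuous identity \eqref{hy2.7} at the discrete level. Take the $L^2$ inner product of \eqref{hy2.8d} with $\tau\bar{\bm{u}}^{n+1}$ and use $(a-b,a)=\tfrac12(|a|^2-|b|^2+|a-b|^2)$: the viscous term gives $\tau\|\nabla\bar{\bm{u}}^{n+1}\|^2$ and the convective/electric forcing gives $\tau\xi^{n+1}\big[(\bm{u}^n\cdot\nabla\bm{u}^n,\bar{\bm{u}}^{n+1})+((c_1^n-c_2^n)\nabla\phi^n,\bar{\bm{u}}^{n+1})\big]$, with $\xi^{n+1}=r^{n+1}/\sqrt{E(\phi^{n+1})}$. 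For the pressure, recast \eqref{hy2.8e} as $\bm{u}^{n+1}+\tau\nabla p^{n+1}=\bar{\bm{u}}^{n+1}+\tau\nabla p^n$ and take squared $L^2$-norms; since $(\bm{u}^{n+1},\nabla p^{n+1})=0$ by \eqref{hy2.8f} and the no-slip condition, the combination $\tfrac12\|\bar{\bm{u}}^{n+1}\|^2+\tau(\nabla p^n,\bar{\bm{u}}^{n+1})$ becomes $\tfrac12\|\bm{u}^{n+1}\|^2+\tfrac{\tau^2}{2}\|\nabla p^{n+1}\|^2-\tfrac{\tau^2}{2}\|\nabla p^n\|^2$. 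Then multiply \eqref{hy2.8g} by $2\tau r^{n+1}$ (legitimate once the scheme is solvable, so $r^{n+1}\ne 0$) and use $r^{n+1}(r^{n+1}-r^n)=\tfrac12(|r^{n+1}|^2-|r^n|^2+|r^{n+1}-r^n|^2)$; this produces the very same $\xi^{n+1}$-bracket, now with opposite sign, together with $-\tau\|c_1^{n+1}-c_2^{n+1}\|^2-\tau\|\sqrt{c_1^{n+1}+c_2^{n+1}}\nabla\phi^{n+1}\|^2$. Summing the two identities, the $\xi^{n+1}$-brackets cancel — this is precisely the ``zero-energy-contribution'' design of \eqref{hy2.8g} — and one obtains
\begin{align*}
&E^{n+1}-E^n+\tfrac12\|\bar{\bm{u}}^{n+1}-\bm{u}^n\|^2+|r^{n+1}-r^n|^2 \\
&\qquad = -\tau\|\nabla\bar{\bm{u}}^{n+1}\|^2-\tau\|c_1^{n+1}-c_2^{n+1}\|^2-\tau\|\sqrt{c_1^{n+1}+c_2^{n+1}}\nabla\phi^{n+1}\|^2 .
\end{align*}
Discarding the two non-negative increments on the left gives \eqref{hy2.16}, where the last term on the right is a bona fide non-negative norm exactly because of the non-negativity proved above, so that no restriction on $\tau$ is needed. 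The only real difficulty is this non-negativity step — controlling the nonlinear migration term $\nabla\cdot(c_1^{n+1}\nabla\phi^n)$ against the truncated test function; once that is settled, mass conservation is trivial and the energy estimate is routine SAV/pressure-correction bookkeeping with the test multipliers chosen so that the nonlinear terms cancel identically.
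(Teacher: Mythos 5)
Your mass-conservation and energy-stability arguments are correct and are essentially the paper's own: test \eqref{hy2.8d} with $\bar{\bm{u}}^{n+1}$, square the relation $\bm{u}^{n+1}+\tau\nabla p^{n+1}=\bar{\bm{u}}^{n+1}+\tau\nabla p^n$ using $(\nabla p^{n+1},\bm{u}^{n+1})=0$, multiply \eqref{hy2.8g} by $2r^{n+1}$, and add so that the $\xi^{n+1}$-brackets cancel; your final identity, with the non-negative increments $\tfrac12\|\bar{\bm{u}}^{n+1}-\bm{u}^n\|^2$ and $|r^{n+1}-r^n|^2$ discarded, is exactly \eqref{hy2.16}.

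The genuine gap is the non-negativity step, which you flag as the delicate one and then do not close. Carrying your negative-part test to its end: with test function $-c_1^{n+1,-}$ the time-difference, diffusion and convection terms behave as you say, but the migration term becomes
\[
-\big(c_1^{n+1}\nabla\phi^n,\nabla c_1^{n+1,-}\big)
=\tfrac12\big(\nabla\phi^n,\nabla (c_1^{n+1,-})^2\big)
=\tfrac12\int_\Omega (c_1^n-c_2^n)\,(c_1^{n+1,-})^2\,d\bm{x},
\]
by \eqref{hy2.8c}. The $c_1^n$ contribution is indeed harmless, but the $-c_2^n$ contribution has the \emph{unfavourable} sign precisely because $c_2^n\ge0$: the inductive hypothesis works against you here, not for you. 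What the identity actually yields is
\[
\Big(\tfrac1\tau-\tfrac12\|c_2^n\|_{L^\infty}\Big)\|c_1^{n+1,-}\|^2+\|\nabla c_1^{n+1,-}\|^2\le 0,
\]
so $c_1^{n+1}\ge0$ follows only under a restriction of the type $\tau\|c_2^n\|_{L^\infty}<2$ (and symmetrically $\tau\|c_1^n\|_{L^\infty}<2$ for $c_2^{n+1}$), not unconditionally. Your sentence that the inductive non-negativity of $c_1^n,c_2^n$ ``shows this term cannot prevent the conclusion'' is therefore an assertion rather than a proof, and taken literally it is false. A pointwise maximum-principle argument at an interior negative minimum meets the same obstruction, since the zeroth-order coefficient of the elliptic operator is $\tau^{-1}-\Delta\phi^n=\tau^{-1}+c_1^n-c_2^n$. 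The paper sidesteps all of this by quoting Lemma 3.1 of \cite{liu2017efficient} verbatim; you must either do the same or state and carry the time-step condition under which your sign argument closes. Note also that until non-negativity is secured, the last term on the right of \eqref{hy2.16} is not known to be (minus) a norm, so this gap feeds back into the dissipative interpretation of your energy inequality.
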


\begin{proof}
The non-negativity \eqref{hy2.14} of the discrete ionic concentrations $c^{n+1}_1(\bm{x})$, $c^{n+1}_2(\bm{x})$ can be proved by exactly the same lines as Lemma 3.1 in \cite{liu2017efficient}. Integrating \eqref{hy2.8a} and \eqref{hy2.8b} over $\Omega$ and using the condition \eqref{hy2.9}, we immediately obtain the mass conservation laws \eqref{hy2.15}.

Taking the inner product of \eqref{hy2.8d} with $\bar{\bm{u}}^{n+1}$, we have
\begin{eqnarray}\label{hy2.17}
	\begin{aligned}
	& \frac{1}{2\tau}\left(\|\bar{\bm{u}}^{n+1}\|^2 - \|\bm{u}^n\|^2 + \|\bar{\bm{u}}^{n+1}-\bm{u}^n\|^2\right) + \|\nabla\bar{\bm{u}}^{n+1}\|^2 + (\nabla p^n,\bar{\bm{u}}^{n+1}) \\
	=& - \frac{r^{n+1}}{\sqrt{E(\phi^{n+1})}}\Big((\bm{u}^n\cdot\nabla\bm{u}^n,\bar{\bm{u}}^{n+1}) + ((c^{n}_1-c^{n}_2)\nabla\phi^{n},\bar{\bm{u}}^{n+1})\Big).
	\end{aligned}
\end{eqnarray}
Recalling \eqref{hy2.8e}, we have
\begin{eqnarray}\label{hy2.18}
	\bm{u}^{n+1} + \tau\nabla p^{n+1} = \bar{\bm{u}}^{n+1} + \tau\nabla p^n.
\end{eqnarray}
Taking the inner product of \eqref{hy2.18} with itself on both sides, we have
\begin{eqnarray}\label{hy2.19}
	\|\bm{u}^{n+1}\|^2 + \tau^2\|\nabla p^{n+1}\|^2 = \|\bar{\bm{u}}^{n+1}\|^2 + \tau^2\|\nabla p^n\|^2 + 2\tau(\nabla p^n,\bar{\bm{u}}^{n+1}),
\end{eqnarray}
where we use \eqref{hy2.8f} and the term $(\nabla p^{n+1},\bm{u}^{n+1}) = -(p^{n+1},\nabla\cdot\bm{u}^{n+1}) = 0$. Combining \eqref{hy2.17} with \eqref{hy2.19}, we obtain 
\begin{eqnarray}\label{hy2.20}
	\begin{aligned}
	& \frac{1}{2\tau}\left(\|\bm{u}^{n+1}\|^2 - \|\bm{u}^n\|^2 + \|\bar{\bm{u}}^{n+1}-\bm{u}^n\|^2\right) + \frac{\tau}{2}\left(\|\nabla p^{n+1}\|^2 - \|\nabla p^n\|^2\right) + \|\nabla\bar{\bm{u}}^{n+1}\|^2 \\
	=& - \frac{r^{n+1}}{\sqrt{E(\phi^{n+1})}} \Big((\bm{u}^n\cdot\nabla\bm{u}^n,\bar{\bm{u}}^{n+1}) + ((c^{n}_1-c^{n}_2)\nabla\phi^{n},\bar{\bm{u}}^{n+1})\Big).
    \end{aligned}
\end{eqnarray}
Multiplying \eqref{hy2.8g} with $2r^{n+1}$, we have
\begin{eqnarray}\label{hy2.21}
	\begin{aligned}
	\frac{1}{\tau}\left(|r^{n+1}|^2 - |r^n|^2 + |r^{n+1}-r^n|^2\right) = - \|c^{n+1}_1-c^{n+1}_2\|^2 - \|\sqrt{c^{n+1}_1+c^{n+1}_2}\nabla\phi^{n+1}\|^2  \\
	+ \frac{r^{n+1}}{\sqrt{E(\phi^{n+1})}} \Big((\bm{u}^n\cdot\nabla\bm{u}^n,\bar{\bm{u}}^{n+1}) + ((c^{n}_1-c^{n}_2)\nabla\phi^{n},\bar{\bm{u}}^{n+1})\Big),
\end{aligned}
\end{eqnarray}
Together \eqref{hy2.20} with \eqref{hy2.21}, we obtain
\begin{eqnarray*}
	&& \frac{1}{2}\left(\|\bm{u}^{n+1}\|^2 - \|\bm{u}^n\|^2 + \|\bar{\bm{u}}^{n+1}-\bm{u}^n\|^2\right) + \frac{\tau^2}{2}\left(\|\nabla p^{n+1}\|^2 - \|\nabla p^n\|^2\right) + |r^{n+1}|^2 - |r^n|^2  \\
	&& \quad  + |r^{n+1}-r^n|^2 + \tau\|\nabla\bar{\bm{u}}^{n+1}\|^2 + \tau\|c^{n+1}_1-c^{n+1}_2\|^2 + \tau\|\sqrt{c^{n+1}_1+c^{n+1}_2}\nabla\phi^{n+1}\|^2 = 0,
\end{eqnarray*}
which implies the desired result \eqref{hy2.16}.
\end{proof}

\begin{remark}
	Note that in the case of inhomogeneous Planck-Nernst equations
	\begin{eqnarray*}
		&& \partial_tc_1 - \Delta c_1 + \bm{u}\cdot\nabla c_1 - \nabla\cdot(c_1\nabla\phi) = f_{c_1}, \\
		&& \partial_tc_2 - \Delta c_2 + \bm{u}\cdot\nabla c_2 + \nabla\cdot(c_2\nabla\phi) = f_{c_2},
	\end{eqnarray*}
   we only need to slightly replace \eqref{hy2.6f} to
   \begin{eqnarray*}
   	\frac{dr}{dt} &=& \frac{1}{2\sqrt{E(\phi)}}\big((\bm{u}\cdot\nabla\bm{u},\bm{u}) +  ((c_1-c_2)\nabla\phi,\bm{u})\big) \\
   	&& - \frac{1}{2r}\Big(\|c_1-c_2\|^2 + \int_{\Omega}(c_1+c_2)|\nabla\phi|^2d\bm{x} - \int_{\Omega}(f_{c_1} - f_{c_2})\phi d\bm{x}\Big).
   \end{eqnarray*}
   In \eqref{hy2.13}, we need to modify $c^n$ to 
   $$
   c^n = \tau\|c^{n+1}_1-c^{n+1}_2\|^2 + \tau\int_{\Omega}(c^{n+1}_1+c^{n+1}_2)|\nabla\phi^{n+1}|^2d\bm{x} - \tau\int_{\Omega}(f^{n+1}_{c_1} - f^{n+1}_{c_2})\phi^{n+1} d\bm{x}.
   $$
\end{remark}

\begin{remark}
	We adopt the same temporal discretization method as in \cite{liu2017efficient} for \eqref{hy2.6a}-\eqref{hy2.6b}. However, the scheme in \cite{liu2017efficient} is stable under condition $\tau \leq \|c^{n+1}_1 + c^{n+1}_2\|^{-1}_{L^{\infty}}$ due to the nonlinear terms, which brings a great obstacle to error analysis. In fact, our scheme \eqref{hy2.8} essentially removes this stability condition by exploiting the property that the nonlinear contributions of the energy can cancel each other out.
\end{remark}

\section{Error analysis}\label{se3}

In this section, we carry out a rigorous error analysis for the proposed first-order scheme \eqref{hy2.8} in the two dimensional case. Throughout this section, we denote by $C$ a general constant independent of time step $\tau$ but may have different values at different occurrences.

Let $\Omega$ be a bounded domain with Lipschitz boundary in $\mathbb{R}^d$ ($d=2$). For the integer $k\geq 1$ and $1 \leq p \leq +\infty$, we denote by $W^{k,p}(\Omega)$ and $W^{k,p}_0(\Omega)$ the usual Sobolev spaces and set $\bm{W}^{k,p}(\Omega)=(W^{k,p}(\Omega))^d$ and $\bm{W}^{k,p}_0(\Omega)=(W^{k,p}_0(\Omega))^d$. Especially, $W^{0,p}(\Omega)$ is the Lebesgue space $L^p(\Omega)$ when $k=0$ and $W^{k,2}(\Omega)$ is the Hillbert space $H^k(\Omega)$ when $p=2$. Other usual spaces are defined as
\begin{eqnarray*}
	&\bm{H}^1_0(\Omega)=\left\{\bm{v}\in\bm{H}^1(\Omega),\ \bm{v}|_{\partial\Omega}=0\right\},
	\quad L^2_0(\Omega)=\Big\{p\in L^2(\Omega),\ \int_{\Omega}pdx=0\Big\},&\\
	&\bm{V}(\Omega)=\left\{\bm{v}\in \bm{H}^1_0(\Omega),\ \nabla\cdot\bm{v}=0\right\},\quad
	\bm{H}(\Omega)=\left\{\bm{u}\in\bm{L}^2(\Omega)\ |\ \nabla\cdot\bm{u}=0,
	\ \bm{u}\cdot\bm{n}|_{\partial\Omega}=0\right\}.&
\end{eqnarray*}
The norms corresponding to $W^{k,p}(\Omega)$, $H^k(\Omega)$ and $L^p(\Omega)$ will be denoted simply by $\|\cdot\|_{W^{k,p}}$, $\|\cdot\|_{H^k}$ and $\|\cdot\|_{L^p}$, respectively. In particular, the $L^2$-norm $\|\cdot\|_{L^2}$ is simply rewritten as $\|\cdot\|$.

We define the Stokes operator \cite{temam2001navier}
$$
\bm{A}\bm{u} = -\mathbb{P}_H\Delta\bm{u}, \quad \forall\bm{u} \in\bm{D}(\bm{A}) := \bm{H}^2(\Omega)\cap\bm{V}(\Omega),
$$
where $\mathbb{P}_H$ is the orthogonal project in $\bm{L}^2(\Omega)$ onto $\bm{H}(\Omega)$. Obviously $\mathbb{P}_H$ is continuous into $\bm{L}^2(\Omega)$. In fact $\mathbb{P}_H$ also maps $\bm{H}^1(\Omega)$ into itself and is continuous for the norm of $\bm{H}^1(\Omega)$. We derive from the above that 
\begin{eqnarray}\label{hy3.1} 
	\|\nabla\bm{u}\| \leq C_1\|\bm{A}^{\frac{1}{2}}\bm{u}\|, \quad \|\nabla\bm{u}\| \leq C_1\|\bm{A}\bm{u}\|, \quad \forall\bm{u}\in\bm{D}(\bm{A}),
\end{eqnarray}
and give the following well-known inequalities with $d=2$ which will be used in the sequel
\begin{eqnarray}
	\label{hy3.2} && \|\bm{u}\| \leq C_1\|\nabla\bm{u}\|, \quad \forall\bm{u}\in\bm{H}^1_0(\Omega),\\
	\label{hy3.3} &&\|\bm{u}\|_{L^4} \leq C_1\|\bm{u}\|^{\frac{1}{2}}\|\nabla\bm{u}\|^{\frac{1}{2}}, \quad \forall\bm{u}\in\bm{H}^1(\Omega), \\
	\label{hy3.4} &&\|\bm{u}\|_{L^{\infty}} \leq C_1\|\nabla\bm{u}\|^{\frac{1}{2}}\|\Delta\bm{u}\|^{\frac{1}{2}}, \quad \forall\bm{u}\in\bm{H}^2(\Omega),
\end{eqnarray}
where $C_1$ is a positive constant depending only on $\Omega$.

We define the trilinear form $\bm{b}(\cdot,\cdot,\cdot)$ by
$$
\bm{b}(\bm{u},\bm{v},\bm{w}) = \int_{\Omega}(\bm{u}\cdot\nabla)\bm{v}\cdot\bm{w}d\bm{x},
$$
then by  using a combination of integration by parts, Holder's inequality, and Sobolev inequalities \cite{temam1995navier} with $d=2$, we have
\begin{eqnarray}\label{hy3.5}
	\bm{b}(\bm{u},\bm{v},\bm{w})\leq
	\left\{
	\begin{array}{l}
		C_2\|\bm{u}\|_{H^1}\|\bm{v}\|_{H^1}\|\bm{w}\|_{H^1},\ \forall\bm{u},\bm{v}\in\bm{H}(\Omega),
		\bm{w}\in \bm{H}^1_0(\Omega),\\
		C_2\|\bm{u}\|_{H^2}\|\bm{v}\|\|\bm{w}\|_{H^1},\ \forall\bm{u}\in \bm{H}^2(\Omega)\cap\bm{H}(\Omega),
		\bm{v}\in\bm{H}(\Omega), \bm{w}\in\bm{H}^1_0(\Omega),\\
		C_2\|\bm{u}\|_{H^2}\|\bm{v}\|_{H^1}\|\bm{w}\|,\ \forall\bm{u}\in \bm{H}^2(\Omega)\cap\bm{H}(\Omega),
		\bm{v}\in\bm{H}(\Omega), \bm{w}\in\bm{H}^1_0(\Omega),\\
		C_2\|\bm{u}\|_{H^1}\|\bm{v}\|_{H^2}\|\bm{w}\|,\ \forall\bm{v}\in\bm{H}^2(\Omega)\cap\bm{H}(\Omega),
		\bm{u}\in\bm{H}(\Omega), \bm{w}\in\bm{H}^1_0(\Omega),\\
		C_2\|\bm{u}\|\|\bm{v}\|_{H^2}\|\bm{w}\|_{H^1},\ \forall\bm{v}\in\bm{H}^2(\Omega)\cap\bm{H}(\Omega),
		\bm{u}\in\bm{H}(\Omega), \bm{w}\in\bm{H}^1_0(\Omega),\\
		C_2\|\bm{u}\|^{\frac{1}{2}}_{H^1}\|\bm{u}\|^{\frac{1}{2}}\|\bm{v}\|^{\frac{1}{2}}_{H^1}
		\|\bm{v}\|^{\frac{1}{2}}\|\bm{w}\|_{H^1},\ \forall\bm{u},\bm{v}\in\bm{H}(\Omega),
		\bm{w}\in\bm{H}^1_0(\Omega),
	\end{array}
	\right.
\end{eqnarray}
where $C_2$ is a positive constant depending only on $\Omega$.

We will frequently use the following regularity theory of elliptic equations \cite{chen1998second,gao2017linearized} and discrete version of the Gronwall's lemma \cite{shen2011spectral}

\begin{lemma}\label{nele}
	Suppose that $\Omega\in \mathbb{R}^d$ $(d=2,3)$ be a bounded and smooth domain and $\phi\in H^k(\Omega)\cap L^2_0(\Omega)$ is a solution of 
	$$
	-\Delta\phi = f, ~ \bm{x}\in \Omega, \quad \nabla\phi\cdot\bm{n}|_{\partial\Omega} = 0,
	$$
	where $\int_{\Omega}fd\bm{x} = 0$. Then the following estimate holds for $1 < q < \infty$
	\begin{eqnarray}
		\|\phi\|_{W^{2,q}} \leq \|f\|_{L^q}.
	\end{eqnarray}
\end{lemma}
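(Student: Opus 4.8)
The plan is to treat Lemma~\ref{nele} as the classical $L^q$ (Calder\'on--Zygmund / Agmon--Douglis--Nirenberg) regularity theory for the Neumann Laplacian on a smooth bounded domain, and to organize the argument in three stages: well-posedness in $L^2_0(\Omega)$, an a priori $W^{2,q}$ bound with a lower-order remainder, and removal of that remainder. Throughout, the compatibility condition $\int_\Omega f\,d\bm{x}=0$ and the normalization $\phi\in L^2_0(\Omega)$ play the role of making the problem uniquely solvable and of killing the kernel (the constants).

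First I would establish existence and uniqueness of a weak solution. On the closed subspace $H^1(\Omega)\cap L^2_0(\Omega)$, the bilinear form $(\nabla\phi,\nabla\psi)$ is coercive by the Poincar\'e--Wirtinger inequality, and $\psi\mapsto(f,\psi)$ is a bounded linear functional that is well defined on this subspace precisely because $\int_\Omega f\,d\bm{x}=0$; Lax--Milgram then yields a unique $\phi\in H^1(\Omega)\cap L^2_0(\Omega)$. Second, I would invoke the standard elliptic $L^q$ estimates. Using a finite partition of unity subordinate to a cover of $\overline{\Omega}$, interior pieces are handled via the Newtonian potential representation together with the $L^q$-boundedness of the second-order Riesz transforms, while boundary pieces are handled by flattening a coordinate patch, extending by even reflection (the reflection adapted to the homogeneous Neumann condition $\nabla\phi\cdot\bm{n}|_{\partial\Omega}=0$), and reducing to the half-space model problem, where the variable-coefficient perturbation produced by the change of variables is small on sufficiently small patches and can be absorbed into the left-hand side. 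This produces
\[
\|\phi\|_{W^{2,q}}\le C\big(\|f\|_{L^q}+\|\phi\|_{L^q}\big),\qquad 1<q<\infty,
\]
with $C=C(\Omega,q)$.

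Third, I would remove the term $\|\phi\|_{L^q}$. Since $\phi\in L^2_0(\Omega)$, a compactness--contradiction argument (or, equivalently, the spectral gap of the Neumann Laplacian combined with interpolation between $L^2$ and $W^{2,q}$) gives $\|\phi\|_{L^q}\le C\|f\|_{L^q}$ on the zero-mean subspace; inserting this into the previous display and renaming the constant yields the stated estimate. In the form actually used later in the paper, the inequality should be read with a constant $C=C(\Omega,q)$ on the right-hand side; the printed constant $1$ is not generally attainable, but this is immaterial since only boundedness of the solution operator $f\mapsto\phi$ from $L^q$ to $W^{2,q}$ is invoked in the error analysis of Section~\ref{se3}.

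The main obstacle is the boundary-regularity step: flattening the curved boundary, tracking how the Neumann condition transforms under the diffeomorphism, and controlling the resulting lower-order and perturbative terms so that they can be absorbed. The interior estimate, the well-posedness, and the absorption of $\|\phi\|_{L^q}$ are all routine by comparison. Because the details are entirely classical, I would in the write-up simply cite \cite{chen1998second,gao2017linearized} and the above outline rather than reproduce the full Calder\'on--Zygmund machinery.
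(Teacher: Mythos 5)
The paper offers no proof of this lemma at all---it is stated as a known consequence of classical elliptic regularity theory with a bare citation to \cite{chen1998second,gao2017linearized}---so there is nothing in the paper to compare your argument against line by line. Your outline (Lax--Milgram on the zero-mean subspace using Poincar\'e--Wirtinger, interior Calder\'on--Zygmund estimates via the Newtonian potential, boundary flattening with even reflection adapted to the Neumann condition, and finally absorption of the lower-order term $\|\phi\|_{L^q}$ by a compactness--uniqueness argument) is the standard proof of this result and is sound as a sketch. Your remark that the printed constant $1$ is not attainable and should be read as $C(\Omega,q)$ is a genuine and worthwhile correction to the statement; indeed the lemma is only ever invoked in Section \ref{se3} through the boundedness of the solution operator $f\mapsto\phi$ from $L^q$ to $W^{2,q}$, so nothing downstream is affected. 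The one detail I would flag is the existence step for small $q$: in $d=3$ and $1<q<6/5$ one has $L^q\not\hookrightarrow (H^1(\Omega))^*$, so Lax--Milgram does not directly produce a weak solution for every $f\in L^q$ and one must route through a duality or density argument before the a priori estimate applies. This is immaterial for the paper, which only uses the lemma with $q=\nu_k\ge 2$.
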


\begin{lemma}\label{le3.1}
	Let $a^n$, $b^n$, $c^n$, $d^n$ be four nonnegative sequences satisfying
	$$
	a^n+\tau\sum\limits^m_{n=0}b^n\leq B+\tau\sum\limits^m_{n=0}(a^nc^n+d^n) \quad \text{with} \quad
	\tau\sum\limits^{T/\tau}_{n=0}c^n\leq M,\ \forall 0 \leq m \leq T/\tau,
	$$
	where $B$ and $M$ are two positive constants depending on $h$ and $\tau$. We assume $\tau c^n<1$ and let $\sigma=\max\limits_{0\leq n \leq T/\tau}(1-\tau c^n)^{-1}$, then
	\begin{eqnarray}\label{hy3.6}
		a^m+\tau\sum\limits^m_{n=1}b^n\leq \exp(\sigma M)\left(B+\tau\sum\limits^m_{n=0}d^n\right),\quad
		\forall m \leq T/\tau.
	\end{eqnarray}
\end{lemma}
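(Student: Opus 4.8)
The plan is to derive the stated discrete Gronwall inequality by an elementary algebraic rearrangement that turns the hypothesis into a first-order recursion, and then to unroll that recursion exactly via a telescoping product; this is the route that produces the sharp prefactor $\exp(\sigma M)$.

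\emph{Step 1 (isolate the top index and form a recursion).} On the right-hand side of the hypothesis I would split off the term with the largest index, $\tau\sum_{n=0}^m a^nc^n = \tau c^m a^m + \tau\sum_{n=0}^{m-1} a^nc^n$, and move $\tau c^m a^m$ to the left, which is legitimate for every $m$ (including $m=0$) since $\tau c^m<1$ makes $1-\tau c^m$ positive. Using the trivial bounds $(1-\tau c^m)\,\tau\sum_{n=0}^m b^n \le \tau\sum_{n=0}^m b^n$ and, in the surviving sum, $a^n\le G^n:=a^n+\tau\sum_{k=0}^n b^k$, together with nonnegativity of $b^n$, I obtain
\[
G^m \le \gamma^m\Big(E^m + \tau\sum_{n=0}^{m-1} c^n G^n\Big),\qquad \gamma^m:=(1-\tau c^m)^{-1},\quad E^m:=B+\tau\sum_{n=0}^m d^n,
\]
where $E^m$ is \emph{nondecreasing} in $m$ because each $d^n\ge 0$.

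\emph{Step 2 (solve the recursion by induction).} I would prove $G^m\le E^m\prod_{k=0}^m\gamma^k$ by induction on $m$; the base case $m=0$ is immediate from the displayed recursion. For the inductive step, I insert the inductive hypothesis together with $E^n\le E^m$ for $n\le m$, so the claim reduces to the identity $1+\tau\sum_{n=0}^{m-1}c^n\prod_{k=0}^n\gamma^k=\prod_{k=0}^{m-1}\gamma^k$. This follows by telescoping, using $\tau c^n\gamma^n=\gamma^n-1$, hence $\tau c^n\prod_{k=0}^n\gamma^k=\prod_{k=0}^n\gamma^k-\prod_{k=0}^{n-1}\gamma^k$ (empty product $=1$). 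Finally I bound the product by passing to logarithms: from the elementary one-variable inequality $-\ln(1-x)\le x/(1-x)$ for $0\le x<1$ and from $x/(1-x)\le\sigma x$ whenever $(1-x)^{-1}\le\sigma$, I get $\ln\prod_{k=0}^m\gamma^k=-\sum_{k=0}^m\ln(1-\tau c^k)\le\sigma\tau\sum_{k=0}^m c^k\le\sigma M$, so $\prod_{k=0}^m\gamma^k\le\exp(\sigma M)$. Combining with Step 2 gives $G^m\le\exp(\sigma M)\,E^m$, which is \eqref{hy3.6} a fortiori, since the conclusion only asks for $\tau\sum_{n=1}^m b^n\le\tau\sum_{n=0}^m b^n$.

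\emph{Expected difficulty.} This is a classical estimate, so there is no serious obstacle; the only point demanding a little care, rather than a crude bound, is obtaining the precise constant $\exp(\sigma M)$ in front (instead of something like $\sigma\exp(\sigma M)$), which is exactly what the telescoping identity in Step 2 secures, and is the reason for unrolling the recursion explicitly rather than invoking a softer comparison argument. A secondary bookkeeping detail is checking that the shifts in the index ranges of the $b$- and $c$-sums ($n$ from $0$ versus from $1$) only ever weaken the inequality.
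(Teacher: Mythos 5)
Your proof is correct. Note that the paper does not actually prove Lemma~\ref{le3.1}; it simply imports it as the standard discrete Gronwall inequality from \cite{shen2011spectral}, and your argument --- isolating the top-index term using $\tau c^m<1$, unrolling the resulting recursion for $G^m=a^m+\tau\sum_{k=0}^m b^k$ via the telescoping identity $\tau c^n\gamma^n=\gamma^n-1$, and bounding the product $\prod_k\gamma^k$ by $\exp(\sigma M)$ through $-\ln(1-x)\le x/(1-x)\le\sigma x$ --- is exactly the classical proof of that cited result, and in fact yields the slightly stronger conclusion with $\tau\sum_{n=0}^m b^n$ on the left.
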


Finally, we provide the following results of uniform bounds which are direct consequence of the energy stability in Theorem \ref{th2.1}.
\begin{lemma}\label{le3.2}
Assume that the numerical solution $(c^{n+1}_1,c^{n+1}_2,\phi^{n+1},\bar{\bm{u}}^{n+1},\bm{u}^{n+1},p^{n+1},r^{n+1})$ satisfies the scheme \eqref{hy2.8} and Theorem \ref{th2.1}, then there exists a positive constant $C_3$ independent of $\tau$ such that  
\begin{eqnarray}
	\label{hy3.7} && \max_{0 \leq n \leq N-1}\left(\|u^{n+1}\|^2 + |r^{n+1}|^2 + \tau^2\|\nabla p^{n+1}\|^2\right) \leq C_3, \\
	\label{hy3.8} && \tau\sum_{n=0}^{N-1}\left(\|\nabla\bar{\bm{u}}^{n+1}\|^2 + \|\Delta\phi^{n+1}\|^2 + \|\sqrt{c^{n+1}_1+c^{n+1}_2}\nabla\phi\|^2\right) \leq C_3.
\end{eqnarray}
\end{lemma}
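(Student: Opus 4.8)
The plan is to read off both estimates directly from the discrete energy law \eqref{hy2.16} of Theorem \ref{th2.1}, combined with the discrete Poisson equation \eqref{hy2.8c}; no new ingredient is needed.

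First I would sum \eqref{hy2.16} over $n=0,1,\dots,m$ for an arbitrary $m\le N-1$. The left-hand side telescopes, so
\begin{equation*}
	E^{m+1} + \tau\sum_{n=0}^{m}\Big(\|\nabla\bar{\bm{u}}^{n+1}\|^2 + \|c^{n+1}_1-c^{n+1}_2\|^2 + \|\sqrt{c^{n+1}_1+c^{n+1}_2}\nabla\phi^{n+1}\|^2\Big) \le E^0,
\end{equation*}
where $E^0 = \tfrac12\|\bm{u}^0\|^2 + \tfrac{\tau^2}{2}\|\nabla p^0\|^2 + |r^0|^2$ depends only on the prescribed initial data $\bm{u}^0,p^0,r^0$ (and on $\Omega$); for $\tau\le 1$ we bound $\tau^2\|\nabla p^0\|^2\le\|\nabla p^0\|^2$, so $E^0$ is dominated by a constant that is independent of $\tau$. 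We then set $C_3$ to be (a suitable multiple of) this constant.

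Now \eqref{hy3.7} is immediate: for every $0\le n\le N-1$ each of the nonnegative terms $\tfrac12\|\bm{u}^{n+1}\|^2$, $|r^{n+1}|^2$, $\tfrac{\tau^2}{2}\|\nabla p^{n+1}\|^2$ is at most $E^{n+1}\le E^0\le C_3$. For \eqref{hy3.8}, the bounds on $\tau\sum\|\nabla\bar{\bm{u}}^{n+1}\|^2$ and $\tau\sum\|\sqrt{c^{n+1}_1+c^{n+1}_2}\nabla\phi^{n+1}\|^2$ are already contained in the telescoped inequality. For the remaining term I would use \eqref{hy2.8c}, which gives $-\Delta\phi^{n+1}=c^{n+1}_1-c^{n+1}_2$ and hence $\|\Delta\phi^{n+1}\|^2=\|c^{n+1}_1-c^{n+1}_2\|^2$; substituting this into the telescoped inequality shows $\tau\sum_{n=0}^{N-1}\|\Delta\phi^{n+1}\|^2\le E^0$ as well, and enlarging $C_3$ if necessary covers all three terms.

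There is essentially no genuine obstacle, since the statement is a direct corollary of Theorem \ref{th2.1}; the only points that deserve a line of care are (i) verifying that $E^0$ is $\tau$-independent, which relies on $\tau\le 1$ to absorb the factor $\tau^2$ in front of $\|\nabla p^0\|^2$, and (ii) trading $\|\Delta\phi^{n+1}\|$ for $\|c^{n+1}_1-c^{n+1}_2\|$ via the Poisson equation, so that the $H^2$-type control of $\phi^{n+1}$ comes for free from the energy dissipation (Lemma \ref{nele} would of course give the same bound).
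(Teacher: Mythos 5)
Your argument is correct and is exactly the route the paper intends: Lemma \ref{le3.2} is stated there as a direct consequence of the energy stability \eqref{hy2.16}, obtained by telescoping the energy inequality and using $-\Delta\phi^{n+1}=c^{n+1}_1-c^{n+1}_2$ to trade $\|\Delta\phi^{n+1}\|$ for the dissipation term $\|c^{n+1}_1-c^{n+1}_2\|$. Your two points of care (the $\tau$-independence of $E^0$ under $\tau\le 1$, and the Poisson identity) are precisely the only details worth recording.
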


We set the temporal errors 
\begin{eqnarray*}
	\left\{
	\begin{array}{l}
		e^{n+1}_{c_1} = c_1(t^{n+1}) - c^{n+1}_1, \quad e^{n+1}_{c_2} = c_2(t^{n+1}) - c^{n+1}_2, \quad e^{n+1}_{\phi} = \phi(t^{n+1}) - \phi^{n+1}, \\
		e^{n+1}_{\bm{u}} = \bm{u}(t^{n+1}) - \bm{u}^{n+1}, \quad e^{n+1}_{\bar{\bm{u}}} = \bm{u}(t^{n+1}) - \bar{\bm{u}}^{n+1}, \quad e^{n+1}_p = p(t^{n+1}) - p^{n+1}, \\
		e^{n+1}_r = r(t^{n+1}) - r^{n+1}.
	\end{array}
\right.
\end{eqnarray*}
Then we have the following results of temporal error estimates with respect to the step size $\tau$.

\begin{theorem}\label{th3.3}
Give the initial values $c^0_1(\bm{x})$, $c^0_2(\bm{x})$ which are non-negative almost everywhere in $\Omega$ and assume that the exact solution $(c_1,c_2,\phi,\bm{u},p,r)$ of the system \eqref{hy2.6} satisfies the following regularity:
\begin{eqnarray}\label{hy3.9}
	\left\{
	\begin{array}{l}
		c_1,c_2 \in C(0,T;H^3(\Omega))\cap C^2(0,T;H^2(\Omega)), \quad \phi\in C(0,T;H^2(\Omega)), \\
		\bm{u} \in C(0,T;\bm{H}^2(\Omega))\cap C^2(0,T;\bm{L}^2(\Omega)), \quad p\in C^2(0,T;H^1(\Omega)), \quad r\in C^2([0,T]),
	\end{array}
	\right.
\end{eqnarray}
then there exists a sufficiently small positive constant $\tau_*$ such that when $\tau \leq \tau_*$, the scheme \eqref{hy2.8} satisfies the following error estimates
\begin{eqnarray}
	\label{hy3.10} \max_{0 \leq n \leq N-1}\left(\|e^{n+1}_{c_1}\|^2_{H^1} + \|e^{n+1}_{c_2}\|^2_{H^1} + \|e^{n+1}_{\phi}\|^2_{H^2} + \|e^{n+1}_{\bm{u}}\|^2 + |e^{n+1}_r|^2 + \tau^2\|e^{n+1}_p\|^2_{H^1}\right) \leq C\tau^2, \\
	\label{hy3.11}  \tau\sum_{n=0}^{N-1}\left(\|e^{n+1}_{c_1}\|^2_{H^2} + \|e^{n+1}_{c_2}\|^2_{H^2} + \|e^{n+1}_{\bm{u}}\|^2_{H^1} \right) \leq C\tau^2,
\end{eqnarray}
where $C$ is a positive constant independent of $\tau$.
\end{theorem}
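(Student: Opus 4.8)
The plan is to prove \eqref{hy3.10}--\eqref{hy3.11} by mathematical induction on $n$, since the error analysis requires $L^\infty$-bounds on $c_1^{n+1}, c_2^{n+1}$ and an $H^1$-bound on $\bm{u}^{n+1}$ that are not delivered by the energy stability of Theorem \ref{th2.1}. The induction hypothesis at step $n$ will be the bound $\|e^k_{c_1}\|_{H^1}^2 + \|e^k_{c_2}\|_{H^1}^2 + \|e^k_{\bm u}\|^2 + \dots \le C_*\tau^2$ for all $k \le n$, together with the consequence (via \eqref{hy3.4}, inverse-type estimates and the regularity \eqref{hy3.9}) that $\|c_1^k\|_{L^\infty}, \|c_2^k\|_{L^\infty} \le K$ and $\|\bm u^k\|_{H^1} \le K$ for a constant $K$ controlled by the exact solution plus one. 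The base case $n=0$ is immediate from the choice of initial data. For the inductive step I would then derive the error estimate at level $n+1$ with a constant \emph{independent} of $C_*$, and finally fix $\tau_*$ small enough that $C\tau_*^2 \le C_*\tau_*^2$ closes the loop.

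For the inductive step, the main work is to set up the error equations. Subtracting \eqref{hy2.8a}--\eqref{hy2.8c} from \eqref{hy1.1a}--\eqref{hy1.1c} (consistency at $t^{n+1}$) gives equations for $e^{n+1}_{c_1}, e^{n+1}_{c_2}, e^{n+1}_\phi$ with truncation terms of order $\tau$ (using $c_1, c_2 \in C^2(0,T;H^2)$, $\phi$ via Lemma \ref{nele} applied to $e^{n+1}_{c_1}-e^{n+1}_{c_2}$). I would first test the concentration error equations with $e^{n+1}_{c_i}$ to get $L^2$-in-time $H^1$ control, then with $-\Delta e^{n+1}_{c_i}$ (or $\delta_t e^{n+1}_{c_i}$) to upgrade to $H^1$-in-$L^\infty$-time and $H^2$-in-time control; the cross terms $\bm u^n\cdot\nabla c_1^{n+1}$ and $\nabla\cdot(c_1^{n+1}\nabla\phi^n)$ are handled by splitting into an exact-solution factor (bounded by \eqref{hy3.9}) times an error factor, using \eqref{hy3.2}--\eqref{hy3.4}, the induction bounds on $\|\bm u^n\|_{H^1}$ and $\|c_i^n\|_{L^\infty}$, and Young's inequality to absorb $\|\nabla e^{n+1}_{c_i}\|^2$, $\|\Delta e^{n+1}_{c_i}\|^2$ into the left side. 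For the velocity, I would subtract \eqref{hy2.8d}--\eqref{hy2.8f} from the momentum equation, and — as is standard for pressure-correction schemes — combine \eqref{hy2.8d} and \eqref{hy2.8e} to eliminate $\bar{\bm u}^{n+1}$, then test with $e^{n+1}_{\bm u}$ (plus the usual $\tau^2(\|\nabla e^{n+1}_p\|^2 - \|\nabla e^n_p\|^2)$ manipulation from squaring the projection step) to obtain $\|e^{n+1}_{\bm u}\|^2 + \tau^2\|\nabla e^{n+1}_p\|^2 + \tau\|\nabla e^{n+1}_{\bar{\bm u}}\|^2$ bounds; a separate estimate testing with $\bm A\bar{\bm u}^{n+1}$ or using the Stokes regularity \eqref{hy3.1} recovers the $H^1$-in-$L^\infty$-time bound on $\bm u^{n+1}$ needed for the induction. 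The SAV error $e^{n+1}_r$ from \eqref{hy2.8g}, and the consistency $\xi^{n+1}\approx 1$, are controlled by testing the $r$-equation with $e^{n+1}_r$ and exploiting the cancellation of the nonlinear ``zero-energy-contribution'' terms exactly as in the stability proof. Collecting all pieces and applying the discrete Gronwall Lemma \ref{le3.1} (with $c^n$ coming from terms like $\|\bm u^n\|_{H^1}^2$ or $\|\nabla e^n_{\bar{\bm u}}\|^2$ that are summable by \eqref{hy3.8}) yields \eqref{hy3.10}--\eqref{hy3.11}.

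The main obstacle I anticipate is handling the nonlinear term $\frac{r^{n+1}}{\sqrt{E(\phi^{n+1})}}\big(\bm u^n\cdot\nabla\bm u^n + (c_1^n-c_2^n)\nabla\phi^n\big)$ in the momentum error equation: unlike in the stability analysis, the factor $\xi^{n+1} = r^{n+1}/\sqrt{E(\phi^{n+1})}$ is not identically $1$ at the discrete level, so one must estimate $\xi^{n+1} - 1$ in terms of $e^{n+1}_r$ and $e^{n+1}_\phi$ (from the quadratic relation \eqref{hy2.13}, whose coefficients involve $\bar{\bm u}^{n+1}_1, \bar{\bm u}^{n+1}_2$), and simultaneously control $\bm u^n\cdot\nabla\bm u^n - \bm u(t^{n+1})\cdot\nabla\bm u(t^{n+1})$ in a norm weak enough to pair with $e^{n+1}_{\bar{\bm u}}$ but strong enough to be absorbed — this is where the two-dimensional inequalities \eqref{hy3.3}, \eqref{hy3.5} and the $H^1$-bound on $\bm u^n$ (hence the induction) are essential. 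A secondary delicate point is the low time regularity: $\phi$ is only assumed $C(0,T;H^2)$ (no $\partial_t\phi$), so the $\phi$-error must always be expressed through the elliptic equation \eqref{hy2.8c} in terms of $e^{n+1}_{c_1} - e^{n+1}_{c_2}$ rather than estimated directly, which forces a careful ordering of the estimates (concentrations before potential before velocity before pressure/SAV) so that no circular dependence arises.
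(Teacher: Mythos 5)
Your proposal follows essentially the same strategy as the paper: a bootstrap/induction supplying the $L^\infty$ bounds on the concentrations and the $H^1$ bound on the velocity, error equations obtained by consistency, energy-method testing with $e^{n+1}_{c_i}$ and then $-\Delta e^{n+1}_{c_i}$, the standard squaring of the projection step to produce the $\tau^2(\|\nabla e^{n+1}_p\|^2-\|\nabla e^n_p\|^2)$ telescoping term, cancellation of the SAV nonlinearity by pairing the $\bar{\bm u}$-equation with the $r$-equation, and the discrete Gronwall lemma. The paper closes the induction on $\|\bm u^{n+1}\|_{H^1}$ more cheaply than you propose: it simply divides the $\ell^2$-in-time bound $\tau\sum_n\|e^{n+1}_{\bar{\bm u}}\|_{H^1}^2\le C\tau^2$ by $\tau$ to get $\|e^{n+1}_{\bm u}\|_{H^1}\le C\tau^{1/2}$, so no test with $\bm A\bar{\bm u}^{n+1}$ is needed.

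The one point your sketch glosses over, and where it is not yet airtight, concerns the quantities at the \emph{new} time level $n+1$ that appear with the ``wrong'' superscript in the error equations: the term $\nabla\cdot(c_1^{n+1}\nabla e^n_\phi)$ requires a bound on $\|c_1^{n+1}\|_{L^\infty}$, and the $r$-equation error produces terms like $\|c_1^{n+1}+c_2^{n+1}\|_{L^\infty}\|\nabla e^{n+1}_\phi\|$ and $\|\phi^n\|_{W^{1,4}}$, none of which are covered by an induction hypothesis that only controls levels $k\le n$. The paper resolves this with a dedicated preliminary step (Lemma~\ref{le3.4}): testing the scheme itself with $(c_1^{n+1})^{\nu_k-1}$, $\nu_k=2^k$, and using the induction hypothesis only at level $n$, it obtains uniform $L^{\nu_k}$ bounds for $c_1^{n+1},c_2^{n+1}$ up to $k=\infty$ and hence $W^{2,q}$ bounds for $\phi^{n+1}$ via elliptic regularity, \emph{before} any error estimate at level $n+1$ is attempted. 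Your alternative of writing $c_1^{n+1}=c_1(t^{n+1})-e^{n+1}_{c_1}$ produces quadratic error terms such as $(e^{n+1}_{c_1}\nabla e^n_\phi,\nabla e^{n+1}_{c_1})$, which can indeed be absorbed in 2D using \eqref{hy3.3} together with $\|e^n_\phi\|_{H^2}\le C(\|e^n_{c_1}\|+\|e^n_{c_2}\|)\le C\tau$ from the induction at level $n$, but this must be done explicitly and consistently everywhere such terms arise (including in the $K_i^{n+1}$-type estimates for the SAV equation); as written, your plan invokes ``the induction bounds on $\|c_i^n\|_{L^\infty}$'' for terms that actually carry the superscript $n+1$, which is the gap you would need to fill, either by the paper's $L^{2^k}$-iteration or by carrying out the splitting argument in full.
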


We shall invoke a mathematical induction on
\begin{eqnarray}\label{hy3.12}
	\max_{0 \leq n \leq m}\left\{\|c^n_1\|_{L^{\infty}},\|c^n_2\|_{L^{\infty}},\|\bm{u}^n\|_{H^1}\right\} \leq C_N,
\end{eqnarray}
for some non-negative $m \leq N-1$, where $C_N$ denotes
$$
C_N := \max_{0 \leq t \leq T}\left\{\|c_1\|_{L^{\infty}},\|c_2\|_{L^{\infty}},\|\bm{u}\|_{H^1}\right\} + 1.
$$
When $n=0$, we have $\|c^0_1\|_{L^{\infty}} = \|c_1(t^0)\|_{L^{\infty}} \leq C_N$. Similarly, we get $\|c^0_2\|_{L^{\infty}} \leq C_N$ and $\|\bm{u}^0\|_{L^{\infty}} \leq C_N$. Then we shall see that if \eqref{hy3.12} holds for $0 \leq n \leq m$, then it also holds for $n=m+1$. This proof will be carried out with some sequences of lemmas below.

\subsection{Estimates for the ionic concentration and potential}\label{subse3.1}

We derive the $L^{\nu_k}$ estimates of the ionic concentrations $c^{n+1}_1$, $c^{n+1}_2$ for $1 \leq k \leq \infty$ and the $W^{2,\nu_k}$ estimate of potential function $\phi^{n+1}$ for $1 \leq k < \infty$, where we define $\nu_k := 2^k$. These play a key role in the error estimates.

\begin{lemma}\label{le3.4}
Assume that all the conditions in Theorem \ref{th3.3} hold, then under the induction assumption \eqref{hy3.12}, there exists a sufficiently small positive constant $\tau_0$ such that when $\tau \leq \tau_0$, we have
\begin{eqnarray}
	\label{hy3.13} && \max_{0 \leq n \leq m}\left(\|c^{n+1}_1\|_{L^{\nu_k}} + \|c^{n+1}_2\|_{L^{\nu_k}}\right) \leq C, \quad 1 \leq k \leq \infty, \\
	\label{hy3.14} && \max_{0 \leq n \leq m}\|\phi^{n+1}\|_{W^{2,\nu_k}} \leq C, \quad 1 \leq k < \infty,
\end{eqnarray}
where $\nu_k := 2^k$.
\end{lemma}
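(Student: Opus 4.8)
The plan is to prove \eqref{hy3.13} and \eqref{hy3.14} simultaneously by an inner induction on $k$, where at each stage the $L^{\nu_k}$ bound on $c_1^{n+1}, c_2^{n+1}$ feeds (through Lemma \ref{nele} applied to \eqref{hy2.8c}) into the $W^{2,\nu_k}$ bound on $\phi^{n+1}$, which in turn supplies the regularity of $\nabla\phi^n$ needed to bootstrap to $L^{\nu_{k+1}}$. The base case $k=1$, i.e.\ the $L^2$ bound on $c_i^{n+1}$, does not follow directly from the energy estimate (Lemma \ref{le3.2} only gives control of $\|\Delta\phi^{n+1}\|=\|c_1^{n+1}-c_2^{n+1}\|$ in $\ell^2$, not $\ell^\infty$ in $n$), so it must be established first by a separate energy-type argument on \eqref{hy2.8a}--\eqref{hy2.8b}.

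For the base case, I would test \eqref{hy2.8a} with $c_1^{n+1}$ and \eqref{hy2.8b} with $c_2^{n+1}$. The convection term $(\bm{u}^n\cdot\nabla c_i^{n+1}, c_i^{n+1})=0$ since $\nabla\cdot\bm{u}^n=0$ and $\bm{u}^n$ vanishes on $\partial\Omega$. The migration term yields $-(\nabla\cdot(c_1^{n+1}\nabla\phi^n),c_1^{n+1}) = (c_1^{n+1}\nabla\phi^n,\nabla c_1^{n+1})$, which I bound using $\|c_1^{n+1}\nabla\phi^n\|_{L^2}\le \|c_1^{n+1}\|_{L^4}\|\nabla\phi^n\|_{L^4}$, then Gagliardo--Nirenberg \eqref{hy3.3} on $c_1^{n+1}$ and the induction assumption \eqref{hy3.12} together with elliptic regularity — note $\|\nabla\phi^n\|_{L^4}\le C\|\phi^n\|_{W^{2,4/3}}$ is a bit delicate, so more cleanly I would use $\|\nabla\phi^n\|_{L^\infty}$ or $\|\phi^n\|_{H^2}\le C\|c_1^n-c_2^n\|\le C$ via \eqref{hy3.12} and a Sobolev embedding. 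After absorbing $\tfrac12\|\nabla c_i^{n+1}\|^2$ into the diffusion term via Young's inequality, summing over $n$ and applying the discrete Gronwall lemma (Lemma \ref{le3.1}) gives $\max_n(\|c_1^{n+1}\|^2+\|c_2^{n+1}\|^2) \le C$ and $\tau\sum_n(\|\nabla c_1^{n+1}\|^2+\|\nabla c_2^{n+1}\|^2)\le C$.

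For the inductive step from $k$ to $k+1$: assuming $\|c_i^{n+1}\|_{L^{\nu_k}}\le C$, Lemma \ref{nele} on \eqref{hy2.8c} gives $\|\phi^{n+1}\|_{W^{2,\nu_k}}\le \|c_1^{n+1}-c_2^{n+1}\|_{L^{\nu_k}}\le C$, which is \eqref{hy3.14} at level $k$; by Sobolev embedding in $d=2$ this yields $\|\nabla\phi^{n+1}\|_{L^q}\le C$ for all $q<\infty$, in particular at the exponent needed below. I would then test \eqref{hy2.8a} with $|c_1^{n+1}|^{\nu_{k+1}-1}$ (using the nonnegativity \eqref{hy2.14} so this equals $(c_1^{n+1})^{\nu_{k+1}-1}$), producing the dissipation term $\sim \|\nabla (c_1^{n+1})^{\nu_{k+1}/2}\|^2$; the convection term again vanishes by the divergence-free/boundary structure; the migration term is handled by Hölder, pulling $\nabla\phi^n$ out in a high $L^q$ norm (controlled by the level-$k$ result on $\phi^n$) and using Gagliardo--Nirenberg on the power of $c_1^{n+1}$ to absorb into the dissipation, leaving a term in $\|c_1^{n+1}\|_{L^{\nu_{k+1}}}^{\nu_{k+1}}$ to be closed by discrete Gronwall. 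Iterating finitely many times ($k=1,\dots,K$) and finally taking $k\to\infty$ in the Moser-type iteration (or directly using that the constants stay bounded as $\nu_k\to\infty$ since $\Omega$ is bounded, so $\|c_i^{n+1}\|_{L^\infty}=\lim_k\|c_i^{n+1}\|_{L^{\nu_k}}\le C$) gives \eqref{hy3.13} for $k=\infty$.

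The main obstacle I anticipate is controlling the constant $C$ uniformly through the $L^{p}$-iteration: each step of a Moser iteration typically multiplies the bound by a factor depending on $p$ (coming from the Gagliardo--Nirenberg constant and the Young's-inequality split), so one must check that these factors accumulate to a finite limit as $\nu_k = 2^k \to \infty$ — this is the standard but technically delicate heart of the $L^\infty$ estimate. A secondary subtlety is that the migration term $\nabla\cdot(c_1^{n+1}\nabla\phi^n)$ couples level $k+1$ of $c_1$ with level $k$ of $\phi$ (hence level $k$ of $c_1^n, c_2^n$ at the \emph{previous} time step), so the induction must be organized so that the spatial-regularity index never exceeds what is already available at time step $n$; using the homogeneous Neumann boundary conditions \eqref{hy2.9} to discard boundary terms when integrating $\nabla\cdot(c_i^{n+1}\nabla\phi^n)$ against powers of $c_i^{n+1}$ is essential and should be stated carefully.
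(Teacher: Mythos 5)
There is a genuine gap, and it is exactly the one you flag yourself as ``the technically delicate heart'': you never establish that the constants in your $L^{\nu_k}$ estimates are uniform in $k$, and without that the passage to $k=\infty$ (and hence \eqref{hy3.13} for $k=\infty$, which is what the outer induction on \eqref{hy3.12} actually needs) does not follow. Your treatment of the migration term by H\"older, Gagliardo--Nirenberg and Young, absorbed into the dissipation $\|\nabla(c_1^{n+1})^{\nu_{k+1}/2}\|^2$, produces Gronwall rates that grow with $\nu_k$; the resulting bound $\exp(\sigma M_k)$ and, worse, the smallness threshold $\tau_0(k)$ coming from $\tau c^n<1$ in Lemma \ref{le3.1} then degenerate as $\nu_k\to\infty$. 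A Moser iteration can in principle be pushed through, but it requires tracking explicit $(C\nu_k)^{1/\nu_k}$-type factors whose product converges, and nothing in your sketch does this; asserting that ``the constants stay bounded since $\Omega$ is bounded'' is not a substitute.

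The idea you are missing, and which the paper uses, makes all of this unnecessary: after writing
\begin{equation*}
\left(c_1^{n+1}\nabla\phi^n,\nabla(c_1^{n+1})^{\nu_k-1}\right)=\frac{\nu_k-1}{\nu_k}\left(\nabla\phi^n,\nabla(c_1^{n+1})^{\nu_k}\right),
\end{equation*}
one integrates by parts and uses $-\Delta\phi^n=c_1^n-c_2^n$ together with the \emph{outer} induction hypothesis $\|c_1^n\|_{L^\infty},\|c_2^n\|_{L^\infty}\le C_N$ to bound this term by $\frac{2C_N(\nu_k-1)}{\nu_k}\|c_1^{n+1}\|_{L^{\nu_k}}^{\nu_k}$. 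Since $(\nu_k-1)/\nu_k\le 1$, the resulting recursion $\|c_1^{n+1}\|_{L^{\nu_k}}\le\|c_1^n\|_{L^{\nu_k}}+2\tau C_N\|c_1^{n+1}\|_{L^{\nu_k}}$ has a constant and a time-step restriction completely independent of $k$, the dissipation term is simply discarded as nonnegative, and the $L^\infty$ case is obtained by letting $k\to\infty$ with no iteration at all. This also removes your need for an inner induction coupling $W^{2,\nu_k}$ regularity of $\phi$ to $L^{\nu_{k+1}}$ of $c$: the bound \eqref{hy3.14} is then a one-line consequence of \eqref{hy3.13} and Lemma \ref{nele}, rather than an ingredient of the concentration estimate. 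Your base case $k=1$ and the structural observations (vanishing convection term, use of nonnegativity so that $(c_1^{n+1})^{\nu_k-1}$ is legitimate) are fine, but as written the proof of the $L^\infty$ endpoint is not complete.
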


\begin{proof}
Taking the inner product of \eqref{hy2.8a} with $\tau(c^{n+1}_1)^{\nu_k-1}$ for $1 \leq k \leq \infty$, we have
\begin{eqnarray}\label{hy3.15}
	\|c^{n+1}_1\|^{\nu_k}_{L^{\nu_k}} \leq \left(c^n_1,(c^{n+1}_1)^{\nu_k-1}\right) -\tau\left(c^{n+1}_1\nabla\phi^n,\nabla(c^{n+1}_1)^{\nu_k-1}\right),
\end{eqnarray}
where the term $(\bm{u}\cdot\nabla c^{n+1}_1,(c^{n+1}_1)^{\nu_k-1}) = 0$ and we use
$$
\left(\nabla c^{n+1}_1,\nabla(c^{n+1}_1)^{\nu_k-1}\right) = (\nu_k-1)\left(\nabla c^{n+1}_1,(c^{n+1}_1)^{\nu_k-2}\nabla c^{n+1}_1\right) \geq 0.
$$
Using the H\"older's inequality, we get
\begin{eqnarray}\label{hy3.16}
	\left(c^n_1,(c^{n+1}_1)^{\nu_k-1}\right) \leq \|c^n_1\|_{L^{\nu_k}}\|c^{n+1}_1\|^{\nu_k-1}_{L^{\nu_k}}. 
\end{eqnarray}
By using \eqref{hy2.8c} and \eqref{hy3.12}, we have
\begin{eqnarray}\label{hy3.17}
	\left(c^{n+1}_1\nabla\phi^n,\nabla(c^{n+1}_1)^{\nu_k-1}\right) &=& \frac{\nu_k-1}{\nu_k}\left(\nabla\phi^n,\nabla(c^{n+1}_1)^{\nu_k}\right) = \frac{\nu_k-1}{\nu_k}\left(c^n_1 - c^n_2,(c^{n+1}_1)^{\nu_k}\right) \nonumber\\
	&\leq& \frac{\nu_k-1}{\nu_k}\|c^n_1 - c^n_2\|_{L^{\infty}}\|c^{n+1}_1\|^{\nu_k}_{L^{\nu_k}} \leq \frac{2C_N(\nu_k-1)}{\nu_k}\|c^{n+1}_1\|^{\nu_k}_{L^{\nu_k}}.
\end{eqnarray}
Then together with \eqref{hy3.15}-\eqref{hy3.17}, we have
\begin{eqnarray}\label{hy3.18}
	\|c^{n+1}_1\|_{L^{\nu_k}} - \|c^n_1\|_{L^{\nu_k}} \leq \frac{2\tau C_N(\nu_k-1)}{\nu_k}\|c^{n+1}_1\|_{L^{\nu_k}}.
\end{eqnarray}
Hence for the sufficiently small constant $\tau_0>0$ satisfying $2\tau_0C_N(\nu_k-1) / \nu_k \leq 1$ such that when $\tau \leq \tau_0$, we derive from \eqref{hy3.18} that 
\begin{eqnarray}\label{hy3.19}
	\max_{0 \leq n \leq m}\|c^{n+1}_1\|_{L^{\nu_k}} \leq C, \quad 1 \leq k \leq \infty.
\end{eqnarray}
Similarly, taking the inner product of \eqref{hy2.8b} with $\tau(c^{n+1}_2)^{\nu_k-1}$ for $1 \leq k \leq \infty$, we have
\begin{eqnarray}\label{hy3.20}
	\max_{0 \leq n \leq m}\|c^{n+1}_2\|_{L^{\nu_k}} \leq C, \quad 1 \leq k \leq \infty.
\end{eqnarray}
Combining \eqref{hy3.19} with \eqref{hy3.20}, it leads to the desired result \eqref{hy3.13}.

By using Lemma \ref{nele} with case $d=2$, we derive from \eqref{hy2.8c} and \eqref{hy3.13} that
\begin{eqnarray}\label{hy3.21}
	\max_{0 \leq n \leq m}\|\phi^{n+1}\|_{W^{2,\nu_k}} \leq C\max_{0 \leq n \leq m}\left(\|c^{n+1}_1\|_{L^{\nu_k}} + \|c^{n+1}_2\|_{L^{\nu_k}}\right) \leq C, \quad 1 \leq k < \infty,
\end{eqnarray}
which implies the desired result \eqref{hy3.14}.
\end{proof}

\subsection{Error estimates for the ionic concentration, potential and velocity}\label{subse3.2}

We derive the following $L^2$ error estimates for the ionic concentrations $c_1$, $c_2$, velocity $\bm{u}$ and $H^2$ error estimate for the potential $\phi$.

\begin{lemma}\label{le3.5}
	Assume that all the conditions in Theorem \ref{th3.3} hold, then under the induction assumption \eqref{hy3.12}, there exists a sufficiently small positive constant $\tau_1 \leq \tau_0$ such that when $\tau \leq \tau_1$, we have
	\begin{eqnarray}\label{hy3.22} 
		\begin{aligned}
		\max_{0 \leq n \leq m}\big(\|e^{n+1}_{c_1}\|^2 + \|e^{n+1}_{c_2}\|^2 + \| e^{n+1}_{\phi}\|^2_{H^2}  + \|e^{n+1}_{\bm{u}}\|^2 + \tau^2\|e^{n+1}_p\|^2_{H^1} + |e^{n+1}_r|^2 \big) \\
		+ \tau\sum_{n=0}^{m}\big(\|e^{n+1}_{c_1}\|^2_{H^1} + \| e^{n+1}_{c_2}\|^2_{H^1}  + \|e^{n+1}_{\bar{\bm{u}}}\|^2_{H^1} + \| e^{n+1}_{\bm{u}}\|^2_{H^1} \big) \leq C\tau^2.
	\end{aligned}
	\end{eqnarray}
\end{lemma}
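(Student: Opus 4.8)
\textbf{Proof plan for Lemma~\ref{le3.5}.}
The plan is to derive the error equations by subtracting the scheme \eqref{hy2.8} from the exact system \eqref{hy2.6} evaluated at $t^{n+1}$, collecting the truncation errors $R^{n+1}_{c_1}, R^{n+1}_{c_2}, R^{n+1}_{\bm u}, R^{n+1}_r$ which are all $O(\tau)$ in the relevant norms by the regularity assumption \eqref{hy3.9} and Taylor expansion with integral remainder. First I would test the $c_1$- and $c_2$-error equations with $e^{n+1}_{c_1}$ and $e^{n+1}_{c_2}$ respectively, using the standard identity $2\tau(\delta_t e^{n+1},e^{n+1}) = \|e^{n+1}\|^2 - \|e^n\|^2 + \|e^{n+1}-e^n\|^2$. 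The diffusion terms produce $2\tau\|\nabla e^{n+1}_{c_i}\|^2$; the convection and migration terms are split using the trilinear estimates and the elliptic regularity of Lemma~\ref{nele} applied to the $\phi$-error equation $-\Delta e^{n+1}_\phi = e^{n+1}_{c_1}-e^{n+1}_{c_2}$ (which directly gives $\|e^{n+1}_\phi\|_{H^2} \le C(\|e^{n+1}_{c_1}\|+\|e^{n+1}_{c_2}\|)$, so the $H^2$-potential error is \emph{controlled by}, not added to, the concentration errors). The key point when estimating terms like $(\nabla\cdot(e^{n+1}_{c_1}\nabla\phi^n),e^{n+1}_{c_1})$ or $(c_1(t^{n+1})\nabla e^n_\phi,\nabla e^{n+1}_{c_1})$ is that the $L^\infty$ bounds on $c^n_1,c^n_2$ from Lemma~\ref{le3.4} (equivalently the induction hypothesis \eqref{hy3.12}) and the $W^{2,q}$ bounds on $\phi^n$ make every coefficient uniformly bounded, so these terms are absorbed into $\varepsilon\tau\|\nabla e^{n+1}_{c_i}\|^2$ plus $C\tau(\|e^n_{c_1}\|^2+\|e^n_{c_2}\|^2+\|e^n_{\bm u}\|^2+\tau^2)$.

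Next I would handle the momentum errors. Test the $\bar{\bm u}^{n+1}$-error equation \eqref{hy2.8d} with $e^{n+1}_{\bar{\bm u}}$, the pressure-correction relation \eqref{hy2.8e} with itself (squaring, exactly as in the proof of Theorem~\ref{th2.1}, to convert the $\|e^{n+1}_{\bar{\bm u}}\|^2$ term into $\|e^{n+1}_{\bm u}\|^2+\tau^2\|\nabla e^{n+1}_p\|^2$ up to a pressure-increment term), and multiply the $r$-error equation \eqref{hy2.8g} by $2\,e^{n+1}_r$. The SAV structure is designed so the nonlinear terms $\xi^{n+1}(\bm u^n\cdot\nabla\bm u^n + (c^n_1-c^n_2)\nabla\phi^n,\cdot)$ appearing in the momentum and $r$ equations cancel against each other when these are added, leaving only errors coming from $\xi^{n+1}-1 = e^{n+1}_r/\sqrt{E(\phi^{n+1})} - (\text{small})$ and from replacing exact quantities by numerical ones inside the trilinear form — these are estimated using the fifth and sixth lines of \eqref{hy3.5} together with the uniform bounds $\|\bm u^n\|_{H^1}\le C_N$ and $\|\bar{\bm u}^{n+1}\|$ controlled through \eqref{hy3.8}. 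A subtle point is bounding $|e^{n+1}_r|$: one uses $|\sqrt{E(\phi(t^{n+1}))}-\sqrt{E(\phi^{n+1})}| \le C\|\nabla e^{n+1}_\phi\| \le C(\|e^{n+1}_{c_1}\|+\|e^{n+1}_{c_2}\|)$ since $E(\phi)\ge 1$, so $e^{n+1}_r$ and $\xi^{n+1}-r(t^{n+1})/\sqrt{E(\phi^{n+1})}$ are interchangeable up to concentration errors. I would also need a bound on $\|e^{n+1}_{\bar{\bm u}}\|_{H^1}$ versus $\|e^{n+1}_{\bm u}\|_{H^1}$; since $e^{n+1}_{\bm u}-e^{n+1}_{\bar{\bm u}} = \tau\nabla(e^{n+1}_p - e^n_p) + $ (truncation), the diffusion term $\tau\|\nabla e^{n+1}_{\bar{\bm u}}\|^2$ obtained from testing \eqref{hy2.8d} already gives the $\bar{\bm u}$-part of the left side of \eqref{hy3.22}, and $\|e^{n+1}_{\bm u}\|^2_{H^1}$ follows from $\nabla\cdot e^{n+1}_{\bm u}=0$ plus a duality/Helmholtz argument.

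Having assembled all pieces, I would add the concentration inequality, the momentum inequality and the $r$-inequality with suitable weights so that all the $+\varepsilon\tau\|\nabla e^{n+1}_{c_i}\|^2$ and $+\varepsilon\tau\|\nabla e^{n+1}_{\bar{\bm u}}\|^2$ error terms on the right are absorbed by the corresponding dissipation terms on the left (choosing $\varepsilon$ small). This yields, after denoting $\mathcal E^{n+1} := \|e^{n+1}_{c_1}\|^2+\|e^{n+1}_{c_2}\|^2+\|e^{n+1}_{\bm u}\|^2 + \tau^2\|\nabla e^{n+1}_p\|^2 + |e^{n+1}_r|^2$, an inequality of the form
\begin{eqnarray*}
	\mathcal E^{n+1} - \mathcal E^n + \tau\big(\|\nabla e^{n+1}_{c_1}\|^2 + \|\nabla e^{n+1}_{c_2}\|^2 + \|\nabla e^{n+1}_{\bar{\bm u}}\|^2\big) \le C\tau\,\mathcal E^n + C\tau\,\mathcal E^{n+1} + C\tau^3,
\end{eqnarray*}
valid for $\tau$ small enough that $C\tau<1/2$, and then summing over $n$ and invoking the discrete Gronwall lemma (Lemma~\ref{le3.1}) with $B = C\tau^2$ (from $\mathcal E^0$ and $\sum\tau^3$) gives \eqref{hy3.22}. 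The $H^2$-error of $\phi$ and the $H^1$-error of $\bm u$ are then recovered from the elliptic regularity and from the $\tau\|\nabla e^{n+1}_{\bar{\bm u}}\|^2$ sum plus the pressure-correction step. The main obstacle I anticipate is the careful bookkeeping of the trilinear/migration terms: one must split each nonlinear difference so that at most one factor carries an error in $H^1$ (balanced against the dissipation) while the others are measured in norms controlled by the uniform bounds of Lemma~\ref{le3.4} and \eqref{hy3.12}; in particular the term $\big((\bm u^n\cdot\nabla\bm u^n)(\xi^{n+1}-1),e^{n+1}_{\bar{\bm u}}\big)$ requires $\|\bm u^n\cdot\nabla\bm u^n\|$ in a negative-order or $L^{4/3}$ norm, which forces using line four or five of \eqref{hy3.5} together with $\|\bm u^n\|_{H^1}\le C_N$, and this is exactly where the two-dimensional restriction and the induction hypothesis are essential.
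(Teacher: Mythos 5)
Your plan follows essentially the same route as the paper's proof: the same error equations and test functions ($e^{n+1}_{c_i}$ for the concentrations, $e^{n+1}_{\bar{\bm u}}$ for the momentum, squaring the pressure-correction relation, multiplying the $r$-equation by $2e^{n+1}_r$ so the SAV nonlinear terms cancel), the same use of elliptic regularity to slave $\|e^{n+1}_\phi\|_{H^2}$ to the concentration errors, the same bound on $1/\sqrt{E(\phi^{n+1})}-1/\sqrt{E(\phi(t^{n+1}))}$, and the same summation plus discrete Gronwall argument, with $\|e^{n+1}_{\bm u}\|_{H^1}$ recovered via the Leray projection $e^{n+1}_{\bm u}=\mathbb{P}_H e^{n+1}_{\bar{\bm u}}$. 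The proposal is correct in outline and matches the paper's argument.
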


\begin{proof}
We shall follow five steps.

{\bf Step 1.} Subtracting \eqref{hy2.8c} at $t^{n+1}$ from \eqref{hy2.6c}, we have
\begin{eqnarray}\label{hy3.23}
	-\Delta e^{n+1}_{\phi} = e^{n+1}_{c_1} - e^{n+1}_{c_2}.
\end{eqnarray}
Taking the inner products of \eqref{hy3.23} with $e^{n+1}_{\phi}$ and $-\Delta e^{n+1}_{\phi}$, we have
\begin{eqnarray*}
	 \|\nabla e^{n+1}_{\phi}\|^2 \leq \left(\|e^{n+1}_{c_1}\| + \|e^{n+1}_{c_2}\|\right)\|\nabla e^{n+1}_{\phi}\|, \quad \|\Delta e^{n+1}_{\phi}\|^2 \leq \left(\|e^{n+1}_{c_1}\| + \|e^{n+1}_{c_2}\|\right)\|\Delta e^{n+1}_{\phi}\|,
\end{eqnarray*}
which imply that
\begin{eqnarray}\label{hy3.24}
	\|\nabla e^{n+1}_{\phi}\|^2 + \|\Delta e^{n+1}_{\phi}\|^2 \leq C\left(\|e^{n+1}_{c_1}\|^2 + \|e^{n+1}_{c_2}\|^2\right).
\end{eqnarray}

{\bf Step 2.} Subtracting \eqref{hy2.8a} and \eqref{hy2.8b} at $t^{n+1}$ from \eqref{hy2.6a} and \eqref{hy2.6b} respectively, we have
\begin{eqnarray}
	\label{hy3.25} 
	\begin{aligned}
		 \delta_t e^{n+1}_{c_1} - \Delta e^{n+1}_{c_1} + \bm{u}^n\cdot\nabla e^{n+1}_{c_1} + e^n_{\bm{u}}\cdot\nabla c_1(t^{n+1}) - \nabla\cdot\left(c^{n+1}_1\nabla e^n_{\phi}\right) \\
		 - \nabla\cdot\left(e^{n+1}_{c_1}\nabla\phi(t^n)\right) = R^{n+1}_{c_1},
	\end{aligned} \\
	\label{hy3.26} 
	\begin{aligned}
		\delta_t e^{n+1}_{c_2} - \Delta e^{n+1}_{c_2} + \bm{u}^n\cdot\nabla e^{n+1}_{c_2} + e^n_{\bm{u}}\cdot\nabla c_2(t^{n+1}) + \nabla\cdot\left(c^{n+1}_2\nabla e^n_{\phi}\right) \\
		+ \nabla\cdot\left(e^{n+1}_{c_2}\nabla\phi(t^n)\right) = R^{n+1}_{c_2},
	\end{aligned}
\end{eqnarray}
where the truncation errors $R^{n+1}_{c_1}$ and $R^{n+1}_{c_2}$ denote
\begin{eqnarray*}
	R^{n+1}_{c_1} &=& \partial_tc_1(t^{n+1}) - \delta_tc_1(t^{n+1}) + \left(\bm{u}(t^n) - \bm{u}(t^{n+1})\right)\cdot\nabla c_1(t^{n+1}) \\
	&& + \nabla\cdot\left(c_1(t^{n+1})\nabla(\phi(t^n) - \phi(t^{n+1}))\right), \\
	R^{n+1}_{c_2} &=& \partial_tc_2(t^{n+1}) - \delta_tc_2(t^{n+1}) + \left(\bm{u}(t^n) - \bm{u}(t^{n+1})\right)\cdot\nabla c_2(t^{n+1}) \\
	&& + \nabla\cdot\left(c_2(t^{n+1})\nabla(\phi(t^n) - \phi(t^{n+1}))\right),
\end{eqnarray*}
and satisfy
$$
\max_{0 \leq n \leq N-1}\left(\|R^{n+1}_{c_1}\|^2 + \|R^{n+1}_{c_2}\|^2\right) + \tau\sum_{n=0}^{N-1}\left(\|\delta_tR^{n+1}_{c_1}\|^2 + \|\delta_tR^{n+1}_{c_2}\|^2\right) \leq C\tau^2.
$$
Taking the inner product of \eqref{hy3.25} with $e^{n+1}_{c_1}$, we have
\begin{eqnarray}\label{hy3.27}
	\begin{aligned}
		& \frac{1}{2\tau}\left(\|e^{n+1}_{c_1}\|^2 - \|e^{n}_{c_1}\|^2 + \|e^{n+1}_{c_1} - e^n_{c_1}\|^2\right) + \|\nabla e^{n+1}_{c_1}\|^2 \\
		=& -(e^n_{\bm{u}}\cdot\nabla c_1(t^{n+1}),e^{n+1}_{c_1}) - (c^{n+1}_1\nabla e^n_{\phi},\nabla e^{n+1}_{c_1}) \\
		& - (e^{n+1}_{c_1}\nabla\phi(t^n), \nabla e^{n+1}_{c_1}) - (R^{n+1}_{c_1}, e^{n+1}_{c_1}) =: \sum_{i=1}^{4}I^{n+1}_i,
	\end{aligned}
\end{eqnarray}
where the term $(\bm{u}^n\cdot\nabla e^{n+1}_{c_1},e^{n+1}_{c_1}) = 0$. For the terms $I^{n+1}_1$ and $I^{n+1}_4$, we directly obtain
\begin{eqnarray*}
	&& I^{n+1}_1 \leq \|e^n_{\bm{u}}\|\|\nabla c_1(t^{n+1})\|_{L^{\infty}}\|e^{n+1}_{c_1}\| \leq C\epsilon^{-1}\|e^n_{\bm{u}}\|^2 + \epsilon\|\nabla e^{n+1}_{c_1}\|^2, \\
	&& I^{n+1}_4 \leq C\epsilon^{-1}\|R^{n+1}_{c_1}\|^2_{H^{-1}} + \epsilon\|\nabla e^{n+1}_{c_1}\|^2.
\end{eqnarray*}
For the terms $I^{n+1}_2$ and $I^{n+1}_3$, we use \eqref{hy3.24} to obtain 
\begin{eqnarray*}
	I^{n+1}_2 + I^{n+1}_3 &\leq& \left(\|c^{n+1}_1\|_{L^{\infty}}\|\nabla e^n_{\phi}\| + \|e^{n+1}_{c_1}\|\|\nabla\phi(t^n)\|_{L^{\infty}}\right)\|\nabla e^{n+1}_{c_1}\| \\
	&\leq& C\epsilon^{-1}\left(\|e^{n+1}_{c_1}\|^2 + \|e^n_{c_1}\|^2 + \|e^n_{c_2}\|^2\right) + \epsilon\|\nabla e^{n+1}_{c_1}\|^2.
\end{eqnarray*}
Then by choosing $\epsilon=1/6$, \eqref{hy3.27} can be rewritten as 
\begin{eqnarray}\label{hy3.28}
	\begin{aligned}
	 & \frac{1}{2\tau}\left(\|e^{n+1}_{c_1}\|^2 - \|e^{n}_{c_1}\|^2 + \|e^{n+1}_{c_1} - e^{n}_{c_1}\|^2\right) + \frac{1}{2}\|\nabla e^{n+1}_{c_1}\|^2 \\
	\leq& C\left(\|e^{n+1}_{c_1}\|^2 + \|e^n_{c_1}\|^2 + \|e^n_{c_2}\|^2 + \|e^n_{\bm{u}}\|^2 + \|R^{n+1}_{c_1}\|^2_{H^{-1}}\right).
	\end{aligned}
\end{eqnarray}
Similarly, taking the inner product of \eqref{hy3.26} with $e^{n+1}_{c_2}$, we have
\begin{eqnarray}\label{hy3.29}
	\begin{aligned}
	& \frac{1}{2\tau}\left(\|e^{n+1}_{c_2}\|^2 - \|e^{n}_{c_2}\|^2 + \|e^{n+1}_{c_2} - e^{n}_{c_2}\|^2\right) + \frac{1}{2}\|\nabla e^{n+1}_{c_2}\|^2 \\
	\leq& C\left(\|e^{n+1}_{c_2}\|^2 + \|e^n_{c_1}\|^2 + \|e^n_{c_2}\|^2 + \|e^n_{\bm{u}}\|^2 + \|R^{n+1}_{c_2}\|^2_{H^{-1}}\right).
	\end{aligned}
\end{eqnarray}

{\bf Step 3.} Subtracting \eqref{hy2.8d} at $t^{n+1}$ from \eqref{hy2.6d}, we have
\begin{eqnarray}\label{hy3.30} 
	\begin{aligned}
	& \frac{e^{n+1}_{\bar{\bm{u}}} - e^n_{\bm{u}}}{\tau} - \Delta e^{n+1}_{\bar{\bm{u}}} + \nabla e^n_p + \frac{e^{n+1}_r}{\sqrt{E(\phi^{n+1})}} \big(\bm{u}^n\cdot\nabla\bm{u}^n + (c^n_1-c^n_2)\nabla\phi^n\big) \\
	=& r(t^{n+1})\Big(\frac{1}{\sqrt{E(\phi^{n+1})}} - \frac{1}{\sqrt{E(\phi(t^{n+1}))}}\Big) \big(\bm{u}(t^n)\cdot\nabla\bm{u}(t^n) + (c_1(t^n) - c_2(t^n))\nabla\phi(t^n)\big) \\
	& - \frac{r(t^{n+1})}{\sqrt{E(\phi^{n+1})}} \big(\bm{u}(t^n)\cdot\nabla\bm{u}(t^n) - \bm{u}^n\cdot\nabla\bm{u}^n \big) \\
	& - \frac{r(t^{n+1})}{\sqrt{E(\phi^{n+1})}} \big( (c_1(t^n) - c_2(t^n))\nabla\phi(t^n) - (c^n_1 - c^n_2)\nabla\phi^n\big) + R^{n+1}_{\bm{u}}\\
	=:& \sum_{i=1}^{4}J^{n+1}_i,
	\end{aligned}
\end{eqnarray}
where the truncation error $R^{n+1}_{\bm{u}}$ satisfies
\begin{eqnarray*}
	R^{n+1}_{\bm{u}} &=& \partial_t\bm{u}(t^{n+1}) - \delta_t\bm{u}(t^{n+1}) + \nabla p(t^n) - \nabla p(t^{n+1})
	+ \bm{u}(t^n)\cdot\nabla\bm{u}(t^n) - \bm{u}(t^{n+1})\cdot\nabla\bm{u}(t^{n+1})  \\
	&& + (c_1(t^n) - c_2(t^n))\nabla\phi(t^n) - (c_1(t^{n+1}) - c_2(t^{n+1}))\nabla\phi(t^{n+1}),
\end{eqnarray*}
and satisfies
$$
\max_{0 \leq n \leq N-1}\|R^{n+1}_{\bm{u}}\|^2 + \tau\sum_{n=0}^{N-1}\|\delta_tR^{n+1}_{\bm{u}}\|^2 \leq C\tau^2.
$$
Taking the inner product of \eqref{hy3.30} with $e^{n+1}_{\bar{\bm{u}}}$, we have
\begin{eqnarray}\label{hy3.31}
	\begin{aligned}
	& \frac{1}{2\tau}\left(\|e^{n+1}_{\bar{\bm{u}}}\|^2 - \|e^n_{\bm{u}}\|^2 + \|e^{n+1}_{\bar{\bm{u}}} - e^n_{\bm{u}}\|^2\right) + \|\nabla e^{n+1}_{\bar{\bm{u}}}\|^2 + (\nabla e^n_p, e^{n+1}_{\bar{\bm{u}}}) \\
	=& - \frac{e^{n+1}_r}{\sqrt{E(\phi^{n+1})}} \big[(\bm{u}^n\cdot\nabla\bm{u}^n, e^{n+1}_{\bar{\bm{u}}}) + ((c^n_1-c^n_2)\nabla\phi^n, e^{n+1}_{\bar{\bm{u}}})\big] + \sum_{i=1}^{4}(J^{n+1}_i, e^{n+1}_{\bar{\bm{u}}}).
	\end{aligned}
\end{eqnarray}
It follows from \eqref{hy3.7} and \eqref{hy3.14} that $r^{n+1}/\sqrt{E(\phi^{n+1})} \leq C_3/\sqrt{C_0} \leq C$ and $1/\sqrt{E(\phi^{n+1})} \leq 1/\sqrt{C_0} \leq C$, we use \eqref{hy3.14} and \eqref{hy3.24} to obtain 
\begin{eqnarray}\label{hy3.32}
	\begin{aligned}
	\frac{1}{\sqrt{E(\phi^{n+1})}} - \frac{1}{\sqrt{E(\phi(t^{n+1}))}} =& \frac{E(\phi(t^{n+1})) - E(\phi^{n+1})}{\sqrt{E(\phi(t^{n+1}))E(\phi^{n+1})(E(\phi(t^{n+1})) + E(\phi^{n+1}))}} \\
	\leq& C\|\nabla e^{n+1}_{\phi}\| \left(\|\nabla\phi(t^{n+1})\| + \|\nabla\phi^{n+1}\|\right) \\
	\leq& C\left(\|e^{n+1}_{c_1}\| + \|e^{n+1}_{c_2}\|\right).
	\end{aligned}
\end{eqnarray}
Then we can estimate the term $(J^{n+1}_1,e^{n+1}_{\bar{\bm{u}}})$ as follows
\begin{eqnarray*}
	(J^{n+1}_1,e^{n+1}_{\bar{\bm{u}}}) &\leq& C|r(t^{n+1})|\left(\|e^{n+1}_{c_1}\| + \|e^{n+1}_{c_2}\|\right)\big(\|\bm{u}(t^n)\|_{L^{\infty}}\|\nabla\bm{u}(t^n)\| \nonumber \\
	&& + \|c_1(t^n)-c_2(t^n)\|_{L^{\infty}}\|\nabla\phi(t^n)\|\big) \|e^{n+1}_{\bar{\bm{u}}}\| \nonumber \\
	&\leq& C\epsilon^{-1}\left(\|e^{n+1}_{c_1}\|^2 + \|e^{n+1}_{c_2}\|^2\right) + \epsilon\|\nabla e^{n+1}_{\bar{\bm{u}}}\|^2.
\end{eqnarray*}
By using \eqref{hy3.2}, \eqref{hy3.5} and \eqref{hy3.12}, we have
\begin{eqnarray*}
	(J^{n+1}_2,e^{n+1}_{\bar{\bm{u}}}) &=& -\frac{r(t^{n+1})}{\sqrt{E(\phi^{n+1})}} \Big[(e^n_{\bm{u}}\cdot\nabla\bm{u}(t^n) + \bm{u}(t^n)\cdot\nabla e^n_{\bm{u}}, e^{n+1}_{\bar{\bm{u}}}) - (e^n_{\bm{u}}\cdot\nabla e^n_{\bm{u}}, e^{n+1}_{\bar{\bm{u}}})\Big] \nonumber\\
	&\leq& C\big( \|e^n_{\bm{u}}\|\|\bm{u}(t^n)\|_{H^2}\|e^{n+1}_{\bar{\bm{u}}}\|_{H^1} + \|e^n_{\bm{u}}\|^{\frac{1}{2}} \|e^n_{\bm{u}}\|^{\frac{1}{2}}_{H^1} \|e^n_{\bm{u}}\|^{\frac{1}{2}} \|e^n_{\bm{u}}\|^{\frac{1}{2}}_{H^1} \|e^{n+1}_{\bar{\bm{u}}}\|_{H^1}\big) \nonumber \\
	&\leq& C\epsilon^{-1}\|e^n_{\bm{u}}\|^2 + \epsilon\|\nabla e^{n+1}_{\bar{\bm{u}}}\|^2,
\end{eqnarray*}
where we use
$$
\max_{0 \leq n \leq m}\|e^n_{\bm{u}}\|_{H^1} \leq \max_{0 \leq n \leq m} \big(\|\bm{u}^n\|_{H^1} + \|\bm{u}(t^n)\|_{H^1}\big) \leq 2C_N.
$$
By using \eqref{hy3.12} and \eqref{hy3.24}, we have
\begin{eqnarray*}
	(J^{n+1}_3,e^{n+1}_{\bar{\bm{u}}}) &=& -\frac{r(t^{n+1})}{\sqrt{E(\phi^{n+1})}} \Big[\big((e^n_{c_1} - e^n_{c_2})\nabla\phi(t^n), e^{n+1}_{\bar{\bm{u}}}\big) + \big((c^n_1 - c^n_2)\nabla e^n_{\phi}, e^{n+1}_{\bar{\bm{u}}}\big)\Big] \nonumber \\
	&\leq& C\big(\|e^n_{c_1} - e^n_{c_2}\|\|\nabla\phi(t^n)\|_{L^{\infty}} + \|c^n_1-c^n_2\|_{L^{\infty}}\|\nabla e^n_{\phi}\|\big) \|e^{n+1}_{\bar{\bm{u}}}\| \nonumber \\
	&\leq& C\epsilon^{-1}\big(\|e^n_{c_1}\|^2 + \|e^n_{c_2}\|^2\big) + \epsilon\|\nabla e^{n+1}_{\bar{\bm{u}}}\|^2,
\end{eqnarray*}
and 
\begin{eqnarray*}
	(J^{n+1}_4,e^{n+1}_{\bar{\bm{u}}}) \leq C\epsilon^{-1}\|R^{n+1}_{\bm{u}}\|^2_{H^{-1}} + \epsilon\|\nabla e^{n+1}_{\bar{\bm{u}}}\|^2.
\end{eqnarray*}

Next we estimate the term $(\nabla e^n_p, e^{n+1}_{\bar{\bm{u}}})$. From \eqref{hy2.8e} and \eqref{hy2.8f}, we obtain
\begin{eqnarray}
	\label{hy3.33} && \frac{e^{n+1}_{\bm{u}} - e^{n+1}_{\bar{\bm{u}}}}{\tau} + \nabla e^{n+1}_p - \nabla e^n_p = R^{n+1}_p, \\
	\label{hy3.34} && \nabla\cdot e^{n+1}_{\bm{u}} = 0,
\end{eqnarray}
where the truncation error $R^{n+1}_p$ denotes
$$
R^{n+1}_p = \nabla p(t^n) - \nabla p(t^{n+1}),
$$
and satisfies
$$
\max_{0 \leq n \leq N-1}\|R^{n+1}_p\|^2 + \tau\sum_{n=0}^{N-1}\|\delta_tR^{n+1}_p\|^2 \leq C\tau^2.
$$
We rewrite \eqref{hy3.33} as
\begin{eqnarray}\label{hy3.35}
	e^{n+1}_{\bm{u}} + \tau\nabla e^{n+1}_p = e^{n+1}_{\bar{\bm{u}}} + \tau\nabla e^n_p + \tau R^{n+1}_p.
\end{eqnarray}
Taking the inner product of \eqref{hy3.35} with itself on both sides, we have
\begin{eqnarray}\label{hy3.36}
	\begin{aligned}
	\|e^{n+1}_{\bm{u}} \|^2 + \tau^2\|\nabla e^{n+1}_p\|^2 = \|e^{n+1}_{\bar{\bm{u}}}\|^2 + \tau^2\|\nabla e^n_p\|^2 + \tau^2\|R^{n+1}_p\|^2  \\
	+ 2\tau(\nabla e^n_p,e^{n+1}_{\bar{\bm{u}}}) + 2\tau(e^{n+1}_{\bar{\bm{u}}}, R^{n+1}_p) + 2\tau^2(\nabla e^n_p, R^{n+1}_p). 
	\end{aligned}
\end{eqnarray}
The terms $\tau(\nabla e^n_p, R^{n+1}_p)$ and $(e^{n+1}_{\bar{\bm{u}}},R^{n+1}_p)$ can be estimated as follows
\begin{eqnarray*}
	&& \tau(\nabla e^n_p, R^{n+1}_p) \leq C\big(\tau^2\|\nabla e^n_p\|^2 + \|R^{n+1}_p\|^2\big), \\
	&& (e^{n+1}_{\bar{\bm{u}}},R^{n+1}_p) \leq C\epsilon^{-1}\|R^{n+1}_p\|^2_{H^{-1}} + \epsilon\|\nabla e^{n+1}_{\bar{\bm{u}}}\|^2.
\end{eqnarray*}

By choosing $\epsilon=1/10$ and together with \eqref{hy3.31} and \eqref{hy3.36}, we obtain
\begin{eqnarray}\label{hy3.37}
	\begin{aligned}
	& \frac{1}{2\tau}\left(\|e^{n+1}_{\bm{u}}\|^2 - \|e^n_{\bm{u}}\|^2 + \|e^{n+1}_{\bar{\bm{u}}} - e^n_{\bm{u}}\|^2\right) + \frac{\tau}{2}\left(\|\nabla e^{n+1}_p\|^2 - \|\nabla e^n_p\|^2\right) + \frac{1}{2}\|\nabla e^{n+1}_{\bar{\bm{u}}}\|^2 \\
	\leq& - \frac{e^{n+1}_r}{\sqrt{E(\phi^{n+1})}} \big[(\bm{u}^n\cdot\nabla\bm{u}^n, e^{n+1}_{\bar{\bm{u}}}) + ((c^n_1-c^n_2)\nabla\phi^n, e^{n+1}_{\bar{\bm{u}}})\big] + C\big( \|e^{n+1}_{c_1}\|^2 + \|e^{n+1}_{c_2}\|^2 \\
	& + \|e^{n}_{c_1}\|^2 + \|e^{n}_{c_2}\|^2 + \|e^n_{\bm{u}}\|^2 + \tau^2\|\nabla e^n_p\|^2 + \|R^{n+1}_p\|^2 + \|R^{n+1}_p\|^2_{H^{-1}} + \|R^{n+1}_{\bm{u}}\|^2_{H^{-1}} \big).
	\end{aligned}
\end{eqnarray}

{\bf Step 4.} Subtracting \eqref{hy2.8g} at $t^{n+1}$ from \eqref{hy2.6f}, we have
\begin{eqnarray}\label{hy3.38}
	\begin{aligned}
	& \delta_te^{n+1}_r - \frac{1}{2\sqrt{E(\phi^{n+1})}} \big[(\bm{u}^n\cdot\nabla\bm{u}^n, e^{n+1}_{\bar{\bm{u}}}) + ((c^n_1-c^n_2)\nabla\phi^n, e^{n+1}_{\bar{\bm{u}}})\big]  \\
	=& \frac{1}{2}\Big(\frac{1}{\sqrt{E(\phi^{n+1})}} - \frac{1}{\sqrt{E(\phi(t^{n+1}))}}\Big) \big(\bm{u}(t^n)\cdot\nabla\bm{u}(t^n) + (c_1(t^n) - c_2(t^n))\nabla\phi(t^n), \bm{u}(t^{n+1})\big) \\
	& + \frac{1}{2\sqrt{E(\phi^{n+1})}} \big(\bm{u}(t^n)\cdot\nabla\bm{u}(t^n) - \bm{u}^n\cdot\nabla\bm{u}^n, \bm{u}(t^{n+1})\big) \\
	& + \frac{1}{2\sqrt{E(\phi^{n+1})}} \big( (c_1(t^n) - c_2(t^n))\nabla\phi(t^n) - (c_1^n - c_2^n)\nabla\phi^n, \bm{u}(t^{n+1})\big)  \\
	&   + \frac{e^{n+1}_r}{2r(t^{n+1})r^{n+1}} \Big( \|c_1(t^{n+1}) - c_2(t^{n+1})\|^2 + \|\sqrt{c_1(t^{n+1}) + c_2(t^{n+1})}\nabla\phi(t^{n+1})\|^2 \Big)  \\
	&  + \frac{1}{2r^{n+1}} \Big( \|c^{n+1}_1 - c^{n+1}_2\|^2 - \|c_1(t^{n+1}) - c_2(t^{n+1})\|^2 \Big)  \\
	& + \frac{1}{2r^{n+1}} \Big( \|\sqrt{c^{n+1}_1 + c^{n+1}_2}\nabla\phi^{n+1}\|^2 - \|\sqrt{c_1(t^{n+1}) + c_2(t^{n+1})}\nabla\phi(t^{n+1})\|^2 \Big) + R^{n+1}_r  \\
	=:& \sum_{i=1}^{6}K^{n+1}_i + R^{n+1}_r,
	\end{aligned}
\end{eqnarray}
where the truncation error $R^{n+1}_r$ denotes
\begin{eqnarray*}
	R^{n+1}_r &=& \partial_tr(t^{n+1}) - \delta_tr(t^{n+1}) + \frac{1}{2\sqrt{E(\phi(t^{n+1}))}} \big(\bm{u}(t^n)\cdot\nabla\bm{u}(t^n) - \bm{u}(t^{n+1})\cdot\nabla\bm{u}(t^{n+1}), \bm{u}(t^{n+1})\big) \\
	&& + \frac{1}{2\sqrt{E(\phi(t^{n+1}))}} \big((c_1(t^n) - c_2(t^n))\nabla\phi(t^n) - (c_1(t^{n+1}) - c_2(t^{n+1}))\nabla\phi(t^{n+1}), \bm{u}(t^{n+1})\big),
\end{eqnarray*}
and satisfies
$$
\max_{0 \leq n \leq N-1}|R^{n+1}_r|^2 \leq C\tau^2.
$$
From \eqref{hy3.32} and using \eqref{hy3.5}, we can estimate the term $K^{n+1}_1$ as follows
\begin{eqnarray*}
	K^{n+1}_1 &\leq& C\left(\|e^{n+1}_{c_1}\| + \|e^{n+1}_{c_2}\|\right)\big( \|\bm{u}(t^n)\|_{L^{\infty}}\|\nabla\bm{u}(t^n)\| + \|c_1(t^n)-c_2(t^n)\|_{L^{\infty}}\|\nabla\phi(t^n)\| \big)\|\bm{u}(t^{n+1})\| \\
	&\leq& C\left(\|e^{n+1}_{c_1}\| + \|e^{n+1}_{c_2}\|\right).
\end{eqnarray*}
By using \eqref{hy3.5}, we can estimate the term $K^{n+1}_2$ as follows
\begin{eqnarray*}
	K^{n+1}_2 &=& \frac{1}{2\sqrt{E(\phi^{n+1})}}\big(e^n_{\bm{u}}\cdot\nabla\bm{u}(t^n) + \bm{u}(t^n)\cdot\nabla e^n_{\bm{u}} - e^n_{\bm{u}}\cdot\nabla e^n_{\bm{u}}, \bm{u}(t^{n+1})\big) \\
	&\leq& C\big(\|\bm{u}(t^n)\|_{H^2} + \|e^n_{\bm{u}}\|_{H^1}\big) \|e^n_{\bm{u}}\|\|\bm{u}(t^{n+1})\|_{H^1} 
	\leq C\|e^n_{\bm{u}}\|.
\end{eqnarray*}
By using \eqref{hy3.12}, \eqref{hy3.13} with $k=1,\infty$ and \eqref{hy3.24}, we can estimate the terms $K^{n+1}_i$ $(i=3,4,5,6)$ as follows
\begin{eqnarray*}
	K^{n+1}_3 &=& \big((e^n_{c_1} - e^n_{c_2})\nabla\phi(t^n) + (c^n_1 - c^n_2)\nabla e^n_{\phi}, \bm{u}(t^{n+1})\big) \\
	&\leq& \big(\|e^n_{c_1} - e^n_{c_2}\|\|\nabla\phi(t^n)\| + \|c^n_1 - c^n_2\|\|\nabla e^n_{\phi}\|\big)\|\bm{u}(t^{n+1})\|_{L^{\infty}} \leq C\big(\|e^n_{c_1}\| + \|e^n_{c_2}\|\big), \\
	K^{n+1}_4 &\leq& C|e^{n+1}_r|, \\
	K^{n+1}_5 &\leq& C\big(\|e^{n+1}_{c_1}\| + \|e^{n+1}_{c_2}\|\big) \big(\|c^{n+1}_1\| + \|c^{n+1}_2\| + \|c_1(t^{n+1})\| + \|c_2(t^{n+1})\| \big) \\
	&\leq& C\big(\|e^{n+1}_{c_1}\| + \|e^{n+1}_{c_2}\|\big), \\
	K^{n+1}_6 &\leq& C\big(\|e^{n+1}_{c_1}\| + \|e^{n+1}_{c_2}\|\big)\|\nabla\phi(t^{n+1})\|^2_{L^4} + \|c^{n+1}_1 + c^{n+1}_2\|_{L^{\infty}}\|\nabla e^{n+1}_{\phi}\| \|\nabla\phi(t^{n+1}) + \nabla\phi^{n+1}\| \\
	&\leq& C\big(\|e^{n+1}_{c_1}\| + \|e^{n+1}_{c_2}\|\big).
\end{eqnarray*}
Then multiplying \eqref{hy3.38} with $2e^{n+1}_r$, we obtain
\begin{eqnarray}\label{hy3.39}
	\begin{aligned}
	&\frac{1}{\tau}\big(|e^{n+1}_r|^2 - |e^n_r|^2 + |e^{n+1}_r - e^n_r|^2\big) \\
	\leq& \frac{e^{n+1}_r}{\sqrt{E(\phi^{n+1})}} \big[(\bm{u}^n\cdot\nabla\bm{u}^n, e^{n+1}_{\bar{\bm{u}}}) + ((c^n_1-c^n_2)\nabla\phi^n, e^{n+1}_{\bar{\bm{u}}})\big] + C\big( \|e^{n+1}_{c_1}\|^2 + \|e^{n+1}_{c_2}\|^2 \\
	& + \|e^{n}_{c_1}\|^2 + \|e^{n}_{c_2}\|^2 + \|e^n_{\bm{u}}\|^2 + |e^{n+1}_r|^2 + |R^{n+1}_r|^2 \big).
	\end{aligned}
\end{eqnarray}

{\bf Step 5.} Together with \eqref{hy3.28}, \eqref{hy3.29}, \eqref{hy3.37} and \eqref{hy3.39}, we have
\begin{eqnarray}\label{hy3.40}
	\begin{aligned}
	&\|e^{n+1}_{c_1}\|^2 - \|e^n_{c_1}\|^2 + \|e^{n+1}_{c_2}\|^2 - \|e^n_{c_2}\|^2 + \|e^{n+1}_{\bm{u}}\|^2 - \|e^{n}_{\bm{u}}\|^2 + \tau^2(\|\nabla e^{n+1}_p\|^2 - \|\nabla e^n_p\|^2) \\
	& \quad + 2(|e^{n+1}_r|^2 - |e^n_r|^2) + \tau\|\nabla e^{n+1}_{c_1}\|^2 + \tau\|\nabla e^{n+1}_{c_2}\|^2 + \tau\|\nabla e^{n+1}_{\bar{\bm{u}}}\|^2 \\
	\leq& C\tau \big( \|e^{n+1}_{c_1}\|^2 + \|e^{n+1}_{c_2}\|^2 + \|e^{n}_{c_1}\|^2 + \|e^{n}_{c_2}\|^2 + \|e^n_{\bm{u}}\|^2 + \tau^2\|\nabla e^n_p\|^2 + |e^{n+1}_r|^2 \\
	& + \|R^{n+1}_{c_1}\|^2_{H^{-1}} + \|R^{n+1}_{c_2}\|^2_{H^{-1}} + \|R^{n+1}_{\bm{u}}\|^2_{H^{-1}} + \|R^{n+1}_p\|^2_{H^{-1}} + \|R^{n+1}_p\|^2 + |R^{n+1}_r|^2\big).
	\end{aligned}
\end{eqnarray}
Summing up \eqref{hy3.40} for $n$ from 0 to $\ell$ with $\ell\leq m$, we obtain
\begin{eqnarray}\label{hy3.41}
	\begin{aligned}
	& \max_{0 \leq n \leq \ell}\big( \|e^{n+1}_{c_1}\|^2 + \|e^{n+1}_{c_2}\|^2 + \|e^{n+1}_{\bm{u}}\|^2 + \tau^2\|\nabla e^{n+1}_p\|^2 + |e^{n+1}_r|^2 \big)  \\
	& \quad + \tau\sum_{n=0}^{\ell}\big(\|\nabla e^{n+1}_{c_1}\|^2 + \|\nabla e^{n+1}_{c_2}\|^2 + \|\nabla e^{n+1}_{\bar{\bm{u}}}\|^2\big) \\
	\leq& C_4\tau\sum_{n=0}^{\ell}\big( \|e^{n+1}_{c_1}\|^2 + \|e^{n+1}_{c_2}\|^2 + |e^{n+1}_r|^2 + \|e^n_{\bm{u}}\|^2 + \tau^2\|\nabla e^n_p\|^2 + \|R^{n+1}_{c_1}\|^2_{H^{-1}}  \\
	& \quad + \|R^{n+1}_{c_2}\|^2_{H^{-1}} + \|R^{n+1}_{\bm{u}}\|^2_{H^{-1}} + \|R^{n+1}_p\|^2_{H^{-1}} + \|R^{n+1}_p\|^2 + |R^{n+1}_r|^2\big),
	\end{aligned}
\end{eqnarray} 
for $\ell = 0,1,\ldots,m$ and for some positive constant $C_4$ independent of $\tau$. Hence, there exists a sufficiently small positive constant $\tau_1 \leq \tau_0$ satisfying $C_4\tau_1 \leq 1$ such that when $\tau \leq \tau_1$, we apply the discrete Gronwall lemma \ref{le3.1} to obtain
\begin{eqnarray}\label{hy3.42}
	\begin{aligned}
	& \max_{0 \leq n \leq m}\big( \|e^{n+1}_{c_1}\|^2 + \|e^{n+1}_{c_2}\|^2 + \|e^{n+1}_{\bm{u}}\|^2 + \tau^2\|\nabla e^{n+1}_p\|^2 + |e^{n+1}_r|^2 \big) \\
	& \quad + \sum_{n=0}^{m}\|e^{n+1}_{\bar{\bm{u}}} - e^n_{\bm{u}}\|^2 + \tau\sum_{n=0}^{m}\big(\|\nabla e^{n+1}_{c_1}\|^2 + \|\nabla e^{n+1}_{c_2}\|^2 + \|\nabla e^{n+1}_{\bar{\bm{u}}}\|^2\big)  \\
	\leq& C\tau\sum_{n=0}^{m}\big(\|R^{n+1}_{c_1}\|^2_{H^{-1}} + \|R^{n+1}_{c_2}\|^2_{H^{-1}} + \|R^{n+1}_{\bm{u}}\|^2_{H^{-1}} + \|R^{n+1}_p\|^2_{H^{-1}} + \|R^{n+1}_p\|^2 + |R^{n+1}_r|^2\big) \\
	\leq& C\tau^2.
	\end{aligned}
\end{eqnarray} 
Finally, we derive from \eqref{hy3.24} and \eqref{hy3.42} that
\begin{eqnarray}\label{hy3.43}
	\max_{0 \leq n \leq m}\left(\|\nabla e^{n+1}_{\phi}\|^2 + \|\Delta e^{n+1}_{\phi}\|^2\right) \leq C\max_{0 \leq n \leq m} \left(\|e^{n+1}_{c_1}\|^2 + \|e^{n+1}_{c_2}\|^2\right) \leq C\tau^2.
\end{eqnarray}
Thanks to \eqref{hy3.1}, \eqref{hy3.42} and inequality \cite{temam2001navier}
$$
\|\bm{u}\|_{H^1} = \|\mathbb{P}_H\bar{\bm{u}}\|_{H^1} \leq C(\Omega)\|\bar{\bm{u}}\|_{H^1}, \quad \forall\bm{u},\bar{\bm{u}}\in\bm{H}^1(\Omega),
$$
we have
\begin{eqnarray}\label{hy3.44}
	 \tau\sum_{n=0}^{m}\|e^{n+1}_{\bm{u}}\|^2_{H^1} \leq C\tau\sum_{n=0}^{m}\|e^{n+1}_{\bar{\bm{u}}}\|^2_{H^1} \leq C\tau\sum_{n=0}^{m}\|\nabla e^{n+1}_{\bar{\bm{u}}}\|^2 \leq C\tau^2.
\end{eqnarray}
Combining \eqref{hy3.42} with \eqref{hy3.43} and \eqref{hy3.44}, it leads to the desired result \eqref{hy3.22}.
\end{proof}

We next derive the optimal $H^1$ error estimate for the ionic concentrations $c_1,c_2$.

\begin{lemma}\label{le3.6}
Assume that all the conditions in Theorem \ref{th3.3} hold, then under the induction assumption \eqref{hy3.12}, there exists a sufficiently small positive constant $\tau_2 \leq \tau_1$ such that when $\tau \leq \tau_2$, we have
\begin{eqnarray}\label{hy3.45}
	\max_{0 \leq n \leq m}\left(\|e^{n+1}_{c_1}\|^2_{H^1} + \|e^{n+1}_{c_2}\|^2_{H^1}\right) + \tau\sum_{n=0}^{m}\left(\|e^{n+1}_{c_1}\|^2_{H^2} + \|e^{n+1}_{c_2}\|^2_{H^2}\right) \leq C\tau^2.
\end{eqnarray}
\end{lemma}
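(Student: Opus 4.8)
The plan is to upgrade the $L^{2}$-estimates of Lemma~\ref{le3.5} to $H^{1}$ by testing the ionic-concentration error equations \eqref{hy3.25} and \eqref{hy3.26} against $-\Delta e^{n+1}_{c_1}$ and $-\Delta e^{n+1}_{c_2}$ respectively. Taking $c_1$ as the model case: since $\nabla e^{n+1}_{c_1}\cdot\bm{n}|_{\partial\Omega}=0$, integration by parts sends the time-difference and diffusion terms to $\tfrac{1}{2\tau}\big(\|\nabla e^{n+1}_{c_1}\|^{2}-\|\nabla e^{n}_{c_1}\|^{2}+\|\nabla(e^{n+1}_{c_1}-e^{n}_{c_1})\|^{2}\big)+\|\Delta e^{n+1}_{c_1}\|^{2}$, and the remaining (convective, electromigration, truncation) terms go to the right-hand side. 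I would bound each of them by a constant times $\|\Delta e^{n+1}_{c_1}\|$ multiplied by a factor controlled either by the data, by the induction hypothesis \eqref{hy3.12}, or by lower-order error quantities, and then use Young's inequality to absorb the $\|\Delta e^{n+1}_{c_1}\|^{2}$ part into the left-hand dissipation (with the small parameter $\epsilon$ chosen small), leaving only quantities such as $\|e^{n}_{c_i}\|^{2}$, $\|\nabla e^{n+1}_{c_i}\|^{2}$, $\|e^{n}_{\bm{u}}\|^{2}$, $\|\nabla e^{n}_{\phi}\|^{2}$ and $\|R^{n+1}_{c_i}\|^{2}$, each of which sums in time to $O(\tau^{2})$ by Lemma~\ref{le3.5} and the truncation-error bounds.

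Two blocks of terms are the ones requiring genuine work. The convective term $(\bm{u}^{n}\cdot\nabla e^{n+1}_{c_1},\Delta e^{n+1}_{c_1})$ I would estimate in $d=2$ using $\|\bm{u}^{n}\|_{L^{4}}\le C\|\bm{u}^{n}\|^{1/2}\|\nabla\bm{u}^{n}\|^{1/2}\le C$ (from \eqref{hy3.7} and \eqref{hy3.12}), the interpolation $\|\nabla e^{n+1}_{c_1}\|_{L^{4}}\le C\|\nabla e^{n+1}_{c_1}\|^{1/2}\|e^{n+1}_{c_1}\|_{H^{2}}^{1/2}$ (from \eqref{hy3.3}), and the elliptic-regularity bound $\|e^{n+1}_{c_1}\|_{H^{2}}\le C(\|e^{n+1}_{c_1}\|+\|\Delta e^{n+1}_{c_1}\|)$ — which follows from Lemma~\ref{nele} applied to $e^{n+1}_{c_1}$ minus its mean, since $\partial_{\bm{n}}e^{n+1}_{c_1}|_{\partial\Omega}=0$ — so that Young's inequality yields $\epsilon\|\Delta e^{n+1}_{c_1}\|^{2}+C_{\epsilon}(\|\nabla e^{n+1}_{c_1}\|^{2}+\|e^{n+1}_{c_1}\|^{2})$. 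The electromigration term from $\nabla\cdot(c^{n+1}_1\nabla e^{n}_{\phi})=\nabla c^{n+1}_1\cdot\nabla e^{n}_{\phi}+c^{n+1}_1\Delta e^{n}_{\phi}$: the piece $c^{n+1}_1\Delta e^{n}_{\phi}$ uses $\|c^{n+1}_1\|_{L^{\infty}}\le C$ (Lemma~\ref{le3.4}, \eqref{hy3.13} with $k=\infty$) and $\|\Delta e^{n}_{\phi}\|=\|e^{n}_{c_1}-e^{n}_{c_2}\|=O(\tau)$ by Lemma~\ref{le3.5}; writing $\nabla c^{n+1}_1=\nabla c_1(t^{n+1})-\nabla e^{n+1}_{c_1}$, the regular part uses $\|\nabla c_1(t^{n+1})\|_{L^{\infty}}\le C$ together with $\|\nabla e^{n}_{\phi}\|\le C\tau$, and the error part $\nabla e^{n+1}_{c_1}\cdot\nabla e^{n}_{\phi}$ uses $\|\nabla e^{n}_{\phi}\|_{L^{4}}\le C\|e^{n}_{\phi}\|_{H^{2}}^{1/2}\|\nabla e^{n}_{\phi}\|^{1/2}\le C\tau$ (via \eqref{hy3.24} and Lemma~\ref{le3.5}) and again the interpolation/elliptic-regularity bound for $\|\nabla e^{n+1}_{c_1}\|_{L^{4}}$. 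The other right-hand terms — $(e^{n}_{\bm{u}}\cdot\nabla c_1(t^{n+1}),\Delta e^{n+1}_{c_1})$, $(\nabla\cdot(e^{n+1}_{c_1}\nabla\phi(t^{n})),\Delta e^{n+1}_{c_1})$ (expand and use $\Delta\phi(t^{n})=-(c_1(t^{n})-c_2(t^{n}))\in L^{\infty}$), and $(R^{n+1}_{c_1},\Delta e^{n+1}_{c_1})$ — are routine Cauchy--Schwarz/Young estimates.

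To finish, I would multiply by $\tau$, sum over $n$ from $0$ to $\ell\le m$ (using $\nabla e^{0}_{c_1}=0$), so that the left-hand side dominates $\tfrac12\|\nabla e^{\ell+1}_{c_1}\|^{2}+\tfrac{\tau}{2}\sum_{n=0}^{\ell}\|\Delta e^{n+1}_{c_1}\|^{2}$, and bound the right-hand side by $C\tau^{2}$ using Lemma~\ref{le3.5} and the truncation bounds (a discrete Gronwall step via Lemma~\ref{le3.1} can be invoked if any self-referential $\tau\sum\|\nabla e^{n+1}_{c_i}\|^{2}$ survives, although it is already $O(\tau^{2})$). This gives $\max_{0\le n\le m}\|\nabla e^{n+1}_{c_1}\|^{2}+\tau\sum_{n=0}^{m}\|\Delta e^{n+1}_{c_1}\|^{2}\le C\tau^{2}$ and, identically, the same for $c_2$; combined with the $L^{2}$-estimate of Lemma~\ref{le3.5} and the elliptic-regularity bound above, this is exactly \eqref{hy3.45}. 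The threshold $\tau_2\le\tau_1$ is required so that Lemma~\ref{le3.5} is available. The main obstacle is the top-order bookkeeping in the convective and electromigration terms: each must be arranged so that, after Young's inequality, it contributes only an absorbable multiple of $\|\Delta e^{n+1}_{c_i}\|^{2}$ plus quantities already known to be $O(\tau^{2})$ — this is precisely where the $L^{\infty}$ bound on $c^{n+1}_i$ from Lemma~\ref{le3.4} and the two-dimensional interpolation/elliptic-regularity chain are indispensable.
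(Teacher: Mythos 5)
Your proposal is correct and follows essentially the same route as the paper: test \eqref{hy3.25}--\eqref{hy3.26} with $-\Delta e^{n+1}_{c_i}$, expand the electromigration terms by the product rule, control the convective and gradient-coupling terms via the two-dimensional interpolation $\|\cdot\|_{L^4}\leq C\|\cdot\|^{1/2}\|\cdot\|_{H^1}^{1/2}$ together with the $L^\infty$ and $W^{2,4}$ bounds of Lemma \ref{le3.4} and estimate \eqref{hy3.24}, absorb $\|\Delta e^{n+1}_{c_i}\|^2$ by Young's inequality, and close with the discrete Gronwall lemma and Lemma \ref{le3.5}. The only (immaterial) difference is the cross-term splitting of the electromigration error: the paper regroups it as $e^{n+1}_{c_1}\nabla\phi^n + c_1(t^{n+1})\nabla e^n_\phi$ before testing, whereas you keep $c^{n+1}_1\nabla e^n_\phi + e^{n+1}_{c_1}\nabla\phi(t^n)$ and handle the extra quadratic piece $\nabla e^{n+1}_{c_1}\cdot\nabla e^n_\phi$ using $\|\nabla e^n_\phi\|_{L^4}\leq C\tau$ from Lemma \ref{le3.5}, which works equally well.
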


\begin{proof}
Rewriting \eqref{hy3.25} and \eqref{hy3.26}, we have
\begin{eqnarray}
	\label{hy3.46} 
	\begin{aligned}
		\delta_t e^{n+1}_{c_1} - \Delta e^{n+1}_{c_1} + \bm{u}^n\cdot\nabla e^{n+1}_{c_1} + e^n_{\bm{u}}\cdot\nabla c_1(t^{n+1}) - \nabla e^{n+1}_1 \cdot \nabla\phi^n \\
	    - e^{n+1}_{c_1}\Delta\phi^n - \nabla c_1(t^{n+1}) \cdot \nabla e^n_{\phi} - c_1(t^{n+1})\Delta e^{n}_{\phi} = R^{n+1}_{c_1}, 
	 \end{aligned}\\
	\label{hy3.47} 
	\begin{aligned}
	\delta_t e^{n+1}_{c_2} - \Delta e^{n+1}_{c_2} + \bm{u}^n\cdot\nabla e^{n+1}_{c_2} + e^n_{\bm{u}}\cdot\nabla c_2(t^{n+1}) + \nabla e^{n+1}_2 \cdot \nabla\phi^n \\
    + e^{n+1}_{c_2}\Delta\phi^n + \nabla c_2(t^{n+1}) \cdot \nabla e^n_{\phi} + c_2(t^{n+1})\Delta e^{n}_{\phi} = R^{n+1}_{c_2},
    \end{aligned}
\end{eqnarray}
Taking the inner product of \eqref{hy3.46} with $-\Delta e^{n+1}_{c_1}$, we obtain
\begin{eqnarray}\label{hy3.48}
	\begin{aligned}
	& \frac{1}{2\tau}\left(\|\nabla e^{n+1}_{c_1}\|^2 - \|\nabla e^n_{c_1}\|^2 + \|\nabla e^{n+1}_{c_1} - \nabla e^n_{c_1}\|^2\right) + \|\Delta e^{n+1}_{c_1}\|^2 \\
	=& \big(\bm{u}^n\cdot\nabla e^{n+1}_{c_1}, \Delta e^{n+1}_{c_1}\big) + \big(e^n_{\bm{u}}\cdot\nabla c_1(t^{n+1}), \Delta e^{n+1}_{c_1}\big) - \big(\nabla e^{n+1}_{c_1}\cdot\nabla\phi^n, \Delta e^{n+1}_{c_1}\big) \\
	&  - \big(e^{n+1}_{c_1} \Delta\phi^n, \Delta e^{n+1}_{c_1}\big) - \big(\nabla c_1(t^{n+1})\cdot\nabla e^n_{\phi}, \Delta e^{n+1}_{c_1}\big) - \big(c_1(t^{n+1}) \Delta e^n_{\phi}, \Delta e^{n+1}_{c_1}\big) \\
	& + \big(R^{n+1}_{c_1}, \Delta e^{n+1}_{c_1}\big) =: \sum_{i=1}^{7}M^{n+1}_i.
	\end{aligned}
\end{eqnarray}
By using \eqref{hy3.3}, \eqref{hy3.12} and \eqref{hy3.14} with $k=2$, we can estimate the terms $M^{n+1}_1$, $M^{n+1}_3$ and $M^{n+1}_4$ as follows
\begin{eqnarray*}
	M^{n+1}_1 &\leq& \|\bm{u}^n\|_{L^4}\|\nabla e^{n+1}_{c_1}\|_{L^4}\|\Delta e^{n+1}_{c_1}\| \leq C\|\bm{u}^n\|^{\frac{1}{2}} \|\nabla\bm{u}^n\|^{\frac{1}{2}} \|\nabla e^{n+1}_{c_1}\|^{\frac{1}{2}} \|\Delta e^{n+1}_{c_1}\|^{\frac{3}{2}} \\
	&\leq& C\epsilon^{-1}\|\nabla e^{n+1}_{c_1}\|^2 + \epsilon\|\Delta e^{n+1}_{c_1}\|^2, \\
	M^{n+1}_3 &\leq& \|\nabla e^{n+1}_{c_1}\|_{L^4} \|\nabla\phi^n\|_{L^4} \|\Delta e^{n+1}_{c_1}\| \leq C\|\nabla e^{n+1}_{c_1}\|^{\frac{1}{2}} \|\phi^n\|_{W^{1,4}} \|\Delta e^{n+1}_{c_1}\|^{\frac{3}{2}} \\
	&\leq& C\epsilon^{-1}\|\nabla e^{n+1}_{c_1}\|^2 + \epsilon\|\Delta e^{n+1}_{c_1}\|^2, \\
	M^{n+1}_4 &\leq& \|e^{n+1}_{c_1}\|_{L^4}\|\Delta\phi^n\|_{L^4}\|\Delta e^{n+1}_{c_1}\| \leq C\epsilon^{-1}\|\nabla e^{n+1}_{c_1}\|^2 + \epsilon\|\Delta e^{n+1}_{c_1}\|^2.
\end{eqnarray*}
By using \eqref{hy3.24}, we have
\begin{eqnarray*}
	M^{n+1}_2 + M^{n+1}_5 + M^{n+1}_6 + M^{n+1}_7 &\leq& \big(\|e^n_{\bm{u}}\| \|\nabla c_1(t^{n+1})\|_{L^{\infty}} + \|\nabla c_1(t^{n+1})\|_{L^{\infty}} \|\nabla e^n_{\phi}\| \\
	&& + \|c_1(t^{n+1})\|_{L^{\infty}} \|\Delta e^n_{\phi}\| + \|R^{n+1}_{c_1}\|\big) \|\Delta e^{n+1}_{c_1}\| \\
	&\leq& C\epsilon^{-1}\big(\|e^n_{\bm{u}}\|^2 + \|e^n_{c_1}\|^2 + \|e^n_{c_2}\|^2 + \|R^{n+1}_{c_2}\|^2\big) + \epsilon\|\Delta e^{n+1}_{c_1}\|^2.
\end{eqnarray*}
Then together with \eqref{hy3.48} and choosing $\epsilon=1/8$, we obtain
\begin{eqnarray}\label{hy3.50}
	\begin{aligned}
	& \frac{1}{2\tau}\left(\|\nabla e^{n+1}_{c_1}\|^2 - \|\nabla e^n_{c_1}\|^2 + \|\nabla e^{n+1}_{c_1} - \nabla e^n_{c_1}\|^2\right) + \frac{1}{2}\|\Delta e^{n+1}_{c_1}\|^2 \\
	\leq& C\big(\|\nabla e^{n+1}_{c_1}\|^2 + \|e^n_{c_1}\|^2 + \|e^n_{c_2}\|^2 + \|e^n_{\bm{u}}\|^2 + \|R^{n+1}_{c_1}\|^2\big).
	\end{aligned}
\end{eqnarray}
Similarly, taking the inner product of \eqref{hy3.47} with $-\Delta e^{n+1}_{c_2}$, we obtain
\begin{eqnarray}\label{hy3.51}
	\begin{aligned}
	&\frac{1}{2\tau}\left(\|\nabla e^{n+1}_{c_2}\|^2 - \|\nabla e^n_{c_2}\|^2 + \|\nabla e^{n+1}_{c_2} - \nabla e^n_{c_2}\|^2\right) + \frac{1}{2}\|\Delta e^{n+1}_{c_2}\|^2 \\
	\leq& C\big(\|\nabla e^{n+1}_{c_2}\|^2 + \|e^n_{c_1}\|^2 + \|e^n_{c_2}\|^2 + \|e^n_{\bm{u}}\|^2 + \|R^{n+1}_{c_2}\|^2\big).
	\end{aligned}
\end{eqnarray}
Then combining \eqref{hy3.50} with \eqref{hy3.51} and summing up the resultant for $n$ from 0 to $\ell$ with $\ell\leq m$, we obtain
\begin{eqnarray}\label{hy3.52}
	\begin{aligned}
	& \max_{0 \leq n \leq \ell} \big(\|\nabla e^{n+1}_{c_1}\|^2 + \|\nabla e^{n+1}_{c_2}\|^2\big) + \tau\sum_{n=0}^{\ell}\big(\|\Delta e^{n+1}_{c_1}\|^2 + \|\Delta e^{n+1}_{c_2}\|^2\big) \\
	\leq& C_5\tau\sum_{n=0}^{\ell}\big( \|\nabla e^{n+1}_{c_1}\|^2 + \|\nabla e^{n+1}_{c_2}\|^2 + \|e^n_{c_1}\|^2 + \|e^n_{c_2}\|^2 + \|e^n_{\bm{u}}\|^2 + \|R^{n+1}_{c_1}\|^2 + \|R^{n+1}_{c_2}\|^2\big),
	\end{aligned}
\end{eqnarray}
for $\ell = 0,1,\ldots,m$ and for some positive constant $C_5$ independent of $\tau$. Hence, there exists a sufficiently small positive constant $\tau_2 \leq \tau_1$ satisfying $C_5\tau_2 \leq 1$ such that when $\tau \leq \tau_2$, we apply the discrete Gronwall lemma \ref{le3.1} and use \eqref{hy3.22} to obtain
\begin{eqnarray}\label{hy3.53}
	\begin{aligned}
	& \max_{0 \leq n \leq m} \big(\|\nabla e^{n+1}_{c_1}\|^2 + \|\nabla e^{n+1}_{c_2}\|^2\big) + \tau\sum_{n=0}^{m}\big(\|\Delta e^{n+1}_{c_1}\|^2 + \|\Delta e^{n+1}_{c_2}\|^2\big) \\
	\leq& C\tau\sum_{n=0}^{m}\big(\|e^n_{c_1}\|^2 + \|e^n_{c_2}\|^2 + \|e^n_{\bm{u}}\|^2 + \|R^{n+1}_{c_1}\|^2 + \|R^{n+1}_{c_2}\|^2\big) \leq C\tau^2,
	\end{aligned}
\end{eqnarray}
which implies the desired result \eqref{hy3.45}.
\end{proof}

\subsection{Proof of Theorem \ref{th3.3}}

To finish the proof of Theorem \ref{th3.3}, we need to prove that \eqref{hy3.12} is valid for $m=n+1$ with $0 \leq n \leq N-1$.

By the error estimates derived in the above Lemmas \ref{le3.4}-\ref{le3.6} in Subsections \ref{subse3.1} and \ref{subse3.2}, we have
\begin{eqnarray*}
	\max_{0 \leq n \leq m}\big(\|e^{n+1}_{c_1}\|_{H^2} + \|e^{n+1}_{c_2}\|_{H^1} + \|e^{n+1}_{\bm{u}}\|_{H^1}\big) \leq C\tau^{\frac{1}{2}}.
\end{eqnarray*}
Then we obtain
\begin{eqnarray*}
	\max_{0 \leq n \leq m}\big\{\|c^{n+1}_1\|_{L^{\infty}}, \|c^{n+1}_2\|_{L^{\infty}}, \|\bm{u}^{n+1}\|_{H^1}\big\} \leq (C_N - 1) + C\tau^{\frac{1}{2}} \leq C_N,
\end{eqnarray*}
for the sufficiently small positive constant $\tau_*$ with $\tau \leq \tau_* \leq \tau_2$ such that $C\tau^{\frac{1}{2}} \leq 1$, which completes the mathematical induction \eqref{hy3.12}. Hence the temporal error estimates in Theorem \ref{th3.3} follow from Lemmas \ref{le3.5}, \ref{le3.6}. This completes the proof of Theorem \ref{th3.3}.

Under the regular condition \eqref{hy3.9}, we derive from \eqref{hy3.10} in Theorem \ref{th3.3} that
\begin{eqnarray}
	\label{hy3.54} \max_{0 \leq n \leq N-1}\left(\|\delta_tc^{n+1}_1\|_{H^1} + \|\delta_tc^{n+1}_2\|_{H^1} + \|\delta_t\phi^{n+1}\|_{H^2} + \|\delta_t\bm{u}^{n+1}\|_{H^1} + \|p^{n+1}\|_{H^1}\right) \leq C,
\end{eqnarray}
which plays a key role in the following error estimate for the pressure $p$.

\subsection{Error estimates for the pressure}

To derive the optimal error estimate of pressure $p$, we first give the following first step error estimates:

\begin{lemma}\label{le3.7}
Under the assumption \eqref{hy3.9} in Theorem \ref{th3.3}, we have
\begin{eqnarray}\label{hy3.55}
	\|\delta_te^1_{c_1}\|^2 + \|\delta_te^1_{c_2}\|^2 + \|\delta_te^1_{\bm{u}}\|^2 + \tau^2\|\nabla\delta_te^1_p\|^2 \leq C\tau^2.
\end{eqnarray}
\end{lemma}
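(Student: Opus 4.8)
The plan is to exploit that every initial error vanishes. Since $c_1^0,c_2^0,\bm u^0$ are the exact initial data and $\phi^0$ solves the Poisson problem with exact right‑hand side (so $\phi^0=\phi(t^0)$, $r^0=r(t^0)$), and taking $p^0=p(t^0)$, we have $e^0_{c_1}=e^0_{c_2}=e^0_{\bm u}=e^0_\phi=e^0_p=e^0_r=0$. Hence $\delta_te^1_\star=e^1_\star/\tau$ for every unknown $\star$, so \eqref{hy3.55} is equivalent to
\[
\|e^1_{c_1}\|^2+\|e^1_{c_2}\|^2+\|e^1_{\bm u}\|^2+\tau^2\|\nabla e^1_p\|^2\le C\tau^4,
\]
which is sharper by one order in $\tau$ than the generic estimate of Lemma \ref{le3.5}. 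The extra order is produced by testing the $n=0$ error equations against $\delta_te^1_\star=e^1_\star/\tau$ instead of $e^1_\star$: the diffusion terms then contribute the nonnegative quantity $\tau^{-1}\|\nabla e^1_\star\|^2$ to the left side (useful to absorb $\|\nabla e^1_\star\|^2$-type terms, and yielding the byproduct $\|\nabla e^1_\star\|^2\le C\tau^3$), while the identity $\|e^1_\star\|=\tau\|\delta_te^1_\star\|$ converts one power of $\tau$ in each coupling term, turning typical cross terms $\|e^1_\star\|\,\|\delta_te^1_\star\|$ into $\tau\|\delta_te^1_\star\|^2$, absorbable for $\tau$ small. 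The first‑step truncation bounds $\|R^1_{c_j}\|,\|R^1_{\bm u}\|,\|R^1_p\|\le C\tau$ and, from Lemma \ref{le3.5} with $m=0$, $|e^1_r|\le C\tau$, are already available.

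\emph{Step 1 (concentrations).} Evaluating \eqref{hy3.25}--\eqref{hy3.26} at $n=0$ and using $e^0_{\bm u}=e^0_\phi=0$ reduces them to $\delta_te^1_{c_1}-\Delta e^1_{c_1}+\bm u^0\cdot\nabla e^1_{c_1}-\nabla\cdot(e^1_{c_1}\nabla\phi(t^0))=R^1_{c_1}$ and similarly for $c_2$. Testing with $\delta_te^1_{c_1}=e^1_{c_1}/\tau$: the diffusion term gives $\tau^{-1}\|\nabla e^1_{c_1}\|^2$; the convection term $\tau^{-1}(\bm u^0\cdot\nabla e^1_{c_1},e^1_{c_1})$ vanishes because $\nabla\cdot\bm u^0=0$; the drift term, after integration by parts and using $-\Delta\phi(t^0)=c_1(t^0)-c_2(t^0)$ together with $\|e^1_{c_1}\|=\tau\|\delta_te^1_{c_1}\|$, is bounded by $\tfrac{\tau}{2}\|c_1(t^0)-c_2(t^0)\|_{L^\infty}\|\delta_te^1_{c_1}\|^2$; and the truncation term by $\|R^1_{c_1}\|\,\|\delta_te^1_{c_1}\|$. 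For $\tau$ small these right‑hand contributions absorb into $\tfrac12\|\delta_te^1_{c_1}\|^2$, leaving $\|\delta_te^1_{c_1}\|^2+\tau^{-1}\|\nabla e^1_{c_1}\|^2\le C\|R^1_{c_1}\|^2\le C\tau^2$, and likewise for $c_2$. In particular $\|e^1_{c_j}\|^2\le C\tau^4$, and then \eqref{hy3.24} yields $\|\nabla e^1_\phi\|^2+\|\Delta e^1_\phi\|^2\le C\tau^4$.

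\emph{Step 2 (velocity and pressure).} Evaluating \eqref{hy3.30} at $n=0$: since $\bm u^0=\bm u(t^0)$, $c^0_j=c_j(t^0)$ and $\phi^0=\phi(t^0)$, the terms $J^1_2$ and $J^1_3$ vanish identically; $\|J^1_1\|\le C(\|e^1_{c_1}\|+\|e^1_{c_2}\|)\le C\tau^2$ by \eqref{hy3.32} and Step 1; $\|J^1_4\|=\|R^1_{\bm u}\|\le C\tau$; and the $e^1_r$-term on the left has $L^2$-norm $\le C|e^1_r|\le C\tau$ (using the boundedness of $1/\sqrt{E(\phi^1)}$ and of $\bm u^0,c^0_j,\phi^0$). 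Testing with $\delta_te^1_{\bar{\bm u}}=e^1_{\bar{\bm u}}/\tau$, the $\nabla e^0_p$-term drops ($e^0_p=0$), the diffusion term gives $\tau^{-1}\|\nabla e^1_{\bar{\bm u}}\|^2$, and every remaining right‑hand term absorbs into $\tfrac12\|\delta_te^1_{\bar{\bm u}}\|^2+C\tau^2$ (here $\|e^1_{\bar{\bm u}}\|=\tau\|\delta_te^1_{\bar{\bm u}}\|$ is used to rebalance powers of $\tau$), so $\|\delta_te^1_{\bar{\bm u}}\|^2\le C\tau^2$, hence $\|e^1_{\bar{\bm u}}\|^2\le C\tau^4$. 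Finally \eqref{hy3.35} at $n=0$ with $e^0_p=0$ reads $e^1_{\bm u}+\tau\nabla e^1_p=e^1_{\bar{\bm u}}+\tau R^1_p$; squaring both sides and using $\nabla\cdot e^1_{\bm u}=0$ to kill the cross term gives $\|e^1_{\bm u}\|^2+\tau^2\|\nabla e^1_p\|^2\le 2\|e^1_{\bar{\bm u}}\|^2+2\tau^2\|R^1_p\|^2\le C\tau^4$, i.e.\ $\|\delta_te^1_{\bm u}\|^2\le C\tau^2$ and $\tau^2\|\nabla\delta_te^1_p\|^2=\|\nabla e^1_p\|^2\le C\tau^2$. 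Adding the four bounds gives \eqref{hy3.55}.

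\emph{Main obstacle.} The delicate point is the gain of the extra order in $\tau$ over Lemma \ref{le3.5}. It works only because (i) every initial error is zero — in particular one must take $p^0=p(t^0)$ so that $e^0_p=0$; otherwise the term $(\nabla e^0_p,\delta_te^1_{\bar{\bm u}})$ cannot be made small (note $e^1_{\bar{\bm u}}$ is not divergence free, so it is not orthogonal to gradients) and spoils the sharp bound — and this is what legitimizes testing against $\delta_te^1_\star=e^1_\star/\tau$ and what makes $\|e^1_\star\|=\tau\|\delta_te^1_\star\|$ available; and (ii) the first‑step cancellations $J^1_2=J^1_3=0$, valid only because the explicit data $\bm u^0,c^0_j,\phi^0$ then coincide with the exact solution. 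The tedious part is the bookkeeping in Step 2: verifying that the $e^1_r$-term, $J^1_1$ and $J^1_4$ each split, via Young's inequality with $\|e^1_{\bar{\bm u}}\|=\tau\|\delta_te^1_{\bar{\bm u}}\|$, into a small multiple of $\|\delta_te^1_{\bar{\bm u}}\|^2$ plus an $O(\tau^2)$ remainder, using the boundedness of $1/\sqrt{E(\phi^1)}$, of $r(t^1)$, and the $\|\cdot\|_{L^\infty}$/$\|\cdot\|_{W^{2,\nu_k}}$ bounds for $\bm u^0,c^0_j,\phi^1$ furnished by the regularity \eqref{hy3.9}, Lemma \ref{le3.4} and the induction hypothesis \eqref{hy3.12}.
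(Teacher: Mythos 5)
Your proposal is correct and follows essentially the same route as the paper: evaluate the error equations at $n=0$, use that all initial errors vanish (so $\delta_te^1_\star=e^1_\star/\tau$ and $J^1_2=J^1_3=0$), test against $e^1_\star/\tau$, and close the pressure/velocity step by squaring the $n=0$ projection relation with $\nabla\cdot e^1_{\bm u}=0$ killing the cross term. The only cosmetic deviation is your handling of the drift term via $-\Delta\phi(t^0)=c_1(t^0)-c_2(t^0)$, where the paper instead absorbs it directly into the $\tau^{-1}\|\nabla e^1_{c_1}\|^2$ term on the left.
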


\begin{proof}
By taking $n=0$ in \eqref{hy3.25} and using $e^0_{c_1} = e^0_{\phi} = e^0_{\bm{u}} = 0$, we have
\begin{eqnarray}\label{hy3.56}
	\frac{1}{\tau}e^1_{c_1} - \Delta e^1_{c_1} + \bm{u}^0\cdot\nabla e^1_{c_1} - \nabla\cdot(e^1_{c_1}\nabla\phi(t^0)) = R^1_{c_1}.
\end{eqnarray}
Taking the inner product of \eqref{hy3.56} with $\frac{1}{\tau}e^1_{c_1}$, we obtain
\begin{eqnarray*}
	\|\frac{e^1_{c_1}}{\tau}\|^2 + \frac{1}{\tau}\|\nabla e^1_{c_1}\|^2 &=& -\frac{1}{\tau}\big(e^1_{c_1}\nabla\phi(t^0), \nabla e^1_{c_1}\big) + \frac{1}{\tau}(R^1_{c_1}, e^1_{c_1}) \\
	&\leq& C\big(\|\nabla e^1_{c_1}\|^2 + \|R^1_{c_1}\|^2\big) + \frac{1}{2}\|\frac{e^1_{c_1}}{\tau}\|^2,
\end{eqnarray*}
which leads that
\begin{eqnarray}\label{hy3.57}
	\|\delta_t e^1_{c_1}\|^2 + \frac{2}{\tau}\|\nabla e^1_{c_1}\|^2 \leq C\tau^2.
\end{eqnarray}
Similarly to \eqref{hy3.57}, we take $n=0$ in \eqref{hy3.26} and take the inner product of the resultant with $\frac{1}{\tau}e^1_{c_2}$ to obtain
\begin{eqnarray}\label{hy3.58}
	\|\delta_t e^1_{c_2}\|^2 + \frac{2}{\tau}\|\nabla e^1_{c_2}\|^2 \leq C\tau^2.
\end{eqnarray}

By taking $n=0$ in \eqref{hy3.30} and using $e^0_p = e^0_{\bm{u}} = 0$, we have
\begin{eqnarray}\label{hy3.59}
	\frac{1}{\tau}e^1_{\bar{\bm{u}}} - \Delta e^1_{\bar{\bm{u}}} + \frac{e^1_r}{\sqrt{E(\phi^1)}}\big(\bm{u}^0\cdot\nabla\bm{u}^0 + (c^0_1 - c^0_2)\nabla\phi^0\big) = J^1_1 + J^1_4.
\end{eqnarray}
Taking the inner product of \eqref{hy3.59} with $\frac{1}{\tau}e^1_{\bar{\bm{u}}}$, we have
\begin{eqnarray}\label{hy3.60}
	\begin{aligned}
	\|\frac{e^1_{\bar{\bm{u}}}}{\tau}\|^2 + \frac{1}{\tau}\|\nabla e^1_{\bar{\bm{u}}}\|^2 =& -\frac{e^1_r}{\tau\sqrt{E(\phi^1)}}\big(\bm{u}^0\cdot\nabla\bm{u}^0 + (c^0_1 - c^0_2)\nabla\phi^0, e^1_{\bar{\bm{u}}}\big) + \frac{1}{\tau}\big(J^1_1 + J^1_4, e^1_{\bar{\bm{u}}}\big) \\
	\leq& C\big(\|e^1_{c_1}\|^2 + \|e^1_{c_2}\|^2 + |e^1_r|^2 + \|R^1_{\bm{u}}\|^2\big) + \frac{1}{8}\|\frac{e^1_{\bar{\bm{u}}}}{\tau}\|^2.
	\end{aligned}
\end{eqnarray}
By taking $n=0$ in \eqref{hy3.33} and using $e^0_p=0$, we have
\begin{eqnarray}\label{hy3.61}
	\frac{1}{\tau}e^1_{\bm{u}} + \nabla e^1_p = \frac{1}{\tau} e^1_{\bar{\bm{u}}} + R^1_p.
\end{eqnarray}
Taking the inner product of \eqref{hy3.61} with itself on both sides, we have
\begin{eqnarray}\label{hy3.62}
	\frac{1}{2}\Big(\|\frac{e^1_{\bm{u}}}{\tau}\|^2 - \|\frac{e^1_{\bar{\bm{u}}}}{\tau}\|^2\Big) + \frac{1}{2}\|\nabla e^1_p\|^2 = \frac{1}{2}\|R^1_p\|^2 + \frac{1}{\tau}(e^1_{\bar{\bm{u}}}, R^1_p)  \leq C\|R^1_p\|^2 + \frac{1}{8}\|\frac{e^1_{\bar{\bm{u}}}}{\tau}\|^2. 
\end{eqnarray}
Together \eqref{hy3.60} with \eqref{hy3.62} and using $e^0_{\bm{u}} = e^0_p = 0$, we obtain 
\begin{eqnarray}\label{hy3.63}
	\|\delta_t e^1_{\bm{u}}\|^2 + \frac{1}{2}\|\frac{e^1_{\bar{\bm{u}}}}{\tau}\|^2 + \frac{2}{\tau}\|\nabla e^1_{\bar{\bm{u}}}\|^2 + \tau^2\|\nabla\delta_te^1_p\|^2 \leq C\tau^2.
\end{eqnarray}
Combining \eqref{hy3.57}, \eqref{hy3.58} and \eqref{hy3.63}, it leads to the desired result \eqref{hy3.55}.
\end{proof}

\begin{lemma}\label{le3.8}
Under the assumption \eqref{hy3.9} in Theorem \ref{th3.3}, there exists a sufficiently small constant $\tau_{**} \leq \tau_*$ such that when $\tau \leq \tau_{**}$, we have 
\begin{eqnarray}\label{hy3.64}
	\begin{aligned}
	\max_{0 \leq n \leq N-1}\big( \|\delta_te^{n+1}_{c_1}\|^2 + \|\delta_te^{n+1}_{c_2}\|^2 + \|\delta_te^{n+1}_{\bm{u}}\|^2 
	+ \tau^2\|\delta_te^{n+1}_p\|^2_{H^1}\big) \\
	+ \tau\sum_{n=0}^{N-1}\big( \|\delta_te^{n+1}_{c_1}\|^2_{H^1} + \|\delta_te^{n+1}_{c_2}\|^2_{H^1} + \|\delta_t e^{n+1}_{\bar{\bm{u}}}\|^2_{H^1}\big) \leq C\tau^2.
	\end{aligned}
\end{eqnarray}
\end{lemma}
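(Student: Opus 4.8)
The plan is to run the five-step energy argument of Lemma~\ref{le3.5} again, but on the \emph{time-differenced} error equations. Concretely, I would subtract \eqref{hy3.25}, \eqref{hy3.26}, \eqref{hy3.30}, \eqref{hy3.33}--\eqref{hy3.34} and \eqref{hy3.38} at two consecutive levels $n$ and $n+1$ and divide by $\tau$, obtaining evolution equations for $\delta_te^{n+1}_{c_1}$, $\delta_te^{n+1}_{c_2}$, $\delta_te^{n+1}_{\bar{\bm{u}}}$, $\delta_te^{n+1}_{\bm{u}}$, $\delta_te^{n+1}_r$ and $\delta_te^{n+1}_p$, valid for $1\leq n\leq N-1$, whose right-hand sides now involve the differenced truncation errors $\delta_tR^{n+1}_{c_1},\delta_tR^{n+1}_{c_2},\delta_tR^{n+1}_{\bm{u}},\delta_tR^{n+1}_p,\delta_tR^{n+1}_r$, all satisfying $\tau\sum_n\|\delta_tR^{n+1}\|^2\leq C\tau^2$ under the regularity \eqref{hy3.9} (these bounds are already recorded just after \eqref{hy3.26}, \eqref{hy3.30} and \eqref{hy3.33}). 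Differencing \eqref{hy3.23} gives $-\Delta\delta_te^{n+1}_{\phi}=\delta_te^{n+1}_{c_1}-\delta_te^{n+1}_{c_2}$, so, exactly as in \eqref{hy3.24}, $\|\nabla\delta_te^{n+1}_{\phi}\|^2+\|\Delta\delta_te^{n+1}_{\phi}\|^2\leq C(\|\delta_te^{n+1}_{c_1}\|^2+\|\delta_te^{n+1}_{c_2}\|^2)$.

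Then I would test the differenced \eqref{hy3.25}, \eqref{hy3.26} with $\delta_te^{n+1}_{c_1}$, $\delta_te^{n+1}_{c_2}$; test the differenced \eqref{hy3.30} with $\delta_te^{n+1}_{\bar{\bm{u}}}$; square the differenced \eqref{hy3.33} (using $\nabla\cdot\delta_te^{n+1}_{\bm{u}}=0$, so that $(\nabla\delta_te^{n+1}_p,\delta_te^{n+1}_{\bm{u}})=0$) to trade $\|\delta_te^{n+1}_{\bm{u}}\|^2+\tau^2\|\nabla\delta_te^{n+1}_p\|^2$ against $\|\delta_te^{n+1}_{\bar{\bm{u}}}\|^2+\tau^2\|\nabla\delta_te^{n}_p\|^2$ as in \eqref{hy3.35}--\eqref{hy3.36}; and multiply the differenced \eqref{hy3.38} by $2\delta_te^{n+1}_r$. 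The crucial structural point, inherited from Step~5 of Lemma~\ref{le3.5}, is that the non-dissipative nonlinear/coupling contributions that survive on the right-hand side of the velocity estimate and on the right-hand side of the $r$-estimate still cancel after applying $\delta_t$: the leading term is $\mp\frac{\delta_te^{n+1}_r}{\sqrt{E(\phi^{n+1})}}\bigl[(\bm{u}^n\cdot\nabla\bm{u}^n,\delta_te^{n+1}_{\bar{\bm{u}}})+((c^n_1-c^n_2)\nabla\phi^n,\delta_te^{n+1}_{\bar{\bm{u}}})\bigr]$, which appears with opposite signs in the two estimates and drops out when they are added. Adding the four estimates, using \eqref{hy3.22}, \eqref{hy3.43}, \eqref{hy3.45} and \eqref{hy3.54} to absorb the remaining right-hand side, and carrying $|\delta_te^{n+1}_r|^2$ as an auxiliary unknown, one arrives at a discrete-Gronwall-type inequality for $\|\delta_te^{n+1}_{c_1}\|^2+\|\delta_te^{n+1}_{c_2}\|^2+\|\delta_te^{n+1}_{\bm{u}}\|^2+\tau^2\|\nabla\delta_te^{n+1}_p\|^2+|\delta_te^{n+1}_r|^2$ with dissipation $\tau\|\nabla\delta_te^{n+1}_{c_1}\|^2+\tau\|\nabla\delta_te^{n+1}_{c_2}\|^2+\tau\|\nabla\delta_te^{n+1}_{\bar{\bm{u}}}\|^2$ and forcing $\leq C\tau^2$. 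Summing from $n=1$ to $\ell\leq N-1$, the initial datum $\|\delta_te^{1}_{c_1}\|^2+\|\delta_te^{1}_{c_2}\|^2+\|\delta_te^{1}_{\bm{u}}\|^2+\tau^2\|\nabla\delta_te^{1}_p\|^2\leq C\tau^2$ is provided by Lemma~\ref{le3.7}, so the discrete Gronwall Lemma~\ref{le3.1} applied for $\tau\leq\tau_{**}\leq\tau_*$ small enough yields \eqref{hy3.64}; the $H^1$ bound on $\delta_te^{n+1}_p$ follows from Poincar\'e applied to $\nabla\delta_te^{n+1}_p$ (the pressure error has zero mean), and the $H^1$ bound on $\delta_te^{n+1}_{\bar{\bm{u}}}$ is the viscous dissipation term.

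The step I expect to be the main obstacle is the bookkeeping for the differenced nonlinear and coupling terms. Each such term must be expanded with the discrete product rule $\delta_t(a^nb^n)=(\delta_ta^n)b^n+a^{n-1}(\delta_tb^n)$ and then bounded: $\delta_t(\bm{u}^n\cdot\nabla\bm{u}^n)$, $\delta_t((c^n_1-c^n_2)\nabla\phi^n)$, $\delta_t(c^{n+1}_i\nabla\phi^n)$, as well as $\delta_t\bigl(r^{n+1}/\sqrt{E(\phi^{n+1})}\bigr)$ and $\delta_t\bigl(1/\sqrt{E(\phi^{n+1})}\bigr)$, are controlled using \eqref{hy3.54} (which supplies uniform bounds on $\|\delta_t\bm{u}^{n+1}\|_{H^1}$, $\|\delta_tc^{n+1}_i\|_{H^1}$, $\|\delta_t\phi^{n+1}\|_{H^2}$ and $\|p^{n+1}\|_{H^1}$), together with the induction hypothesis \eqref{hy3.12}, the $L^{\nu_k}$/$W^{2,\nu_k}$ bounds of Lemma~\ref{le3.4}, the trilinear estimates \eqref{hy3.5}, the Sobolev inequalities \eqref{hy3.3}--\eqref{hy3.4}, and the two-dimensional embeddings $H^1\hookrightarrow L^4$, $H^2\hookrightarrow W^{1,4}\cap L^{\infty}$; the scalar factors and their differences are handled via $E(\phi)\geq C_0\geq 1$, \eqref{hy3.7} and the argument of \eqref{hy3.32}. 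A second subtlety, also as in Lemma~\ref{le3.5}, is that the $\delta_te^{n+1}_r$ estimate must be kept coupled to the velocity estimate and not closed on its own, since the non-dissipative terms can only be controlled through their mutual cancellation. Apart from these points the computation is a routine reprise of the proof of Lemma~\ref{le3.5}.
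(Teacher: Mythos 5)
Your proposal is essentially correct and, in Steps 1--3 (differencing the Poisson, concentration, velocity and pressure-update error equations and re-running the energy argument, with the discrete product rule and the bounds \eqref{hy3.54}, \eqref{hy3.12}, Lemma \ref{le3.4}, \eqref{hy3.3}--\eqref{hy3.5} controlling the differenced nonlinearities), it coincides with the paper's proof. The one place where you genuinely diverge is the treatment of $e_r$. You propose to difference \eqref{hy3.38}, multiply by $2\delta_te^{n+1}_r$, and rely on the cancellation of $\mp\frac{\delta_te^{n+1}_r}{\sqrt{E(\phi^{n+1})}}\big[(\bm{u}^n\cdot\nabla\bm{u}^n,\delta_te^{n+1}_{\bar{\bm{u}}})+((c^n_1-c^n_2)\nabla\phi^n,\delta_te^{n+1}_{\bar{\bm{u}}})\big]$ between the two estimates, carrying $|\delta_te^{n+1}_r|^2$ as a Gronwall unknown. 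The paper instead never differences the $r$-equation: since \eqref{hy3.38} is a scalar relation expressing $\delta_te^{n+1}_r$ directly as a sum of terms already known to be first-order small by Lemma \ref{le3.5}, it reads off the pointwise bound \eqref{hy3.77}, $|\delta_te^{n+1}_r|\le C(\|e^{n+1}_{c_1}\|+\|e^{n+1}_{c_2}\|+\|e^{n}_{c_1}\|+\|e^{n}_{c_2}\|+\|\nabla e^{n+1}_{\bar{\bm{u}}}\|+\|e^n_{\bm{u}}\|+|e^{n+1}_r|+|R^{n+1}_r|)$, and simply inserts $|\delta_te^{n+1}_r|^2$ into the right-hand side of the differenced velocity estimate \eqref{hy3.76}, where it is summable to $C\tau^2$ via \eqref{hy3.22}. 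No cancellation at the differenced level is needed. The paper's route is shorter; yours preserves the symmetry with Step 5 of Lemma \ref{le3.5} but costs more bookkeeping.

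Two points in your version need attention. First, the cancellation you invoke is only exact if you choose the discrete product-rule splitting consistently on both sides: differencing $\frac{e^{n+1}_r}{\sqrt{E(\phi^{n+1})}}F^n$ in the velocity equation produces the factor $\frac{\delta_te^{n+1}_r}{\sqrt{E(\phi^{n+1})}}(F^n,\cdot)$, whereas a naive splitting of $\delta_t\big(\frac{1}{2\sqrt{E(\phi^{n+1})}}(F^n,e^{n+1}_{\bar{\bm{u}}})\big)$ produces $\frac{1}{2\sqrt{E(\phi^{n})}}(F^{n-1},\delta_te^{n+1}_{\bar{\bm{u}}})$ with mismatched indices; you must arrange the splitting so that the surviving leading term is $\frac{1}{2\sqrt{E(\phi^{n+1})}}(F^{n},\delta_te^{n+1}_{\bar{\bm{u}}})$, or else estimate the $O(\tau)$ residual separately. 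Second, since you carry $|\delta_te^{n+1}_r|^2$ in the telescoping sum (which starts at $n=1$, the differenced equations requiring level $n-1$), you need the initial datum $|\delta_te^{1}_r|^2\le C\tau^2$, which Lemma \ref{le3.7} does not supply. It can be obtained by evaluating \eqref{hy3.38} at $n=0$ and using $\|\nabla e^1_{\bar{\bm{u}}}\|\le C\tau^{3/2}$ from \eqref{hy3.63} together with absorbing the $C|e^1_r|=C\tau|\delta_te^1_r|$ term for small $\tau$ --- but this is exactly the paper's device \eqref{hy3.77} in disguise, which suggests the paper's shortcut is the more natural one. With these two repairs your argument closes.
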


\begin{proof}
We shall follow four steps.

{\bf Step 1.} Taking the difference of two consecutive steps in \eqref{hy3.23}, we have
\begin{eqnarray}\label{hy3.65}
	-\Delta\delta_te^{n+1}_{\phi} = \delta_te^{n+1}_{c_1} - \delta_te^{n+1}_{c_2}.
\end{eqnarray}
Then taking the inner product of \eqref{hy3.65} with $\delta_te^{n+1}_{\phi}$ and $-\Delta\delta_te^{n+1}_{\phi}$, we have
\begin{eqnarray*}
	&& \|\nabla\delta_te^{n+1}_{\phi}\|^2 \leq \big(\|\delta_te^{n+1}_{c_1}\| + \|\delta_te^{n+1}_{c_2}\|\big)\|\delta_te^{n+1}_{\phi}\|, \\ 
	&& \|\Delta\delta_te^{n+1}_{\phi}\|^2 \leq \big(\|\delta_te^{n+1}_{c_1}\| + \|\delta_te^{n+1}_{c_2}\|\big)\|\Delta\delta_te^{n+1}_{\phi}\|,
\end{eqnarray*}
which imply that
\begin{eqnarray}\label{hy3.66}
	\|\nabla\delta_te^{n+1}_{\phi}\|^2 + \|\Delta\delta_te^{n+1}_{\phi}\|^2 \leq C\big(\|\delta_te^{n+1}_{c_1}\|^2 + \|\delta_te^{n+1}_{c_2}\|^2\big).
\end{eqnarray}

{\bf Step 2.} Taking the difference of two consecutive steps in \eqref{hy3.25} and in \eqref{hy3.26} respectively, we have
\begin{eqnarray}
	\label{hy3.67} 
	\begin{aligned}
	&\delta^2_te^{n+1}_{c_1} - \Delta\delta_te^{n+1}_{c_1} + \bm{u}^n\cdot\nabla\delta_te^{n+1}_{c_1} + \delta_t\bm{u}^n\cdot\nabla e^n_{c_1} + e^n_{\bm{u}}\cdot\nabla\delta_tc_1(t^{n+1}) + \delta_te^n_{\bm{u}}\cdot\nabla c_1(t^n) \\
	&  - \nabla\cdot(\delta_tc^{n+1}_1\nabla e^n_{\phi}) - \nabla\cdot(c^n_1\nabla\delta_te^n_{\phi}) - \nabla\cdot(\delta_te^{n+1}_{c_1}\nabla\phi(t^n)) - \nabla\cdot(e^n_{c_1}\nabla\delta_t\phi(t^n)) = \delta_tR^{n+1}_{c_1}, 
	\end{aligned}\\
	\label{hy3.68} 
	\begin{aligned}
	& \delta^2_te^{n+1}_{c_2} - \Delta\delta_te^{n+1}_{c_2} + \bm{u}^n\cdot\nabla\delta_te^{n+1}_{c_2} + \delta_t\bm{u}^n\cdot\nabla e^n_{c_2} + e^n_{\bm{u}}\cdot\nabla\delta_tc_2(t^{n+1}) + \delta_te^n_{\bm{u}}\cdot\nabla c_2(t^n) \\
	&  - \nabla\cdot(\delta_tc^{n+1}_2\nabla e^n_{\phi}) - \nabla\cdot(c^n_2\nabla\delta_te^n_{\phi}) - \nabla\cdot(\delta_te^{n+1}_{c_2}\nabla\phi(t^n)) - \nabla\cdot(e^n_{c_2}\nabla\delta_t\phi(t^n)) = \delta_tR^{n+1}_{c_2}.
	\end{aligned}
\end{eqnarray}
Taking the inner product of \eqref{hy3.67} with $\delta_te^{n+1}_{c_1}$, we have
\begin{eqnarray}\label{hy3.69}
	\begin{aligned}
	& \frac{1}{2\tau}\big(\|\delta_te^{n+1}_{c_1}\|^2 - \|\delta_te^{n}_{c_1}\|^2 + \|\delta_te^{n+1}_{c_1} - \delta_te^n_{c_1}\|^2\big) + \|\nabla\delta_te^{n+1}_{c_1}\|^2 \\
	=& - \big(\delta_t\bm{u}^n\cdot\nabla e^n_{c_1}, \delta_te^{n+1}_{c_1}\big) - \big(e^n_{\bm{u}}\cdot\nabla\delta_tc_1(t^{n+1})\big) - \big(\delta_te^n_{\bm{u}}\cdot\nabla c_1(t^n), \delta_te^{n+1}_{c_1}\big)  \\
	& - \big(\delta_tc^{n+1}_1\nabla e^n_{\phi}, \delta_te^{n+1}_{c_1}\big) - \big(c^n\nabla\delta_te^n_{\phi}, \delta_te^{n+1}_{c_1}\big) - \big(\delta_te^{n+1}_{c_1}\nabla\phi(t^n), \delta_te^{n+1}_{c_1}\big) \\
	& - \big(e^n_{c_1}\nabla\delta_t\phi(t^n), \delta_te^{n+1}_{c_1}\big) + \big(R^{n+1}_{c_1}, \delta_te^{n+1}_{c_1}\big) =: \sum_{i=1}^8Q^{n+1}_i.
	\end{aligned}
\end{eqnarray}
By using \eqref{hy3.3}, \eqref{hy3.24}, \eqref{hy3.54} and \eqref{hy3.66}, we can estimate the terms $Q^{n+1}_i$ as follows
\begin{eqnarray*}
	Q^{n+1}_1 &\leq& \|\delta_t\bm{u}^n\|_{L^4} \|\nabla e^n_{c_1}\| \|\delta_te^{n+1}_{c_1}\|_{L^4} \leq C\epsilon^{-1}\|\nabla e^n_{c_1}\|^2 + \epsilon\|\nabla\delta_te^{n+1}_{c_1}\|^2,\\
	Q^{n+1}_2 + Q^{n+1}_3 &\leq& \big(\|e^n_{\bm{u}}\|\|\nabla\delta_tc_1(t^{n+1})\|_{L^{\infty}} + \|\delta_te^n_{\bm{u}}\|\|\nabla c_1(t^n)\|_{L^{\infty}}\big) \|\delta_te^{n+1}_{c_1}\| \\
	&\leq& C\epsilon^{-1}\big(\|\delta_te^n_{\bm{u}}\|^2 + \|e^n_{\bm{u}}\|^2\big) + \epsilon\|\nabla\delta_t e^{n+1}_{c_1}\|^2,\\
	Q^{n+1}_4 + Q^{n+1}_5 &\leq& \big(\|\delta_tc^{n+1}_1\|_{L^4}\|\nabla e^n_{\phi}\| + \|c^n_1\|_{L^4}\|\nabla\delta_te^n_{\phi}\|\big)\|\delta_te^{n+1}_{c_1}\|_{L^4} \\
	&\leq& C\epsilon^{-1}\big(\|\delta_te^n_{c_1}\|^2 + \|\delta_te^n_{c_2}\|^2 + \|e^n_{c_1}\|^2 + \|e^n_{c_2}\|^2\big) + \epsilon\|\nabla\delta_te^{n+1}_{c_1}\|^2,\\
	Q^{n+1}_6 + Q^{n+1}_7 + Q^{n+1}_8 &\leq& \big(\|\delta_te^{n+1}_{c_1}\|\|\nabla\phi(t^n)\|_{L^{\infty}} + \|e^n_{c_1}\|\|\nabla\delta_t\phi(t^n)\|_{L^{\infty}} + \|\delta_tR^{n+1}_{c_1}\|\big) \|\delta_te^{n+1}_{c_1}\| \\
	&\leq& C\epsilon^{-1}\big(\|\delta_te^{n+1}_{c_1}\|^2 + \|e^n_{c_1}\|^2 + \|\delta_tR^{n+1}_{c_1}\|^2\big) + \epsilon\|\nabla\delta_te^{n+1}_{c_1}\|^2.
\end{eqnarray*}
Then together with \eqref{hy3.69} and choosing $\epsilon=1/8$, we have
\begin{eqnarray}\label{hy3.70}
	\begin{aligned}
	\frac{1}{2\tau}\big(\|\delta_te^{n+1}_{c_1}\|^2 - \|\delta_te^{n}_{c_1}\|^2\big) + \frac{1}{2}\|\nabla\delta_te^{n+1}_{c_1}\|^2 
	\leq C\big( \|\delta_te^{n+1}_{c_1}\|^2 + \|\delta_te^n_{c_1}\|^2 + \|\delta_te^n_{c_2}\|^2  \\ 
	 + \|\delta_te^n_{\bm{u}}\|^2 + \|e^n_{c_1}\|^2 + \|e^n_{c_2}\|^2 + \|\nabla e^n_{c_1}\|^2 + \|\delta_tR^{n+1}_{c_1}\|^2\big).
	\end{aligned}
\end{eqnarray}
Similarly, taking the inner product of \eqref{hy3.68} with $\delta_te^{n+1}_{c_2}$, we have
\begin{eqnarray}\label{hy3.71}
	\begin{aligned}
	\frac{1}{2\tau}\big(\|\delta_te^{n+1}_{c_2}\|^2 - \|\delta_te^{n}_{c_2}\|^2 \big) + \frac{1}{2}\|\nabla\delta_te^{n+1}_{c_2}\|^2 
	\leq C\big( \|\delta_te^{n+1}_{c_2}\|^2 + \|\delta_te^n_{c_1}\|^2 + \|\delta_te^n_{c_2}\|^2  \\
	+ \|\delta_te^n_{\bm{u}}\|^2 + \|e^n_{c_1}\|^2 + \|e^n_{c_2}\|^2 + \|\nabla e^n_{c_2}\|^2 + \|\delta_tR^{n+1}_{c_2}\|^2\big).
	\end{aligned}
\end{eqnarray}

{\bf Step 3.} Taking the difference of two consecutive steps in \eqref{hy3.30}, we have
\begin{eqnarray}\label{hy3.72}
	\begin{aligned}
	& \frac{\delta_te^{n+1}_{\bar{\bm{u}}} - \delta_te^n_{\bm{u}}}{\tau} - \Delta\delta_te^{n+1}_{\bar{\bm{u}}} + \nabla\delta_te^n_p \\
	=& - \delta_t\Big(\frac{e^{n+1}_r}{\sqrt{E(\phi^{n+1})}}\big(\bm{u}^n\cdot\nabla\bm{u}^n + (c^n_1-c^n_2)\nabla\phi^n\Big) + \sum_{i=1}^4\delta_tJ^{n+1}_i.
	\end{aligned}
\end{eqnarray}
Then taking the inner product of \eqref{hy3.72} with $\delta_te^{n+1}_{\bar{\bm{u}}}$, we have
\begin{eqnarray}\label{hy3.73}
	\begin{aligned}
	& \frac{1}{2\tau}\big(\|\delta_te^{n+1}_{\bar{\bm{u}}}\|^2 - \|\delta_te^n_{\bm{u}}\|^2 + \|\delta_te^{n+1}_{\bar{\bm{u}}} - \delta_te^n_{\bm{u}}\|^2\big) + \|\nabla\delta_te^{n+1}_{\bar{\bm{u}}}\|^2 + \big(\nabla\delta_te^n_p, \delta_te^{n+1}_{\bar{\bm{u}}}\big) \\
	=& - \left( \delta_t\Big(\frac{e^{n+1}_r}{\sqrt{E(\phi^{n+1})}}\big(\bm{u}^n\cdot\nabla\bm{u}^n + (c^n_1-c^n_2)\nabla\phi^n\big)\Big), \delta_te^{n+1}_{\bar{\bm{u}}} \right) + \sum_{i=1}^4\big(\delta_tJ^{n+1}_i, \delta_te^{n+1}_{\bar{\bm{u}}}\big).
	\end{aligned}
\end{eqnarray}
We estimate the first term of the right hand side of \eqref{hy3.73} from \eqref{hy3.12}, \eqref{hy3.14} and \eqref{hy3.54} that
\begin{eqnarray*}
	&& \left( \delta_t\Big(\frac{e^{n+1}_r}{\sqrt{E(\phi^{n+1})}}\big(\bm{u}^n\cdot\nabla\bm{u}^n + (c^n_1-c^n_2)\nabla\phi^n\big)\Big), \delta_te^{n+1}_{\bar{\bm{u}}} \right) \\
	&=& \Big(\frac{\delta_te^{n+1}_r}{\sqrt{E(\phi^{n+1})}} + \frac{\delta_tE(\phi^{n+1}) e^{n+1}_r}{\sqrt{E(\phi^{n+1})E(\phi^{n})\big(E(\phi^{n+1}) + E(\phi^{n})\big)}} \Big) \big( \bm{u}^n\cdot\nabla\bm{u}^n + (c^n_1-c^n_2)\nabla\phi^n, \delta_te^{n+1}_{\bar{\bm{u}}}\big) \\
	&& + \frac{e^n_r}{\sqrt{E(\phi^n)}} \big( \delta_t\bm{u}^n\cdot\bm{u}^n + \bm{u}^{n-1}\cdot\nabla\delta_t\bm{u}^n + (\delta_tc^n_1 - \delta_tc^n_2)\nabla\phi^n + (c^{n-1}_1 - c^{n-1}_2)\nabla\delta_t\phi^n, \delta_te^{n+1}_{\bar{\bm{u}}}\big) \\
	&\leq& C\epsilon^{-1}\big(|\delta_te^{n+1}_r|^2 + |e^{n+1}_r|^2 + |e^n_r|^2\big) + \epsilon\|\nabla\delta_t e^{n+1}_{\bar{\bm{u}}}\|^2,
\end{eqnarray*}
and 
\begin{eqnarray*}
	&& \big(\delta_tJ^{n+1}_1, \delta_te^{n+1}_{\bar{\bm{u}}}\big) \\
	&=& \Big(\delta_t\Big(\frac{1}{\sqrt{E(\phi^{n+1})}} - \frac{1}{\sqrt{E(\phi(t^{n+1}))}}\Big) r(t^{n+1})\big(\bm{u}(t^n)\cdot\nabla\bm{u}(t^n) \\
	&& + (c_1(t^n) - c_2(t^n))\nabla\phi(t^n)\big), \delta_te^{n+1}_{\bar{\bm{u}}} \Big) + \Big( \Big(\frac{1}{\sqrt{E(\phi^{n})}} - \frac{1}{\sqrt{E(\phi(t^{n}))}}\Big) \\
	&& \times \delta_t\big(r(t^{n+1})\big(\bm{u}(t^n)\cdot\nabla\bm{u}(t^n) + (c_1(t^n) - c_2(t^n))\nabla\phi(t^n)\big)\big), \delta_te^{n+1}_{\bar{\bm{u}}} \Big) \\
	&\leq& C\big(\|\nabla\delta_te^{n+1}_{\phi}\| + \|\nabla e^{n+1}_{\phi}\| + \|\nabla e^{n}_{\phi}\|\big)\|\delta_te^{n+1}_{\bar{\bm{u}}}\| \\
	&\leq& C\epsilon^{-1}\big(\|\delta_te^{n+1}_{c_1}\|^2 + \|\delta_te^{n+1}_{c_2}\|^2 + \|e^{n+1}_{c_1}\|^2 + \|e^{n+1}_{c_2}\|^2 + \|e^{n}_{c_1}\|^2 + \|e^{n}_{c_2}\|^2\big) + \epsilon\|\nabla\delta_t e^{n+1}_{\bar{\bm{u}}}\|^2,
\end{eqnarray*}
where we use 
\begin{eqnarray*}
	&& \delta_tE(\phi^{n+1}) \leq \|\nabla\delta_t\phi^{n+1}\| (\|\nabla\phi^{n+1}\| + \|\nabla\phi^n\|) \leq C, \\
	&& \delta_t(E(\phi(t^{n+1})) - E(\phi^{n+1})) \leq \big(\|\nabla e^{n+1}_{\phi}\| + \|\nabla e^n_{\phi}\|\big) \|\nabla\delta_t\phi(t^{n+1})\| + \|\nabla\delta_te^{n+1}_{\phi}\| (\|\nabla\phi^{n+1}\| + \|\nabla\phi^n\|) \\
	&& \leq C\big(\|\nabla\delta_te^{n+1}_{\phi}\| + \|\nabla e^{n+1}_{\phi}\| + \|\nabla e^n_{\phi}\| \big).
\end{eqnarray*}
By using \eqref{hy3.5} and \eqref{hy3.44}, we can estimate the third term as follows
\begin{eqnarray*}
	&& \big(\delta_tJ^{n+1}_2, \delta_te^{n+1}_{\bar{\bm{u}}}\big) \\
	&=& - \Big( \Big(\frac{\delta_tr(t^{n+1})}{\sqrt{E(\phi^{n+1})}} + \frac{\delta_tE(\phi^{n+1}) r(t^n)}{\sqrt{E(\phi^{n+1})E(\phi^{n})\big(E(\phi^{n+1}) + E(\phi^{n})\big)}}\Big) \\
	&& \times \big(\bm{u}(t^n)\cdot\nabla\bm{u}(t^n) - \bm{u}^n\cdot\nabla\bm{u}^n \big), \delta_te^{n+1}_{\bar{\bm{u}}}\Big) - \frac{r(t^n)}{\sqrt{E(\phi^n)}}\big( \delta_t\bm{u}(t^n)\cdot\nabla\bm{u}(t^n) - \delta_t\bm{u}^n\cdot\nabla\bm{u}^n \\
	&& + \bm{u}(t^{n-1})\cdot\nabla\delta_t\bm{u}(t^n) - \bm{u}^{n-1}\cdot\nabla\delta_t\bm{u}^n, \delta_te^{n+1}_{\bar{\bm{u}}}\big) \\
	&\leq& C\epsilon^{-1}\big(\|\delta_te^n_{\bm{u}}\|^2 + \|e^n_{\bm{u}}\|^2\big) + \epsilon\|\nabla\delta_te^{n+1}_{\bar{\bm{u}}}\|^2,
\end{eqnarray*}
where we use $\|\nabla\delta_te^n_{\bm{u}}\| \leq C\tau^{\frac{1}{2}} \leq C$. Similarly, we have
\begin{eqnarray*}
	\big(\delta_tJ^{n+1}_3, \delta_te^{n+1}_{\bar{\bm{u}}}\big) &\leq& C\epsilon^{-1}\big(\|\delta_te^n_{c_1}\|^2 + \|\delta_te^n_{c_2}\|^2\big) + \epsilon\|\nabla\delta_te^{n+1}_{\bar{\bm{u}}}\|^2, \\
	\big(\delta_tJ^{n+1}_4, \delta_te^{n+1}_{\bar{\bm{u}}}\big) &\leq& C\epsilon^{-1}\|\delta_tR^{n+1}_{\bm{u}}\|^2_{H^{-1}} + \epsilon\|\nabla\delta_te^{n+1}_{\bar{\bm{u}}}\|^2.
\end{eqnarray*}

From \eqref{hy3.35}, we have
\begin{eqnarray}\label{hy3.74}
	\delta_te^{n+1}_{\bm{u}} + \tau\nabla\delta_t e^{n+1}_p = \delta_te^{n+1}_{\bar{\bm{u}}} + \tau\nabla\delta_t e^n_p + \tau\delta_t R^{n+1}_p.
\end{eqnarray}
Taking the inner product of \eqref{hy3.74} with itself on both sides, we have
\begin{eqnarray}\label{hy3.75}
	\begin{aligned}
	& \|\delta_te^{n+1}_{\bm{u}} \|^2 + \tau^2\|\nabla \delta_te^{n+1}_p\|^2 \\
	=& \|\delta_te^{n+1}_{\bar{\bm{u}}}\|^2 + \tau^2\|\nabla \delta_te^n_p\|^2 + \tau^2\|\delta_tR^{n+1}_p\|^2 + 2\tau(\nabla \delta_te^n_p,\delta_te^{n+1}_{\bar{\bm{u}}})\\
	& + 2\tau(\delta_te^{n+1}_{\bar{\bm{u}}}, \delta_tR^{n+1}_p) + 2\tau^2(\nabla \delta_te^n_p, \delta_tR^{n+1}_p). 
	\end{aligned}
\end{eqnarray}
The terms $\tau(\nabla \delta_te^n_p, \delta_tR^{n+1}_p)$ and $(\delta_te^{n+1}_{\bar{\bm{u}}}, \delta_tR^{n+1}_p)$ can be estimated as follows
\begin{eqnarray*}
	&& \tau(\nabla \delta_te^n_p, \delta_tR^{n+1}_p) \leq C\big(\tau^2\|\nabla\delta_t e^n_p\|^2 + \|\delta_tR^{n+1}_p\|^2\big), \\
	&& (\delta_te^{n+1}_{\bar{\bm{u}}},\delta_tR^{n+1}_p) \leq C\epsilon^{-1}\|\delta_tR^{n+1}_p\|^2_{H^{-1}} + \epsilon\|\nabla \delta_te^{n+1}_{\bar{\bm{u}}}\|^2.
\end{eqnarray*}

Then together \eqref{hy3.73} with \eqref{hy3.75} and choosing $\epsilon = 1/12$, we obtain
\begin{eqnarray}\label{hy3.76}
	\begin{aligned}
	& \frac{1}{2\tau}\big(\|\delta_te^{n+1}_{\bm{u}}\|^2 - \|\delta_te^n_{\bm{u}}\|^2 \big) + \frac{\tau}{2}\big(\|\nabla\delta_te^{n+1}_p\|^2 - \|\nabla\delta_te^n_p\|^2\big) + \frac{1}{2}\|\nabla\delta_te^{n+1}_{\bar{\bm{u}}}\|^2  \\
	\leq& C\big(\|\delta_te^{n+1}_{c_1}\|^2 + \|\delta_te^{n+1}_{c_2}\|^2 + \|\delta_te^{n}_{c_1}\|^2 + \|\delta_te^{n}_{c_2}\|^2 + \|e^{n+1}_{c_1}\|^2 + \|e^{n+1}_{c_2}\|^2 \\
	& + \|e^{n}_{c_1}\|^2 + \|e^{n}_{c_2}\|^2 + \|\delta_te^n_{\bm{u}}\|^2 + \|e^n_{\bm{u}}\|^2 + \tau^2\|\nabla\delta_t e^n_p\|^2 + |\delta_te^{n+1}_r|^2  \\
	&  + |e^{n+1}_r|^2 + |e^n_r|^2 + \|\delta_tR^{n+1}_p\|^2 + \|\delta_tR^{n+1}_{\bm{u}}\|^2\big).
	\end{aligned}
\end{eqnarray}

{\bf Step 4.} From \eqref{hy3.38}, we have
\begin{eqnarray}\label{hy3.77}
	\begin{aligned}
	|\delta_te^{n+1}_r| \leq& \frac{1}{2\sqrt{E(\phi^{n+1})}} \big|\big(\bm{u}^n\cdot\nabla\bm{u}^n + (c^n_1-c^n_2)\nabla\phi^n, e^{n+1}_{\bar{\bm{u}}}\big)\big| + \sum_{i=1}^6|K^{n+1}_i| + |R^{n+1}_r| \\
	\leq& C\big(\|e^{n+1}_{c_1}\| + \|e^{n+1}_{c_2}\| + \|e^{n}_{c_1}\| + \|e^{n}_{c_2}\| + \|\nabla e^{n+1}_{\bar{\bm{u}}}\| + \|e^n_{\bm{u}}\| + |e^{n+1}_r| + |R^{n+1}_r|\big).
	\end{aligned}
\end{eqnarray}
Then together with \eqref{hy3.70}, \eqref{hy3.71}, \eqref{hy3.76} and \eqref{hy3.77}, we have
\begin{eqnarray}\label{hy3.78}
	\begin{aligned}
	& \|\delta_te^{n+1}_{c_1}\|^2 - \|\delta_te^{n}_{c_1}\|^2 + \|\delta_te^{n+1}_{c_2}\|^2 - \|\delta_te^{n}_{c_2}\|^2 + \|\delta_te^{n+1}_{\bm{u}}\|^2 - \|\delta_te^n_{\bm{u}}\|^2\\
	& + \tau^2\big(\|\nabla\delta_te^{n+1}_p\|^2 - \|\nabla\delta_te^n_p\|^2\big) + \tau\|\nabla\delta_te^{n+1}_{c_1}\|^2 + \tau\|\nabla\delta_te^{n+1}_{c_2}\|^2 + \tau\|\nabla\delta_te^{n+1}_{\bar{\bm{u}}}\|^2 \\
	\leq& C\tau\big(\|\delta_te^{n+1}_{c_1}\|^2 + \|\delta_te^{n+1}_{c_2}\|^2 + \|\delta_te^{n}_{c_1}\|^2 + \|\delta_te^{n}_{c_2}\|^2 + \|e^{n+1}_{c_1}\|^2 + \|e^{n+1}_{c_2}\|^2 + \|e^n_{c_1}\|^2    \\
	& + \|e^n_{c_2}\|^2 + \|\nabla e^n_{c_1}\|^2 + \|\nabla e^n_{c_2}\|^2 + \|\delta_te^n_{\bm{u}}\|^2 + \|e^n_{\bm{u}}\|^2 + \|\nabla e^{n+1}_{\bar{\bm{u}}}\|^2 + \tau^2\|\nabla\delta_t e^n_p\|^2  \\
	& + |e^{n+1}_r|^2 + |e^n_r|^2 + \|\delta_tR^{n+1}_{c_1}\|^2 + \|\delta_tR^{n+1}_{c_2}\|^2 + \|\delta_tR^{n+1}_p\|^2 + \|\delta_tR^{n+1}_{\bm{u}}\|^2 + |R^{n+1}_r|^2 \big).
	\end{aligned}
\end{eqnarray}
Summing up \eqref{hy3.78} for $n$ from 0 to $\ell$ with $\ell \leq N-1$, we use \eqref{hy3.10} and \eqref{hy3.11} to obtain 
\begin{eqnarray}\label{hy3.79}
	\begin{aligned}
	& \max_{0 \leq n \leq \ell} \big(\|\delta_te^{n+1}_{c_1}\|^2 + \|\delta_te^{n}_{c_2}\|^2 + \|\delta_te^{n+1}_{\bm{u}}\|^2 + \tau^2\|\nabla\delta_te^{n+1}_p\|^2 \big)  \\
	&\qquad + \tau\sum_{n=0}^{\ell}\big(\|\nabla\delta_te^{n+1}_{c_1}\|^2 + \|\nabla\delta_te^{n+1}_{c_2}\|^2 + \tau\|\nabla\delta_te^{n+1}_{\bar{\bm{u}}}\|^2\big)  \\
	\leq& \|\delta_te^1_{c_1}\|^2 + \|\delta_te^1_{c_2}\|^2 + \|\delta_te^1_{\bm{u}}\|^2 + \tau^2\|\nabla\delta_te^1_p\|^2  \\
	& + C_6\tau\sum_{n=0}^{\ell}\big( \|\delta_te^{n+1}_{c_1}\|^2 + \|\delta_te^{n+1}_{c_2}\|^2 + \|e^{n+1}_{c_1}\|^2 + \|e^{n+1}_{c_2}\|^2 + \|\nabla e^n_{c_1}\|^2 + \|\nabla e^n_{c_2}\|^2  \\
	& + \|\delta_te^n_{\bm{u}}\|^2 + \|e^n_{\bm{u}}\|^2 + \|\nabla e^{n+1}_{\bar{\bm{u}}}\|^2 + \tau^2\|\nabla\delta_t e^n_p\|^2 + |e^{n+1}_r|^2 + \|\delta_tR^{n+1}_{c_1}\|^2  \\
	&  + \|\delta_tR^{n+1}_{c_2}\|^2 + \|\delta_tR^{n+1}_p\|^2 + \|\delta_tR^{n+1}_{\bm{u}}\|^2 + |R^{n+1}_r|^2 \big)  \\
	\leq& C_6\tau\sum_{n=0}^{\ell}\big( \|\delta_te^{n+1}_{c_1}\|^2 + \|\delta_te^{n+1}_{c_2}\|^2 + \|\delta_te^n_{\bm{u}}\|^2 + \tau^2\|\nabla\delta_t e^n_p\|^2 \big) + C\tau^2,
	\end{aligned}
\end{eqnarray}
for $\ell = 0,1,\ldots,N-1$ and for some positive constant $C_6$ independent of $\tau$. Hence, there exists a sufficiently small positive constant $\tau_{**} \leq \tau_*$ satisfying $C_6\tau_{**} \leq 1$ such that when $\tau \leq \tau_{**}$, we apply the discrete Gronwall lemma \ref{le3.1} to obtain
\begin{eqnarray}\label{hy3.80}
	\begin{aligned}
	 \max_{0 \leq n \leq N-1}\big( \|\delta_te^{n+1}_{c_1}\|^2 + \|\delta_te^{n+1}_{c_2}\|^2 + \|\delta_te^{n+1}_{\bm{u}}\|^2 
	+ \tau^2\|\nabla\delta_te^{n+1}_p\|^2\big) \\
	+ \tau\sum_{n=0}^{N-1}\big( \|\nabla\delta_te^{n+1}_{c_1}\|^2 + \|\nabla\delta_te^{n+1}_{c_2}\|^2 + \|\nabla\delta_t e^{n+1}_{\bar{\bm{u}}}\|^2\big) \leq C\tau^2,
	\end{aligned}
\end{eqnarray}
which implies the desired result \eqref{hy3.64}.
\end{proof}

Now, we can prove the optimal error estimate for pressure. Meantime, the $H^1$ error estimate for velocity is also obtained.

\begin{theorem}\label{th3.9}
	Under the assumption \eqref{hy3.9} in Theorem \ref{th3.3} and the conditions in Lemma \ref{le3.8}, we have
	\begin{eqnarray}\label{hy3.81}
		\max_{0 \leq n \leq N-1}\big(\|e^{n+1}_{\bar{\bm{u}}}\|^2_{H^1} + \|e^{n+1}_{\bm{u}}\|^2_{H^1} + \|e^{n+1}_p\|^2\big) \leq C\tau^2.
	\end{eqnarray}
\end{theorem}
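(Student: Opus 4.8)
The plan is to deduce Theorem~\ref{th3.9} from the first-order time-increment estimates of Lemmas~\ref{le3.7}--\ref{le3.8} together with a divergence-stability (inf-sup) argument for the pressure; no new energy estimate is required. First I would upgrade the $\ell^2(H^1)$ bound on $e^{n+1}_{\bar{\bm u}}$ implicit in \eqref{hy3.22} to an $\ell^\infty(H^1)$ bound by telescoping. Since $e^{n+1}_{\bar{\bm u}}=e^1_{\bar{\bm u}}+\tau\sum_{k=1}^n\delta_te^{k+1}_{\bar{\bm u}}$, taking gradients and applying the Cauchy--Schwarz inequality gives
\begin{equation*}
\|\nabla e^{n+1}_{\bar{\bm u}}\|\le\|\nabla e^1_{\bar{\bm u}}\|+(\tau n)^{1/2}\Big(\tau\sum_{k=1}^n\|\nabla\delta_te^{k+1}_{\bar{\bm u}}\|^2\Big)^{1/2}\le C\tau^{3/2}+T^{1/2}C\tau\le C\tau ,
\end{equation*}
where $\|\nabla e^1_{\bar{\bm u}}\|^2\le C\tau^3$ comes from \eqref{hy3.63} and the summed term is controlled by \eqref{hy3.64}; with \eqref{hy3.2} this yields $\max_n\|e^{n+1}_{\bar{\bm u}}\|_{H^1}\le C\tau$. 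For $\bm u^{n+1}$, the correction step \eqref{hy2.8e}--\eqref{hy2.8f} and the uniqueness of the Helmholtz decomposition give $\bm u^{n+1}=\mathbb{P}_H\bar{\bm u}^{n+1}$, while $\bm u(t^{n+1})=\mathbb{P}_H\bm u(t^{n+1})$, so $e^{n+1}_{\bm u}=\mathbb{P}_He^{n+1}_{\bar{\bm u}}$ and the $H^1$-continuity of $\mathbb{P}_H$ gives $\max_n\|e^{n+1}_{\bm u}\|_{H^1}\le C\|e^{n+1}_{\bar{\bm u}}\|_{H^1}\le C\tau$.

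Next I would estimate the pressure. Eliminating $\nabla e^n_p$ between \eqref{hy3.30} and \eqref{hy3.33} yields the identity
\begin{equation*}
\nabla e^{n+1}_p=\Delta e^{n+1}_{\bar{\bm u}}-\delta_te^{n+1}_{\bm u}-\frac{e^{n+1}_r}{\sqrt{E(\phi^{n+1})}}\big(\bm u^n\cdot\nabla\bm u^n+(c^n_1-c^n_2)\nabla\phi^n\big)+\sum_{i=1}^4J^{n+1}_i+R^{n+1}_p .
\end{equation*}
Since $e^{n+1}_p\in L^2_0(\Omega)$, the inf-sup condition on the bounded Lipschitz domain $\Omega$ provides $\bm v\in\bm H^1_0(\Omega)$ with $\nabla\cdot\bm v=e^{n+1}_p$ and $\|\bm v\|_{H^1}\le C\|e^{n+1}_p\|$. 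Pairing the identity above with $\bm v$, integrating by parts in $(\Delta e^{n+1}_{\bar{\bm u}},\bm v)=-(\nabla e^{n+1}_{\bar{\bm u}},\nabla\bm v)$, and using $\|e^{n+1}_p\|^2=(e^{n+1}_p,\nabla\cdot\bm v)=-(\nabla e^{n+1}_p,\bm v)$, one bounds every resulting term by $C\tau\|\bm v\|_{H^1}\le C\tau\|e^{n+1}_p\|$: the first two via the $H^1$ velocity bound just obtained and \eqref{hy3.64}; the auxiliary-variable term via $|e^{n+1}_r|\le C\tau$ together with $\|\bm u^n\|_{H^1}\le C_N$, Lemma~\ref{le3.4} and \eqref{hy3.5}; the $J^{n+1}_i$ terms exactly as in Step~3 of the proof of Lemma~\ref{le3.5}, now paired with $\bm v$ instead of $e^{n+1}_{\bar{\bm u}}$ and using \eqref{hy3.32}, \eqref{hy3.22} and the induction bound \eqref{hy3.12} to extract a factor $\tau$; and the truncation term via $\|R^{n+1}_p\|\le C\tau$. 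Dividing by $\|e^{n+1}_p\|$ gives $\max_n\|e^{n+1}_p\|\le C\tau$, and combining with the velocity bounds proves \eqref{hy3.81}.

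The delicate point is that $\|\nabla e^{n+1}_p\|$ itself is only $O(1)$ by \eqref{hy3.22}, so the pressure gradient cannot be estimated directly; the remedy is to test it against the specific divergence-lifting $\bm v$ and thereby trade the uncontrolled $\Delta e^{n+1}_{\bar{\bm u}}$ for $\nabla e^{n+1}_{\bar{\bm u}}$, which the telescoping argument shows to be $O(\tau)$. Once this is arranged, every remaining term has already been estimated in $L^2$ or $H^{-1}$ with a spare factor $\tau$ in Lemmas~\ref{le3.5}--\ref{le3.8}, so the rest is routine bookkeeping; I do not expect any genuinely new difficulty beyond keeping track of which earlier bound supplies each factor of $\tau$.
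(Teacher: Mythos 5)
Your proposal is correct and follows essentially the same route as the paper: an $\ell^\infty(H^1)$ bound on $e^{n+1}_{\bar{\bm u}}$ obtained by telescoping the time increments controlled in Lemmas~\ref{le3.7}--\ref{le3.8}, the identity $e^{n+1}_{\bm u}=\mathbb{P}_He^{n+1}_{\bar{\bm u}}$ with the $H^1$-continuity of $\mathbb{P}_H$, and the inf-sup condition applied to the equation obtained by adding \eqref{hy3.30} and \eqref{hy3.33}. The only difference is presentational (you telescope the identity $e^{n+1}_{\bar{\bm u}}=e^1_{\bar{\bm u}}+\tau\sum_k\delta_te^{k+1}_{\bar{\bm u}}$ and spell out the divergence-lifting test function, whereas the paper telescopes the norm differences and invokes the inf-sup bound directly), so no substantive gap.
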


\begin{proof}
From \eqref{hy3.64} in Lemma \ref{le3.8} and using the H\"older's inequality, for $0 \leq n \leq \ell$ with $\ell \leq N-1$, we have
\begin{eqnarray*}
	\|\nabla e^{\ell+1}_{\bar{\bm{u}}}\| - \|\nabla e^{1}_{\bar{\bm{u}}}\| = \sum_{n=1}^{\ell}\big(\|\nabla e^{n+1}_{\bar{\bm{u}}}\| - \|\nabla e^{n}_{\bar{\bm{u}}}\|\big) \leq \Big(\tau^2\sum_{n=0}^{\ell} \|\nabla \delta_t e^{n+1}_{\bar{\bm{u}}}\|^2\Big)^{\frac{1}{2}} \Big(\sum_{n=0}^{\ell}1^2\Big)^{\frac{1}{2}} \leq C\tau,
\end{eqnarray*}
for $\ell = 0,1,\ldots,N-1$. Then it follows from 
\begin{eqnarray}\label{hy3.82}
	\max_{0 \leq n \leq N-1}\|e^{n+1}_{\bar{\bm{u}}}\|_{H^1} \leq C\tau + C\max_{0 \leq n \leq N-1}\|\nabla e^{1}_{\bar{\bm{u}}}\| \leq C\tau.
\end{eqnarray}
Thanks to $e^{n+1}_{\bm{u}} = \mathbb{P}_He^{n+1}_{\bar{\bm{u}}}$, we have
\begin{eqnarray}\label{hy3.83}
	\max_{0 \leq n \leq N-1}\|e^{n+1}_{\bm{u}}\|_{H^1} \leq C\max_{0 \leq n \leq N-1}\|\mathbb{P}_H e^{n+1}_{\bar{\bm{u}}}\|_{H^1} \leq C\max_{0 \leq n \leq N-1} \|e^{n+1}_{\bar{\bm{u}}}\|_{H^1} \leq C\tau.
\end{eqnarray}

Adding \eqref{hy3.30} and \eqref{hy3.33}, it leads to 
\begin{eqnarray}\label{hy3.84}
	\delta_te^{n+1}_{\bm{u}} - \Delta e^{n+1}_{\bar{\bm{u}}} + \nabla e^{n+1}_p + \frac{e^{n+1}_r}{\sqrt{E(\phi^{n+1})}} \big(\bm{u}^n\cdot\nabla\bm{u}^n + (c^n_1-c^n_2)\nabla\phi^n\big) = \sum_{i=1}^4J^{n+1}_i + R^{n+1}_p.
\end{eqnarray}
By using the inf-sup condition
$$
C_7\|p\| \leq \sup\limits_{\bm{u} \in \bm{H}^1_0(\Omega)} \frac{(\nabla\cdot\bm{u}, p)}{\|\nabla\bm{u}\|}, \quad \forall p\in L^2_0(\Omega),
$$
for some positive constant $C_7$, we obtain from \eqref{hy3.84} that
\begin{eqnarray}\label{hy3.85}
	\begin{aligned}
	\max_{0 \leq n \leq N-1}\|e^{n+1}_p\| \leq& C\max_{0 \leq n \leq N-1} \big(\|\delta_te^{n+1}_{\bm{u}}\| + \|\nabla e^{n+1}_{\bar{\bm{u}}}\| + |e^{n+1}_r| + \sum_{i=1}^4\|J^{n+1}_i\| + \|R^{n+1}_p\|\big)  \\
	\leq& C\max_{0 \leq n \leq N-1} \big(\|e^{n+1}_{c_1}\| + \|e^{n+1}_{c_2}\| + \|e^{n}_{c_1}\| + \|e^{n}_{c_2}\| + \|e^n_{\bm{u}}\| + \|\delta_te^{n+1}_{\bm{u}}\|  \\
	& + \|\nabla e^{n+1}_{\bar{\bm{u}}}\| + |e^{n+1}_r| + \|R^{n+1}_{\bm{u}}\| + \|R^{n+1}_p\|\big) \\
	\leq& C\tau.
	\end{aligned}
\end{eqnarray}
Combining \eqref{hy3.82}, \eqref{hy3.83} and \eqref{hy3.85}, we obtain the desired result \eqref{hy3.81}.
\end{proof}

\section{Numerical experiments}\label{se4}

In this section, we provide some numerical experiments to validate the proposed scheme \eqref{hy2.8}. The spatial discretizations are based on the finite element method which is very efficient and accurate. The fully discrete finite element scheme for \eqref{hy2.6} reads as follows:

{\bf Step 1.} Find $\phi^0_h \in V_h$ such that for all $\psi_h\in V_h$
\begin{eqnarray}
	\big(\nabla\phi^0_h, \nabla\psi_h\big) = \big(c^0_{1,h} - c^0_{2,h}, \psi_h\big).
\end{eqnarray}

{\bf Step 2.} Find $c^{n+1}_{1,h}, c^{n+1}_{2,h} \in V_h \subset H^1(\Omega)$ such that for all $\theta_h \in V_h$
\begin{eqnarray}
	\label{hy4.1} && \big(\delta_tc^{n+1}_{1,h}, \theta_h\big) + \big(\nabla c^{n+1}_{1,h}, \nabla\theta_h\big) - \big(\bm{u}^n_h c^{n+1}_{1,h}, \nabla\theta_h\big) + \big(c^{n+1}_{1,h}\nabla\phi^n_h, \nabla\theta_h\big) = 0, \\
	\label{hy4.2} && \big(\delta_tc^{n+1}_{2,h}, \theta_h\big) + \big(\nabla c^{n+1}_{2,h}, \nabla\theta_h\big) - \big(\bm{u}^n_h c^{n+1}_{2,h}, \nabla\theta_h\big) - \big(c^{n+1}_{2,h}\nabla\phi^n_h, \nabla\theta_h\big) = 0.
\end{eqnarray}

{\bf Step 3.} Find $\phi^{n+1}_h \in V_h$ such that for all $\psi_h\in V_h$
\begin{eqnarray}\label{hy4.3}
	\big(\nabla\phi^{n+1}_h, \nabla\psi_h\big) = \big(c^{n+1}_{1,h} - c^{n+1}_{2,h}, \psi_h\big).
\end{eqnarray}

{\bf Step 4.} Find $\bar{\bm{u}}^{n+1}_h \in \bm{X}_h \subset \bm{H}^1_0(\Omega)$ such that for all $\bm{v}_h \in \bm{X}_h$
\begin{eqnarray}\label{hy4.4} 
	\begin{aligned}
	\Big(\frac{\bar{\bm{u}}^{n+1}_h-\bm{u}^n_h}{\tau}, \bm{v}_h\Big) +  \big(\nabla\bar{\bm{u}}^{n+1}_h, \nabla\bm{v}_h\big) - \big(p^n_h, \nabla\cdot\bm{v}_h\big) + \frac{r^{n+1}_h}{\sqrt{E(\phi^{n+1}_h)}} \big(\bm{u}^n_h\cdot\nabla\bm{u}^n_h, \bm{v}_h\big)  \\
	+ \frac{r^{n+1}_h}{\sqrt{E(\phi^{n+1}_h)}}\big((c^{n}_{1,h} - c^{n}_{2,h})\nabla\phi^{n}_h, \bm{v}_h\big) = 0,
	\end{aligned}
\end{eqnarray}
and find $r^{n+1} \in \mathbb{R}$ such that
\begin{eqnarray}\label{hy4.5}
	\begin{aligned}
	\delta_tr^{n+1}_h =& \frac{1}{2\sqrt{E(\phi^{n+1}_h)}} \Big((\bm{u}^n_h\cdot\nabla\bm{u}^n_h,\bar{\bm{u}}^{n+1}_h) + (c^{n}_{1,h}-c^{n}_{2,h})\nabla\phi^{n}_h,\bar{\bm{u}}^{n+1}_h)\Big) \\
	& - \frac{1}{2r^{n+1}_h}\Big(\|c^{n+1}_{1,h}-c^{n+1}_{2,h}\|^2 + \int_{\Omega}\big(c^{n+1}_{1,h}+c^{n+1}_{2,h}\big)|\nabla\phi^{n+1}_h|^2d\bm{x}\Big).
	\end{aligned}
\end{eqnarray}

{\bf Step 5.} Find $p^{n+1}_h \in Y_h \subset L^2_0(\Omega)$ such that for all $q_h \in Y_h$
\begin{eqnarray}\label{hy4.6}
	\big(\nabla p^{n+1}_h, \nabla q_h\big) = - \frac{1}{\tau} \big(\nabla\cdot\bar{\bm{u}}^{n+1}_h, q_h\big) + \big(\nabla p^n_h, \nabla q_h\big),
\end{eqnarray}
and update $\bm{u}^{n+1}_h$ from 
\begin{eqnarray}\label{hy4.7}
	\bm{u}^{n+1}_h = \bar{\bm{u}}^{n+1}_h - \tau\nabla p^{n+1}_h + \tau\nabla p^n_h.
\end{eqnarray}

Similar to Theorem \ref{th2.1}, we can directly prove the above fully discrete scheme is also mass preserving and unconditionally energy stable in the senses that
\begin{eqnarray}\label{hy4.8}
	\int_{\Omega}c^{n+1}_{1,h} d\bm{x} = \int_{\Omega}c^{n}_{1,h} d\bm{x}, \quad \int_{\Omega}c^{n+1}_{2,h} d\bm{x} = \int_{\Omega}c^{n}_{2,h} d\bm{x},
\end{eqnarray}
and 
\begin{eqnarray}\label{hy4.9}
	E^{n+1}_h - E^n_h \leq -\tau\|\nabla\bar{\bm{u}}^{n+1}_h\|^2 - \tau\|c^{n+1}_{1,h}-c^{n+1}_{2,h}\|^2 - \tau\int_{\Omega}\big(c^{n+1}_{1,h}+c^{n+1}_{2,h}\big)|\nabla\phi^{n+1}_h|^2d\bm{x},
\end{eqnarray}
where 
$$
E^{n+1}_h = \frac{1}{2}\|\bm{u}^{n+1}_h\|^2 + \frac{\tau^2}{2}\|\nabla p^{n+1}_h\|^2 + |r^{n+1}_h|^2.
$$

In the following numerical experiments, we adopt $\mathcal{P}_1$ element for the ionic concentrations $c_1$, $c_2$ and the electric potential $\phi$ and Taylor-Hood element $(\bm{\mathcal{P}}_2, \mathcal{P}_1)$ for $(\bm{u}, p)$.

{\bf Example 1.} We first consider the following quite artificial exact solutions on the computed domain $\Omega = (-1,1)^2$:
\begin{eqnarray*}
	&& c_1 = 3\cos(\pi x)\cos(\pi y)\sin(t), \quad c_2 = \cos(\pi x)\cos(\pi y)\sin(t), \\
	&& \phi = \frac{1}{\pi^2}\cos(\pi x)\cos(\pi y)\sin(t), \quad p = \sin(2\pi x)\sin(2\pi y)\sin(t), \\
	&& \bm{u} = \big(\sin(2\pi x)\cos(2\pi y)\sin(t), -\sin(2\pi y)\cos(2\pi x)\sin(t)\big)^{\top},
\end{eqnarray*}
where the ionic concentrations $c_1$ and $c_2$ are even negative. The suitable source terms are added in the NSPNP equations \eqref{hy1.1} for the above exact solutions. We set $C_0 = 10$ and fix the spatial size $h=0.05$. The numerical results of errors and convergence rates at $t=1$ are listed in Tables \ref{tab1}-\ref{tab2}. We observe that all the error values achieve the first-order temporal convergence rates.

\begin{table}
	\centering
	\caption{Errors and convergence rates in the $L^2$-norm at $t=1$ for Example 1.}\label{tab1}
	\begin{tabular}{c c c c c c c c c c c}
		\hline
		$\tau$ & $\|e_{c_1}\|$ & Rate & $\|e_{c_2}\|$ & Rate & $\|e_{\phi}\|$ & Rate & $\|e_{\bm{u}}\|$ & Rate & $\|e_p\|$ & Rate \\ \hline 
		$\frac{1}{10}$     & 1.76E-1 & $-$  & 5.89E-2 & $-$  & 6.30E-3 & $-$  & 8.51E-2 & $-$  & 5.28E-2 & $-$ \\
		$\frac{1}{20}$    & 8.49E-2 & 1.05 & 2.85E-2 & 1.03 & 3.16E-3 & 1.00 & 4.10E-2 & 1.05 & 2.04E-2 & 1.34 \\
		$\frac{1}{40}$   & 4.17E-2 & 1.03 & 1.40E-2 & 1.03 & 1.65E-3 & 0.94 & 1.98E-2 & 1.05 & 9.04E-3 & 1.17 \\
		$\frac{1}{80}$  & 2.06E-2 & 1.01 & 6.94E-3 & 1.01 & 9.13E-4 & 0.85 & 9.74E-3 & 1.02 & 4.11E-3 & 1.14 \\
		\hline
	\end{tabular}
\end{table}

\begin{table}
	\centering
	\caption{Errors and convergence rates in the $H^1$-norm at $t=1$ for Example 1.}\label{tab2}
	\begin{tabular}{c c c c c c c c c}
		\hline
		$\tau$ & $\|e_{c_1}\|_{H^1}$ & Rate & $\|e_{c_2}\|_{H^1}$ & Rate & $\|e_{\phi}\|_{H^1}$ & Rate & $\|e_{\bm{u}}\|_{H^1}$ & Rate \\ \hline 
		$\frac{1}{10}$     & 8.09E-2 & $-$  & 2.71E-2 & $-$  & 2.85E-3 & $-$  & 9.23E-2 & $-$  \\
		$\frac{1}{20}$    & 4.00E-2 & 1.02 & 1.34E-2 & 1.02 & 1.45E-3 & 0.98 & 4.09E-2 & 1.18 \\
		$\frac{1}{40}$   & 2.13E-2 & 0.91 & 7.13E-3 & 0.91 & 7.94E-4 & 0.87 & 1.81E-2 & 1.17 \\
		$\frac{1}{80}$  & 9.32E-3 & 1.19 & 3.43E-3 & 1.05 & 4.05E-4 & 0.97 & 8.81E-3 & 1.04 \\
		\hline
	\end{tabular}
\end{table}

{\bf Example 2.} We consider the following quite artificial exact solutions on the computed domain $\Omega = (-1,1)^2$:
\begin{eqnarray*}
	&& c_1 = 1.1 + \cos(\pi x)\cos(\pi y)\sin^2(t), \quad c_2 = 1.1 - \cos(\pi x)\cos(\pi y)\sin^2(t), \\
	&& \phi = \frac{1}{\pi^2}\cos(\pi x)\cos(\pi y)\sin^2(t), \quad p = \sin(2\pi x)\sin(2\pi y)\sin^2(t), \\
	&& \bm{u} = \big(\pi\sin(2\pi x)\cos(2\pi y)\sin^2(t), -\pi\sin(2\pi y)\cos(2\pi x)\sin^2(t)\big)^{\top},
\end{eqnarray*}
where the ionic concentrations $c_1$ and $c_2$ are positive. The suitable source terms are added in the NSPNP equations \eqref{hy1.1} for the above exact solutions.  We set $C_0 = 10$ and fix the spatial size $h=0.025$. The numerical results of errors and convergence rates at $t=0.1$ are listed in Tables \ref{tab3}-\ref{tab4}. We observe that the results are consistent with the error estimates in Theorems \ref{th3.3} and \ref{th3.9}.

\begin{table}
	\centering
	\caption{Errors and convergence rates in the $L^2$-norm at $t=0.1$ for Example 2.}\label{tab3}
	\begin{tabular}{c c c c c c c c c c c}
		\hline
		$\tau$ & $\|e_{c_1}\|$ & Rate & $\|e_{c_2}\|$ & Rate & $\|e_{\phi}\|$ & Rate & $\|e_{\bm{u}}\|$ & Rate & $\|e_p\|$ & Rate \\ \hline 
		$\frac{1}{100}$     & 1.67E-3 & $-$  & 1.67E-3 & $-$  & 1.79E-4 & $-$  & 6.52E-3 & $-$  & 2.70E-3 & $-$ \\
		$\frac{1}{200}$    & 8.60E-4 & 0.95 & 8.60E-3 & 0.95 & 9.23E-5 & 0.95 & 3.35E-3 & 0.96 & 1.26E-3 & 1.10 \\
		$\frac{1}{400}$   & 4.42E-4 & 0.96 & 4.42E-4 & 0.96 & 4.75E-5 & 0.96 & 1.70E-3 & 0.98 & 5.75E-4 & 1.13 \\
		$\frac{1}{800}$  & 2.29E-4 & 0.95 & 2.29E-4 & 0.95 & 2.48E-5 & 0.94 & 8.56E-4 & 0.99 & 2.62E-4 & 1.14 \\
		\hline
	\end{tabular}
\end{table}

\begin{table}
	\centering
	\caption{Errors and convergence rates in the $H^1$-norm at $t=0.1$ for Example 2.}\label{tab4}
	\begin{tabular}{c c c c c c c c c}
		\hline
		$\tau$ & $\|e_{c_1}\|_{H^1}$ & Rate & $\|e_{c_2}\|_{H^1}$ & Rate & $\|e_{\phi}\|_{H^1}$ & Rate & $\|e_{\bm{u}}\|_{H^1}$ & Rate \\ \hline 
		$\frac{1}{100}$     & 7.65E-3 & $-$  & 7.65E-3 & $-$  & 7.90E-4 & $-$  & 4.81E-2 & $-$  \\
		$\frac{1}{200}$    & 3.96E-3 & 0.95 & 3.96E-3 & 0.95 & 4.11E-4 & 0.94 & 2.47E-2 & 0.96 \\
		$\frac{1}{400}$   & 2.06E-3 & 0.95 & 2.06E-3 & 0.95 & 2.16E-4 & 0.93 & 1.25E-2 & 0.98 \\
		$\frac{1}{800}$  & 1.11E-3 & 0.89 & 1.11E-3 & 0.89 & 1.18E-4 & 0.87 & 6.31E-3 & 0.99 \\
		\hline
	\end{tabular}
\end{table}

{\bf Example 3.} We verify the properties of mass conservation and non-negativity of the ionic concentrations and energy stability. We consider the following initial conditions on the computed domain $\Omega = (0,1)^2$:
\begin{eqnarray*}
	&& c_1(\bm{x},0) = \cos(\pi x) + 1, \quad c_2(\bm{x},0) = \cos(\pi y) + 1, \\
	&& \bm{u}(\bm{x},0) = (\pi\sin(\pi x)\cos(\pi y), -\pi\sin(\pi y)\cos(\pi x))^{\top},
\end{eqnarray*}
where the initial ionic concentrations are set non-negative. We set $C_0 = 5$ and fix the spatial size $h=0.01$. Figure \ref{fig_energy} plots the time evolutions of the discrete energy \eqref{hy4.9} and original energy \eqref{hy1.6} with different time-steps $\tau=0.1$, 0.05, 0.01 and 0.005. We observe that all energy curves decay monotonically for all time-steps, which confirms that the proposed scheme is unconditionally energy stable and the original energy is dissipative. Furthermore, Figure \ref{fig_mass} plots the time evolutions of masses $\int_{\Omega}c_1 d\bm{x}$, $\int_{\Omega}c_2d\bm{x}$ and their errors with $\tau=0.005$. Figure \ref{fig_c1c2} plots the time evolutions of of maximum and minimum values of $c_1$, $c_2$ with $\tau=0.005$.  It is observed that the proposed scheme preserves the properties of mass conservation and positivity during the time evolution.

\begin{figure}
	\centering
	\subfloat[Discrete energy \eqref{hy4.9}]{
		\includegraphics[scale=0.5]{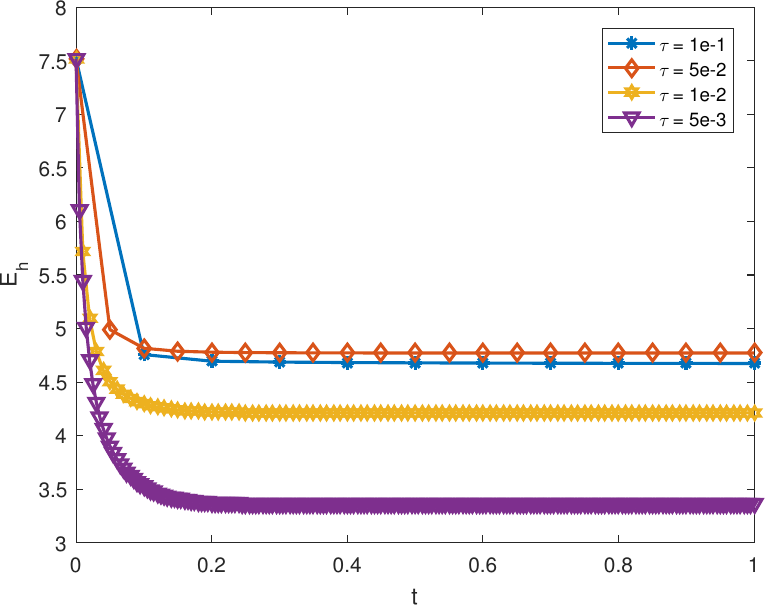} 
	}
	\subfloat[Original energy \eqref{hy1.6}]{
		\includegraphics[scale=0.5]{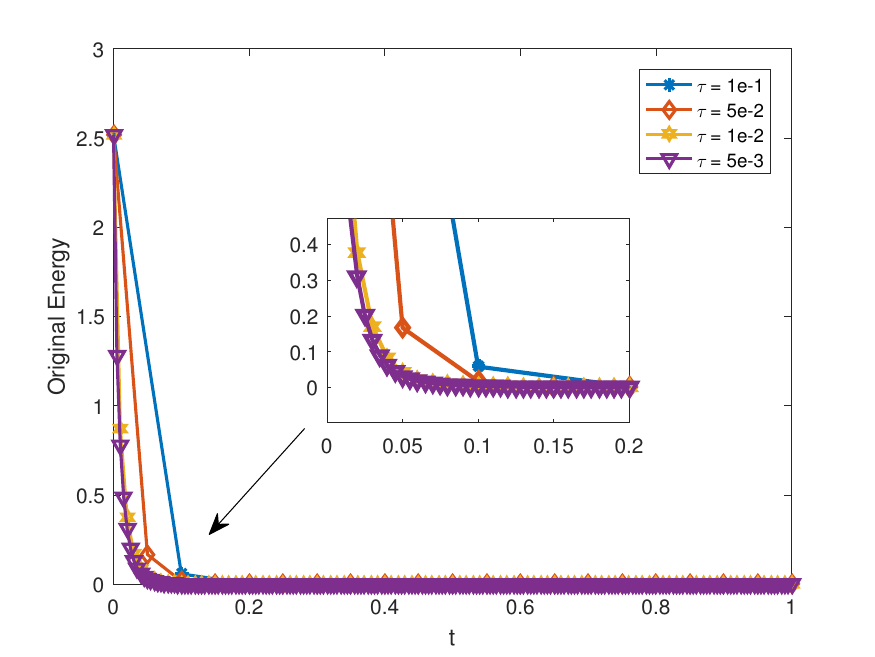}
	}\\
	\caption{Time evolutions of the discrete energy and original energy for Example 3.} \label{fig_energy}
\end{figure}

\begin{figure}
	\centering
		\includegraphics[scale=0.5]{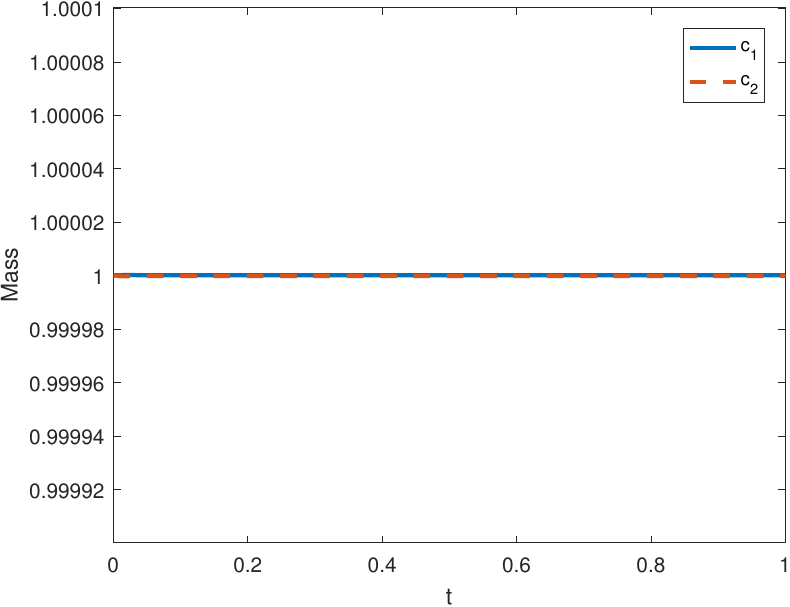} 
		\includegraphics[scale=0.5]{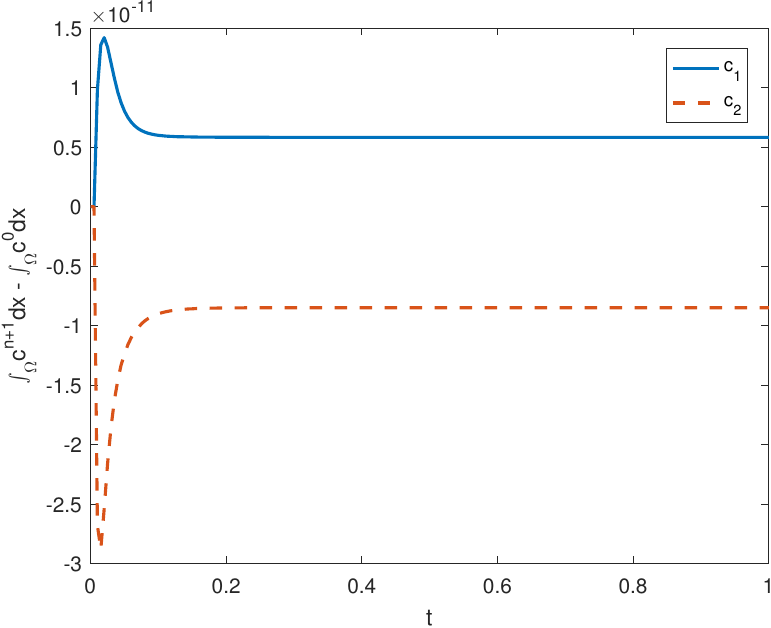}\\
	\caption{Time evolutions of masses $\int_{\Omega}c_1d\bm{x}$, $\int_{\Omega}c_2d\bm{x}$ and their errors with $\tau=0.005$ for Example 3.} \label{fig_mass}
\end{figure}

\begin{figure}
	\centering
	\includegraphics[scale=0.43]{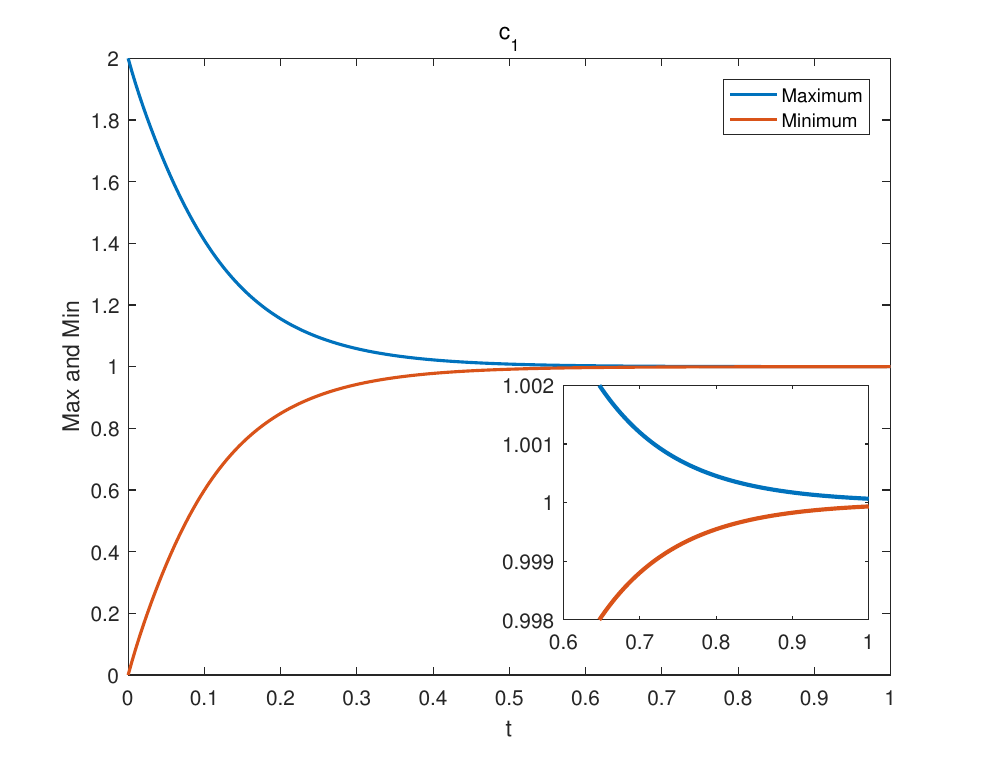} 
	\includegraphics[scale=0.43]{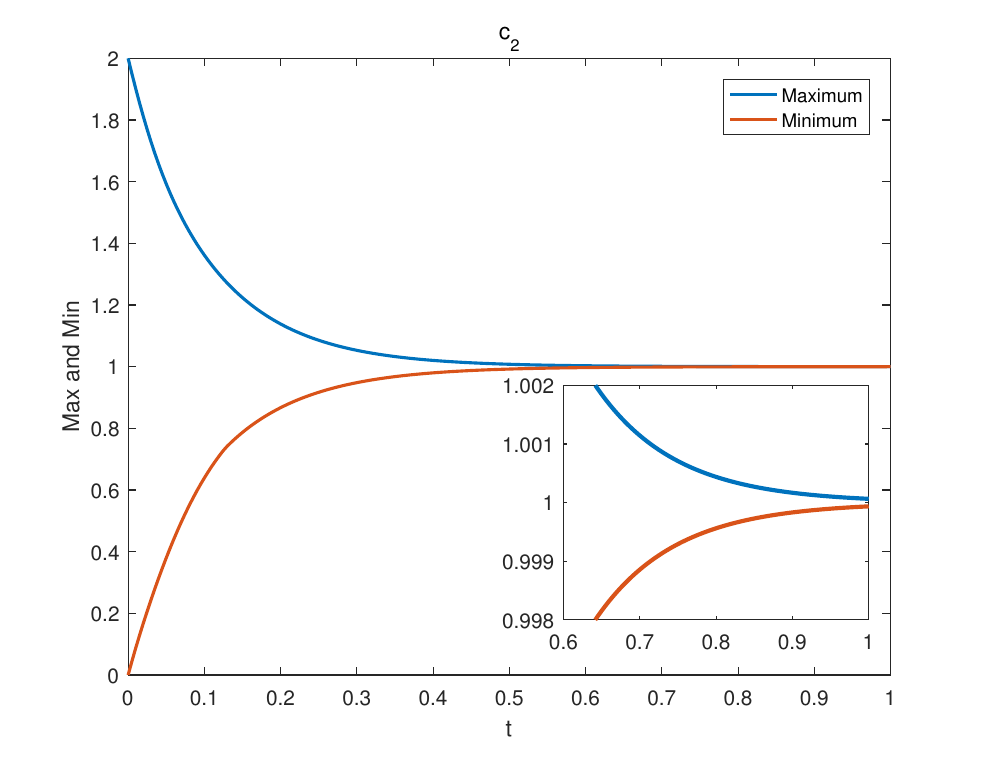}\\
	\caption{Time evolutions of maximum and minimum values of $c_1$, $c_2$ with $\tau=0.005$ for Example 3.} \label{fig_c1c2}
\end{figure}

\section{Concluding remarks}\label{se5}

In this paper, we constructed first-order fully-decoupled and linearized pressure-correction projection scheme for the NSPNP equations and derived that the proposed scheme is unconditionally energy stable, and preserves non-negativity and mass conservation of the ionic concentration solutions. 

We introduced a suitable SAV $r(t)$, which derived from the ionic concentration equations, to treat explicitly the nonlinear terms in the Navier-Stokes part. This allows the nonlinear contributions to the energy to cancel each other out in the continuous case, overcoming the stability condition $\tau \leq \|c^{n+1}_1 + c^{n+1}_2\|^{-1}_{L^{\infty}}$ caused by the temporal discretization \cite{liu2017efficient} of the ionic concentration equations and leading to unconditional energy stability.

We only carry out the rigorous error analysis for the proposed first-order semi-discrete scheme in the two-dimensional case by using mathematical induction on the $L^{\infty}$ bounds for $c_1,c_2$ and $H^1$ bound for $\bm{u}$, which are not available through energy stability. Our analysis essentially uses some inequalities that are valid only in two dimensional case, leading to the fact that they cannot be easily extended to the three dimensional case. We shall leave the error estimates in the three dimensional case for future work.

\bibliographystyle{siamplain}
\bibliography{bib_NSPNP}

\begin{thebibliography}{10}

\bibitem{bauer2012stabilized}
{\sc G.~Bauer, V.~Gravemeier, and W.~A. Wall}, {\em {A stabilized finite
  element method for the numerical simulation of multi-ion transport in
  electrochemical systems}}, Comput. Methods Appl. Mech. Engrg., 223 (2012),
  pp.~199--210.

\bibitem{chen1998second}
{\sc Y.-Z. Chen and L.-C. Wu}, {\em {Second order elliptic equations and
  elliptic systems}}, vol.~174, American Mathematical Society, 1998.

\bibitem{dehghan2023optimal}
{\sc M.~Dehghan, Z.~Gharibi, and R.~Ruiz-Baier}, {\em {Optimal error estimates
  of coupled and divergence-free virtual element methods for the
  Poisson--Nernst--Planck/Navier--Stokes equations and applications in
  electrochemical systems}}, J. Sci. Comput., 94 (2023), p.~72.

\bibitem{deng2011well}
{\sc C.~Deng, J.~Zhao, and S.~Cui}, {\em {Well-posedness for the
  Navier--Stokes--Nernst--Planck--Poisson system in Triebel--Lizorkin space and
  Besov space with negative indices}}, J. Math. Anal. Appl., 377 (2011),
  pp.~392--405.

\bibitem{dong2024positivity}
{\sc L.~Dong, D.~He, Y.~Qin, and Z.~Zhang}, {\em {A positivity-preserving,
  linear, energy stable and convergent numerical scheme for the
  Poisson--Nernst--Planck (PNP) system}}, J. Comput. Appl. Math., 444 (2024),
  p.~115784.

\bibitem{fu2022high}
{\sc G.~Fu and Z.~Xu}, {\em {High-order space--time finite element methods for
  the Poisson--Nernst--Planck equations: Positivity and unconditional energy
  stability}}, Comput. Methods Appl. Mech. Engrg., 395 (2022), p.~115031.

\bibitem{gao2017linearized}
{\sc H.~Gao and D.~He}, {\em {Linearized conservative finite element methods
  for the Nernst--Planck--Poisson equations}}, J. Sci. Comput., 72 (2017),
  pp.~1269--1289.

\bibitem{he2019positivity}
{\sc D.~He, K.~Pan, and X.~Yue}, {\em {A positivity preserving and free energy
  dissipative difference scheme for the Poisson--Nernst--Planck system}}, J.
  Sci. Comput., 81 (2019), pp.~436--458.

\bibitem{he2021mixed}
{\sc M.~He and P.~Sun}, {\em {Mixed finite element method for modified
  Poisson--Nernst--Planck/Navier--Stokes equations}}, J. Sci. Comput., 87
  (2021), pp.~1--33.

\bibitem{hu2020fully}
{\sc J.~Hu and X.~Huang}, {\em {A fully discrete positivity-preserving and
  energy-dissipative finite difference scheme for Poisson--Nernst--Planck
  equations}}, Numer. Math., 145 (2020), pp.~77--115.

\bibitem{li2024error}
{\sc M.~Li and Z.~Li}, {\em {Error estimates for the finite element method of
  the Navier-Stokes-Poisson-Nernst-Planck equations}}, Appl. Numer. Math., 197
  (2024), pp.~186--209.

\bibitem{liu2021positivity}
{\sc C.~Liu, C.~Wang, S.~Wise, X.~Yue, and S.~Zhou}, {\em {A
  positivity-preserving, energy stable and convergent numerical scheme for the
  Poisson-Nernst-Planck system}}, Math. Comput., 90 (2021), pp.~2071--2106.

\bibitem{liu2023second}
{\sc C.~Liu, C.~Wang, S.~M. Wise, X.~Yue, and S.~Zhou}, {\em {A Second Order
  Accurate, Positivity Preserving Numerical Method for the
  Poisson--Nernst--Planck System and Its Convergence Analysis}}, J. Sci.
  Comput., 97 (2023), p.~23.

\bibitem{liu2017efficient}
{\sc X.~Liu and C.~Xu}, {\em {Efficient time-stepping/spectral methods for the
  Navier-Stokes-Nernst-Planck-Poisson equations}}, Commun. Comput. Phys., 21
  (2017), pp.~1408--1428.

\bibitem{metti2016energetically}
{\sc M.~S. Metti, J.~Xu, and C.~Liu}, {\em {Energetically stable
  discretizations for charge transport and electrokinetic models}}, J. Comput.
  Phys., 306 (2016), pp.~1--18.

\bibitem{pan2024linear}
{\sc M.~Pan, S.~Liu, W.~Zhu, F.~Jiao, and D.~He}, {\em {A linear, second-order
  accurate, positivity-preserving and unconditionally energy stable scheme for
  the Navier--Stokes--Poisson--Nernst--Planck system}}, Commun. Nonlinear Sci.
  Numer. Simul., 131 (2024), p.~107873.

\bibitem{prohl2010convergent}
{\sc A.~Prohl and M.~Schmuck}, {\em {Convergent finite element discretizations
  of the Navier-Stokes-Nernst-Planck-Poisson system}}, ESAIM: Math. Model.
  Numer. Anal., 44 (2010), pp.~531--571.

\bibitem{schmuck2009analysis}
{\sc M.~Schmuck}, {\em {Analysis of the Navier--Stokes--Nernst--Planck--Poisson
  system}}, Math. Model Meth. Appl. Sci., 19 (2009), pp.~993--1014.

\bibitem{shen2011spectral}
{\sc J.~Shen, T.~Tang, and L.-L. Wang}, {\em {Spectral methods: algorithms,
  analysis and applications}}, vol.~41, Springer Science \& Business Media,
  2011.

\bibitem{shen2021unconditionally}
{\sc J.~Shen and J.~Xu}, {\em {Unconditionally positivity preserving and energy
  dissipative schemes for Poisson--Nernst--Planck equations}}, Numer. Math.,
  148 (2021), pp.~671--697.

\bibitem{shen2022stability}
{\sc R.~Shen and Y.~Wang}, {\em {Stability of the nonconstant stationary
  solution to the Poisson--Nernst--Planck--Navier--Stokes equations}},
  Nonlinear Anal. Real World Appl., 67 (2022), p.~103582.

\bibitem{temam1995navier}
{\sc R.~Temam}, {\em {Navier--Stokes equations and nonlinear functional
  analysis}}, vol.~66, Society for Industrial and Applied Mathematics, 1995.

\bibitem{temam2001navier}
{\sc R.~Temam}, {\em {Navier--Stokes equations: theory and numerical
  analysis}}, vol.~343, American Mathematical Society, 2001.

\bibitem{wang2019quasi}
{\sc S.~Wang, L.~Jiang, and C.~Liu}, {\em {Quasi-neutral limit and the boundary
  layer problem of Planck-Nernst-Poisson-Navier-Stokes equations for
  electro-hydrodynamics}}, J. Differ. Equ., 267 (2019), pp.~3475--3523.

\bibitem{wang2016generalized}
{\sc Y.~Wang, C.~Liu, and Z.~Tan}, {\em {A generalized
  Poisson--Nernst--Planck--Navier--Stokes model on the fluid with the crowded
  charged particles: Derivation and its well-posedness}}, SIAM J. Math. Anal.,
  48 (2016), pp.~3191--3235.

\bibitem{yu2023positivity}
{\sc Z.~Yu, Q.~Cheng, J.~Shen, and C.~Wang}, {\em {A positivity preserving
  scheme for Poisson-Nernst-Planck Navier-Stokes equations and its error
  analysis}}, arXiv preprint arXiv:2311.17349,  (2023).

\bibitem{zhou2023efficient}
{\sc X.~Zhou and C.~Xu}, {\em {Efficient time-stepping schemes for the
  Navier-Stokes-Nernst-Planck-Poisson equations}}, Comput. Phys. Commun., 289
  (2023), p.~108763.

\end{thebibliography}
\end{document}